\newtheorem{theorem}{Theorem}[section]
\newtheorem{lemma}[theorem]{Lemma}
\newtheorem{corollary}[theorem]{Corollary}
\newtheorem{proposition}[theorem]{Proposition}
\newtheorem{definition}[theorem]{Definition}
\newtheorem{conjecture}[theorem]{Conjecture}
\newtheorem{remark}[theorem]{Remark}
\newcommand{\filledbox}{\leavevmode
  \hbox to.77778em{%
  \hfil\vbox to.675em{\hrule width.6em height.6em}\hfil}}
\newcommand{\Rm}{{\mathbb R}}
\newcommand{\eps}{\varepsilon}
\begin{document}
\tabulinesep=1.0mm
\title{Energy Distribution of Radial Solutions to Energy Subcritical Wave Equation with an Application on Scattering Theory\footnote{MSC classes: 35L71, 35L05; This work is supported by National Natural Science Foundation of China Programs 11601374, 11771325}}

\author{Ruipeng Shen\\
Centre for Applied Mathematics\\
Tianjin University\\
Tianjin, China}

\maketitle

\begin{abstract}
  The topic of this paper is a semi-linear, energy sub-critical, defocusing wave equation $\partial_t^2 u - \Delta u = - |u|^{p -1} u$ in the 3-dimensional space 
 ($3\leq p<5$) whose initial data are radial and come with a finite energy. We split the energy into inward and outward energies, then apply energy flux formula to obtain the following asymptotic distribution of energy: Unless the solution scatters, its energy can be divided into two parts: ``scattering energy'' which concentrates around the light cone $|x|=|t|$ and moves to infinity at the light speed and ``retarded energy'' which is at a distance of at least $|t|^\beta$ behind when $|t|$ is large. Here $\beta$ is an arbitrary constant smaller than $\beta_0(p) = \frac{2(p-2)}{p+1}$. A combination of this property with a more detailed version of the classic Morawetz estimate gives a scattering result under a weaker assumption on initial data $(u_0,u_1)$ than previously known results. More precisely, we assume  
 \[
  \int_{\Rm^3} (|x|^\kappa+1)\left(\frac{1}{2}|\nabla u_0|^2 + \frac{1}{2}|u_1|^2+\frac{1}{p+1}|u|^{p+1}\right) dx < +\infty.
 \]
Here $\kappa>\kappa_0(p) =1-\beta_0(p) = \frac{5-p}{p+1}$ is a constant. This condition is so weak that the initial data may be outside the critical Sobolev space of this equation. This phenomenon is not covered by previously known scattering theory, as far as the author knows.
\end{abstract}

\section{Introduction}

\subsection{Background}

In this work we consider the Cauchy problem of the defocusing semi-linear wave equation 
\[
 \left\{\begin{array}{ll} \partial_t^2 u - \Delta u = - |u|^{p-1}u, & (x,t) \in \Rm^3 \times \Rm; \\
 u(\cdot, 0) = u_0; & \\
 u_t (\cdot,0) = u_1. & \end{array}\right.\quad (CP1)
\]
If $u$ is a solution as above and $\lambda$ is a positive constant, then the function $u_\lambda = \lambda^{-2/(p-1)} u(x/\lambda,t/\lambda)$ is another solution to (CP1) with initial data 
\begin{align*}
& u_\lambda(\cdot,0) = \lambda^{-\frac{2}{p-1}} u_0(\cdot/\lambda);& &\partial_t u_\lambda(\cdot,0) = \lambda^{-\frac{2}{p-1}-1} u_1(\cdot/\lambda).&
\end{align*}
Both pairs of initial data share the same $\dot{H}^{s_p}\times \dot{H}^{s_p-1}(\Rm^3)$ norm if we choose $s_p = 3/2-2/(p-1)$. As a result, this Sobolev space is called the critical Sobolev space of this equation. It has been proved that this problem is locally well-posed for any initial data in this critical Sobolev space. Please see \cite{ls}, for instance, for more details. There is also an energy conservation law for suitable solutions:
\[
 E(u, u_t) = \int_{\Rm^3} \left(\frac{1}{2}|\nabla u(\cdot, t)|^2 +\frac{1}{2}|u_t(\cdot, t)|^2 + \frac{1}{p+1}|u(\cdot,t)|^{p+1}\right)\,dx = \hbox{Const}.
\]
The question about global behaviour of solutions is more difficult. In early 1990's M. Grillakis \cite{mg1} gave a satisfying answer in the energy critical case $p=5$: Any solution with initial data in the critical space $\dot{H}^1 \times L^2(\Rm^3)$ must scatter in both two time directions. In other words, the asymptotic behaviour of any solution mentioned above resembles that of a free wave. We expect that a similar result holds for other exponent $p$ as well.
\begin{conjecture}
 Any solution to (CP1) with initial data $(u_0,u_1) \in \dot{H}^{s_p} \times \dot{H}^{s_p-1}$ must exist for all time $t \in \Rm$ and scatter in both two time directions.
\end{conjecture}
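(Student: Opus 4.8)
The plan is to attack the conjecture through the concentration--compactness and rigidity scheme of Kenig and Merle, specialized to the defocusing, energy-subcritical regime, and---because the tools developed in this paper are radial---to treat the radial case first as the natural stepping stone. The structural obstruction to any direct argument is visible from the scaling: for $3 \le p < 5$ one has $s_p = 3/2 - 2/(p-1) \in [1/2,1)$, so the critical space $\dot H^{s_p}\times\dot H^{s_p-1}$ lies strictly below the energy space $\dot H^1\times L^2$. The conserved energy $E(u,u_t)$ is thus a \emph{supercritical} quantity, and is not even finite for general data in the critical space. Hence there is no coercive conserved quantity at the scaling-critical regularity, which is exactly why the statement is only a conjecture. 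The whole proposal is organized around manufacturing a substitute for this missing conservation law.

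First I would record the small-data theory: by Strichartz estimates and the fixed-point argument underlying the local well-posedness cited above (see \cite{ls}), there is $\delta_0 > 0$ so that data with $\|(u_0,u_1)\|_{\dot H^{s_p}\times\dot H^{s_p-1}} < \delta_0$ launch global solutions scattering in both time directions, together with the long-time perturbation lemma needed later. Next, assuming the conjecture fails, I would run the standard threshold argument: set
\[
 A_c = \sup\{A \ge 0 : \text{every radial solution with } \|(u,u_t)\|_{L_t^\infty \dot H^{s_p}\times\dot H^{s_p-1}} \le A \text{ on its lifespan scatters}\},
\]
observe $A_c > 0$ by small data, and suppose $A_c < \infty$. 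A linear and nonlinear profile decomposition for radial data sequences with critical norm tending to $A_c$ then yields a nonzero minimal non-scattering \emph{critical element} $u_c$ whose trajectory $K = \{(u_c,\partial_t u_c)(t) : t \in \Rm\}$ is precompact in $\dot H^{s_p}\times\dot H^{s_p-1}$ modulo scaling; equivalently $u_c$ is almost periodic with a scaling function $N(t)$. Radiality is what makes this tractable: it removes the space-translation parameter, leaving a single modulation to control.

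The decisive step is to rule out $u_c$, and here I would import this paper's machinery---the inward/outward energy splitting, the energy-flux identity, and the refined Morawetz estimate. These are finite-energy statements, whereas $u_c$ a priori lives only in the critical space, so the linchpin---and the step I expect to be the genuine obstacle---is to \emph{upgrade the regularity of the critical element to finite energy}. Using the precompactness of $K$, the subcriticality $p<5$, and a double-Duhamel / iterated-Strichartz bootstrap, one aims to prove $(u_c,\partial_t u_c)(t)\in \dot H^1\times L^2$ uniformly in $t$, so that $E(u_c,\partial_t u_c)$ is finite and conserved. Once this is in hand, the refined Morawetz estimate forces the Morawetz integral over $\Rm$ to be finite, while almost-periodicity forces a strictly positive lower bound on the integrand over each unit time interval; reconciling these through the admissible behaviour of $N(t)$ yields a contradiction, so $u_c\equiv 0$ and $A_c=\infty$.

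The two remaining---and hardest---gaps are exactly the places where the supercriticality of the energy bites. First, even with $A_c=\infty$, the conclusion ``every solution with bounded critical norm scatters'' is only conditional; upgrading it to the unconditional statement of the conjecture requires ruling out critical-norm blow-up, i.e.\ an a priori bound $\|(u,u_t)\|_{L_t^\infty\dot H^{s_p}\times\dot H^{s_p-1}}<\infty$ for which no conservation law is available. I would attempt this by showing that any critical-norm concentration must be of approximately self-similar type and then using the energy-distribution dichotomy of this paper---\emph{scattering energy} near the light cone $|x|=|t|$ versus \emph{retarded energy} lagging by $|t|^\beta$---to preclude such concentration. Second, the regularity upgrade in the rigidity step is most delicate for $p$ near $3$, where $s_p$ is smallest and the gap to the energy space is largest; the Hölder (rather than Lipschitz) character of the nonlinearity is harmless here since $p\ge 3$, but the low-regularity Strichartz bookkeeping in the profile decomposition and stability theory still demands care. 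Removing the radial restriction afterward would reintroduce the translation parameter and require replacing the radial energy-flux tools by their non-radial analogues, which I regard as a separate and substantial undertaking.
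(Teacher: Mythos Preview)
The statement you are attempting to prove is labeled \emph{Conjecture} in the paper and is explicitly declared open: immediately after stating it the author writes ``This is still an open problem, although we do have progress in two different aspects.'' The paper contains no proof of it; its main results (Theorems \ref{main 1} and \ref{main 2}) are partial progress under additional hypotheses (radiality plus finite energy, and a weighted-energy decay condition, respectively). So there is no ``paper's own proof'' to compare against.

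Your proposal is a reasonable research outline of the Kenig--Merle scheme, and you are candid that it is not a proof: the two items you flag as ``remaining---and hardest---gaps'' are exactly the obstructions that keep the conjecture open. In particular, the conditional results you allude to (scattering assuming a uniform critical-norm bound) are already established in the literature cited in the paper (\cite{km}, \cite{shen2}, \cite{cubic3dwave}, \cite{kv2}), so the concentration-compactness/rigidity portion of your plan, including the regularity upgrade of the critical element, is not new and does not by itself advance the conjecture. The decisive missing ingredient is the unconditional a priori bound $\sup_t \|(u,u_t)\|_{\dot H^{s_p}\times\dot H^{s_p-1}}<\infty$ for arbitrary critical data, and your suggestion to rule out norm concentration via the scattering/retarded energy dichotomy of this paper does not work as stated: that dichotomy is proved only for \emph{finite-energy} radial solutions, whereas a putative blow-up solution at the critical regularity need not have finite energy, so the machinery is unavailable precisely where you need it. Until that circularity is broken, the proposal remains a plan rather than a proof.
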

\noindent This is still an open problem, although we do have progress in two different aspects:
\paragraph{Scattering result with a priori estimates} It has been proved that if a radial solution $u$ with a maximal lifespan $I$ satisfies an a priori estimate
 \begin{equation}
  \sup_{t \in I} \left\|(u(\cdot,t), u_t(\cdot, t))\right\|_{\dot{H}^{s_p} \times \dot{H}^{s_p-1} (\Rm^3)} < +\infty, \label{uniform UB}
 \end{equation}
 then $u$ is defined for all time $t$ and scatters. The proof uses a compactness-rigidity argument. The compactness part is nowadays a standard procedure in the study of wave and Sch\"{o}dinger equations; while the rigidity part does depend on specific situations. In fact, different methods were used for different range of $p$'s. The details can be found in Kenig-Merle \cite{km} for $p>5$, Shen \cite{shen2} for $3<p<5$ and Dodson-Lawrie \cite{cubic3dwave} for $1+\sqrt{2}<p\leq 3$. The author would also like to mention that the same result still holds in the non-radial case in the energy supercritical case $p>5$, as shown in the paper \cite{kv2}. Finally please pay attention that \eqref{uniform UB} is automatically true in the energy critical case $p=5$, as long as initial data are contained in the critical Sobolev space $\dot{H}^1 \times L^2$, thanks to the energy conservation law. 
 \paragraph{Strong Assumptions on Initial Data} There is also multiple scattering results if we assume that the initial data satisfy stronger regularity and/or decay conditions. These results are usually proved via a suitable global space-time integral estimate.
\begin{itemize}
 \item In the energy sub-critical case $3\leq p < 5$, the solutions always scatter if initial data satisfy an additional regularity-decay condition
  \begin{equation} \label{condition1}
    \int_{\Rm^3} \left[(|x|^2+1) (|\nabla u_0 (x)|^2 + |u_1(x)|^2) + |u_0(x)|^2 \right] dx < \infty.
  \end{equation}
The main tool is the following conformal conservation law
\[
  \frac{d}{dt} Q(t, u, u_t) = \frac{4(3-p)t}{p+1} \int_{\Rm^3} \left|u(x,t)\right|^{p+1} dx.
\]
Here $Q(t,\varphi,\psi) = Q_0(t, \varphi, \psi) + Q_1(t, \varphi)$ is called the conformal charge with
\begin{align*}
 Q_0(t,\varphi,\psi) &= \left\|x\psi + t \nabla \varphi \right\|_{L^2(\Rm^3)}^2 + \left\|(t\psi+2\varphi)\frac{x}{|x|} +|x|\nabla \varphi\right\|_{L^2(\Rm^3)}^2;\\
 Q_1(t, \varphi) &= \frac{2}{p+1}\int_{\Rm^3} (|x|^2+t^2)|\varphi(x,t)|^{p+1} dx.
\end{align*}
The assumption \eqref{condition1} guarantees the finiteness of conformal charge $Q(t,u,u_t)$ when $t=0$. It immediately gives a global space-time integral estimate
\[
\int_{|t|>1} \int_{\Rm^3} |u(x,t)|^{p+1}\,dxdt \lesssim_p \sup_{t\in \Rm} Q_1(t, u(\cdot,t)) \leq \sup_{t\in \Rm} Q(t,u,u_t) = Q(0,u_0,u_1)<+\infty,
\]
which then implies the scattering of solutions. For more details please see \cite{conformal2, conformal}. 
\item The author's previous work \cite{shen3} proved the scattering result for $3 \leq p<5$ if initial data $(u_0, u_1) \in \dot{H}^1 \times L^2$ are radial and satisfy
\begin{align*}
  \int_{\Rm^3} (|x|+1)^{1+2\eps}\left(|\nabla u_0|^2 + |u_1|^2\right) dx < \infty
\end{align*}
for an arbitrary constant $\eps>0$, by introducing a conformal transformation: If $u$ is a solution as assumed, then for any $t_0\in \Rm$, the function
\[
   v(y, \tau)  = \frac{\sinh |y|}{|y|} e^\tau u \left( e^\tau \frac{\sinh |y|}{|y|}\cdot y, t_0 + e^\tau \cosh |y|\right), \quad (y,\tau) \in \Rm^3 \times \Rm
 \]
solves another wave equation
\[
 v_{\tau \tau} - \Delta_y v = - \left(\frac{|y|}{\sinh |y|}\right)^{p-1} e^{-(p-3)\tau} |v|^{p-1}v.
\]
We then apply a Morawetz-type estimate on the solutions $v$ of the second equation and rewrite it in the form of original solutions $u$. This helps to give a global space-time integral $\|u\|_{L^{2(p-1)} L^{2(p-1)}(\Rm\times \Rm^3)} < +\infty$ and finishes the proof. 
\item In the author's recent work \cite{pushmorawetz} we proved the same scattering result for radial solutions under a weaker assumption on initial data
\[
 \int_{\Rm^3} (|x|^\kappa+1)\left(\frac{1}{2}|\nabla u_0|^2 + \frac{1}{2}|u_1|^2 + \frac{1}{p+1}|u_0|^{p+1} \right) < +\infty.
\]
Here $\kappa>\kappa_1(p) = \frac{3(5-p)}{p+3}$ is a constant. The proof  uses a detailed version of the classic Morawetz estimate (see Section \ref{sec:morawetz} below) to give a decay rate of the space-time integral 
\[
  \int_{-\infty}^{+\infty} \int_{|x|>R} \frac{|u|^{p+1}}{|x|} dx dt \lesssim R^{-\kappa}. 
\]
This then gives the same estimate $\|u\|_{L^{2(p-1)} L^{2(p-1)}(\Rm\times \Rm^3)} < +\infty$ and implies the scattering.
\end{itemize}

\subsection{Main Results}

In this paper we always consider radial solutions to (CP1) with a finite energy. Energy-subcriticality guarantees the global existence of the solutions.The goal of this work is two-fold. 
\begin{itemize}
 \item We want to understand the spatial distribution of the energy as $t$ goes to infinity. This gives plentiful information about the global behaviour of solutions. 
 \item If the energy of initial data satisfies an additional decay assumption, we prove the scattering results.
\end{itemize}
In this subsection we give two main theorems. 

\begin{theorem} \label{main 1}
Assume $3\leq p<5$. Let $u$ be a radial solution to (CP1) with a finite energy $E$. Then there exist a three-dimensional free wave $v^-(x,t)$, with an energy $\tilde{E}_-\leq E$, so that 
 \begin{itemize}
  \item We have scattering outside any backward light cone ($R \in \Rm$)
   \[
     \lim_{t \rightarrow - \infty} \left\|\left(\nabla u(\cdot,t), u_t(\cdot,t)\right)- \left(\nabla v^- (\cdot,t), v_t^- (\cdot,t)\right)\right\|_{L^2(\{x\in \Rm^3:|x|>R+|t|\})} = 0.
   \]
   \item If we have $\tilde{E}_- = E$, then the scattering happens in the whole space in the negative time direction.
    \[
     \lim_{t \rightarrow - \infty} \left\|\left(u(\cdot,t), u_t(\cdot,t)\right)- \left(v^- (\cdot,t), v_t^- (\cdot,t)\right)\right\|_{\dot{H}^1\times L^2(\Rm^3)} = 0.
   \]
   \item If $\tilde{E}_- < E$, the remaining energy (also called ``retarded energy'') can be located: for any constants $c \in (0,1)$ and $\beta<\frac{2(p-2)}{p+1}$ we have
   \[
    \lim_{t \rightarrow - \infty} \int_{c|t|<|x|<|t|-|t|^\beta} \left(\frac{1}{2}|\nabla u(x,t)|^2 + \frac{1}{2}|u_t(x,t)|^2 + \frac{1}{p+1}|u(x,t)|^{p+1}\right) dx = E - \tilde{E}_-.
   \]
 \end{itemize}
 The asymptotic behaviour in the positive time direction is similar.
\end{theorem}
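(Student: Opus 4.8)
The plan is to pass to the one–dimensional reduction $v(r,t)=ru(r,t)$, which solves the half–line equation $v_{tt}-v_{rr}=-r^{1-p}|v|^{p-1}v$ with $v(0,t)=0$; a direct computation gives $4\pi\int_0^\infty e(v)\,dr=E$ for the one–dimensional density $e(v)=\frac12 v_t^2+\frac12 v_r^2+\frac1{p+1}r^{1-p}|v|^{p+1}$, together with the local identity $\partial_t e(v)=\partial_r(v_tv_r)$, which carries no boundary term at the origin since $v_t(0,t)=0$. With the null derivatives $v_\pm=v_t\pm v_r$ one has $v_t^2+v_r^2=\frac12(v_+^2+v_-^2)$, $v_tv_r=\frac14(v_+^2-v_-^2)$ and $(\partial_t\mp\partial_r)v_\pm=-r^{1-p}|v|^{p-1}v$, so the energy splits into inward, outward and potential pieces, $E=\pi\int_0^\infty v_-^2\,dr+\pi\int_0^\infty v_+^2\,dr+\frac{4\pi}{p+1}\int_0^\infty r^{1-p}|v|^{p+1}\,dr$; as $t\to-\infty$ it is $v_+$, transported along the null rays $r+t=\mathrm{const}$, that travels outward toward $r\simeq|t|$ and stores the ``scattering'' energy. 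Integrating $\partial_t e(v)=\partial_r(v_tv_r)$ over the exterior of a cone $\{r>\rho(t)\}$ with $|\rho'|\le1$ yields the energy flux formula; in particular $E(t;R):=\int_{|x|>|t|+R}e(u)\,dx$ is monotone in $t$, hence convergent as $t\to-\infty$, and the flux of energy through each light cone is finite and $\lesssim E$, controlling in particular $\int_{\{r+t=\tau\}}r^{1-p}|v|^{p+1}\,dr$. This flux bound is the engine of everything that follows.

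First I would construct the free wave $v^-$. Along a null ray $r+t=\tau$ we have $\frac{d}{dt}v_+=-r^{1-p}|v|^{p-1}v$, and the H\"older split $r^{1-p}|v|^p\le(r^{1-p}|v|^{p+1})^{p/(p+1)}r^{(1-p)/(p+1)}$ combined with $\int^\infty r^{1-p}\,dr<\infty$ (here $p\ge3>2$ is used) and the flux bound $\int_{\{r+t=\tau\}}r^{1-p}|v|^{p+1}\,dr\lesssim E$ makes the right–hand side integrable along the ray; therefore $v_+(\tau-t,t)$ converges as $t\to-\infty$ to a profile $\phi(\tau)$, and the condition at $r=0$ turns $\phi$ into the outgoing datum of a unique radial finite–energy free wave $v^-$ with $\tilde E_-=\pi\int_{\Rm}\phi^2$. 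Fatou's lemma gives $\tilde E_-\le\liminf_{t\to-\infty}\pi\int_0^\infty v_+^2\,dr\le E$. For the first bullet, in $\{|x|>|t|+R\}$ the inward part $\pi\int v_-^2\,dr$ tends to $0$ (the incoming energy present there issues from radii diverging with $|t|$, so it is a vanishing $L^2$ tail plus a flux–controlled correction — the ``inward energy vanishes'' mechanism in the negative time direction), the potential part tends to $0$ from finiteness of the light–cone fluxes foliating the exterior, and the $v_+$ part matches that of $v^-$ up to $\|v_+(\cdot,t)-\phi(\cdot+t)\|_{L^2(r>|t|+R)}$, which tends to $0$ by the (now standard) exterior scattering argument powered by the same fluxes; this yields the claimed $L^2$ convergence of $(\nabla u,u_t)$ to $(\nabla v^-,v_t^-)$ there, and in particular $E(t;R)\to\pi\int_R^\infty\phi^2$. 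The second bullet is then bookkeeping: when $\tilde E_-=E$, combining the first bullet (with $-R\to+\infty$, using $E(t;R)\to\pi\int_R^\infty\phi^2$) with the elementary fact that a finite–energy free wave has vanishing energy on $\{|x|\le|t|+R\}$ as $t\to-\infty$ once $-R$ is large forces $\|(u,u_t)-(v^-,v_t^-)\|_{\dot H^1\times L^2(\Rm^3)}\to0$.

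The heart of the theorem — and the step I expect to be the genuine obstacle — is the third bullet, which is also what pins down $\beta_0(p)=\frac{2(p-2)}{p+1}$. It requires pushing the preceding estimates past the crude pointwise bound $|v|\lesssim r^{1/2}E^{1/2}$, which is by itself too lossy in the whole range $3\le p<5$, and it needs three quantitative refinements. (i) The energy in the deep interior $\{|x|<c|t|\}$ must vanish as $t\to-\infty$; I would obtain this from the energy flux through cones $\{r=a|t|+b\}$ of speed $a<1$, whose density $\frac{1-a}{4}v_+^2-\frac{1+a}{4}v_-^2-\frac a{p+1}r^{1-p}|v|^{p+1}$ is no longer signed, handled in tandem with the classical Morawetz spacetime bound $\iint r^{-p}|v|^{p+1}\,dr\,dt<\infty$, which also tames the source there. (ii) The convergence in the first bullet must be made quantitative enough that the moving radius $|t|+R$ can be replaced by $|t|-|t|^\beta$ for every $\beta<\beta_0(p)$, i.e.\ one must show $\|v_+(\cdot,t)-\phi(\cdot+t)\|_{L^2(r>|t|-|t|^\beta)}\to0$; writing $|v_+(r,t)-\phi(r+t)|\le\int_r^\infty s^{1-p}|v|^{p}\,ds$, inserting the H\"older/flux bound, and integrating $dr$ over $\{r>|t|-|t|^\beta\}$, the outcome is $o(1)$ precisely while the window thickness $|t|^\beta$ stays below $|t|^{\beta_0(p)}$ — this balance between the size of the accumulated nonlinear correction and the geometry of the region is exactly where $\beta_0(p)=\frac{2(p-2)}{p+1}$ is produced. (iii) The potential energy just behind the light cone, $\int_{|t|-|t|^\beta}^\infty r^{1-p}|v|^{p+1}\,dr$, must be shown to tend to $0$, which again relies on the refined decay of $v_+-\phi$ there. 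With (i)--(iii) in hand, together with the elementary computation that the free wave $v^-$ deposits its full energy $\tilde E_-$ into $\{|x|>|t|-|t|^\beta\}$ as $t\to-\infty$, and conservation of $E$, one concludes $\int_{c|t|<|x|<|t|-|t|^\beta}e(u)\,dx\to E-\tilde E_-$. Finally, all the positive–time statements follow by applying everything above to the solution $(x,t)\mapsto u(x,-t)$, which solves the same equation with initial data $(u_0,-u_1)$.
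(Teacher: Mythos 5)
Your proposal follows essentially the same route as the paper: the reduction $w=ru$, the null splitting into inward/outward energies, the flux identities through light cones, the H\"older-plus-flux bound making the source integrable along characteristics so that $w_r+w_t$ has a limit profile $g_-$ generating the free wave $v^-$, and, for the third bullet, exactly the paper's balance of the accumulated nonlinear correction $(s-t)^{-\frac{p-2}{p+1}}$ against the window width $|t|^\beta$, which is where $\beta_0(p)=\frac{2(p-2)}{p+1}$ comes from. The only place your sketch is materially thinner than the paper is step (i), the vanishing of energy in $\{|x|<c|t|\}$: the paper handles it with the characteristic triangle law (including the flux measure at $r=0$, which you never track) plus an averaging-in-$r$ trick exploiting $\iint |w|^{p+1}r^{-p}\,dr\,dt\lesssim E$, whereas your subluminal-cone flux with unsigned density would need essentially that same argument to close; since you correctly identify the Morawetz spacetime bound as the decisive input, this is an omission of detail rather than a different method or a fatal gap.
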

\begin{remark}
 If $\tilde{E}_-<E$, then the energy distribution is illustrated in figure \ref{energydist}. The ``gap'' between scattering energy, which travels at the light speed, and ``retarded energy'', which travels slightly slower, becomes wider and wider as time goes to infinity.
\end{remark}

 \begin{figure}[h]
 \centering
 \includegraphics[scale=0.9]{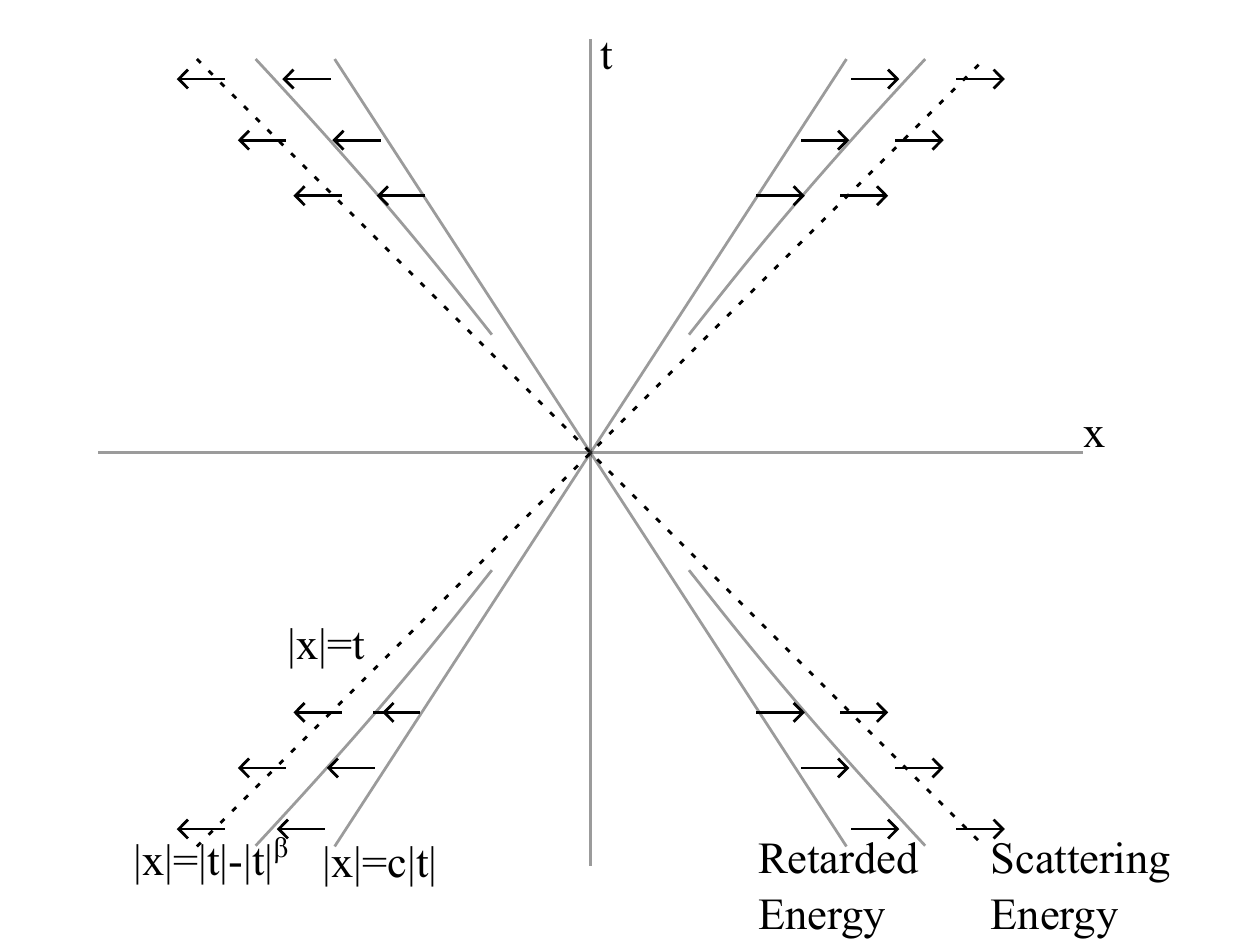}
 \caption{Illustration of travelling energy} \label{energydist}
\end{figure}

\begin{theorem} \label{main 2}
Assume $3\leq p<5$. Let $\kappa>\kappa_0(p)=\frac{5-p}{p+1}$ be a constant. If $u$ is a radial solution to (CP1) with initial data $(u_0,u_1)$ so that
 \[
  \int_{\Rm^3} (|x|^\kappa+1)\left(\frac{1}{2}|\nabla u_0|^2 + \frac{1}{2}|u_1|^2+\frac{1}{p+1}|u_0|^{p+1}\right) dx < +\infty,
 \]
 then $u$ must scatter in both two time directions. More precisely, there exists $(v_0^\pm ,v_1^\pm) \in \dot{H}^1 \times L^2(\Rm^3)$, so that 
 \[
  \lim_{t \rightarrow \pm \infty} \left\|\begin{pmatrix} u(\cdot,t)\\ \partial_t u(\cdot,t)\end{pmatrix} - 
  \mathbf{S}_L (t)\begin{pmatrix}u_0^\pm \\ u_1^\pm\end{pmatrix}\right\|_{\dot{H}^1 \times L^2(\Rm^3)} = 0.
 \]
 Here $\mathbf{S}_L (t)$ is the linear wave propagation operator.
\end{theorem}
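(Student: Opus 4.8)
The plan is to reduce scattering to a single global space-time bound, and then rule out the only obstruction by combining Theorem~\ref{main 1} with the detailed Morawetz estimate of Section~\ref{sec:morawetz}.

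\textit{Reduction.} First I recall the standard reduction: a radial finite-energy solution of (CP1) scatters in both time directions once $\|u\|_{L^{2(p-1)}_{t,x}(\Rm\times\Rm^3)}<+\infty$ --- this follows from the local Cauchy theory of \cite{ls} together with a perturbation argument, and the scattering data $(u_0^\pm,u_1^\pm)$ are then read off in the usual way. So it suffices to prove this global bound, and by time reversal we may work in the forward direction. Assume toward a contradiction that $\|u\|_{L^{2(p-1)}_{t,x}([0,+\infty)\times\Rm^3)}=+\infty$. Then $u$ does not scatter forward, so Theorem~\ref{main 1} applies with $\tilde E_+<E$. Write $E_r:=E-\tilde E_+>0$ and $e(u)=\frac12|\nabla u|^2+\frac12|u_t|^2+\frac1{p+1}|u|^{p+1}$, and fix $c\in(0,1)$ and $\beta<\beta_0(p)=\frac{2(p-2)}{p+1}$. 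Then
\[
 \lim_{t\to+\infty}\int_{ct<|x|<t-t^\beta}e(u(x,t))\,dx=E_r,\qquad \lim_{t\to+\infty}\int_{|x|<ct}e(u(x,t))\,dx=0,
\]
the second limit following from the first together with Theorem~\ref{main 1} and conservation of energy. Thus a fixed amount $E_r$ of energy persists, for all large $t$, inside the light cone, at radius comparable with $t$ but at distance at least $t^\beta$ behind $|x|=t$.

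\textit{Morawetz input.} Next, the weighted hypothesis on $(u_0,u_1)$ is exactly what closes the localized Morawetz identity of Section~\ref{sec:morawetz} and yields
\[
 \int_{\Rm}\int_{|x|>R}\frac{|u(x,t)|^{p+1}}{|x|}\,dx\,dt\ \lesssim\ R^{-\kappa},\qquad R\ge1.
\]
Since $|x|<t$ on the retarded annulus, this gives for every large $T$
\[
 \int_T^{+\infty}\frac1t\int_{ct<|x|<t-t^\beta}|u(x,t)|^{p+1}\,dx\,dt\ \le\ \int_{\Rm}\int_{|x|>cT}\frac{|u|^{p+1}}{|x|}\,dx\,dt\ \lesssim\ (cT)^{-\kappa},
\]
so the potential-energy content of the trapped blob is integrable against $dt/t$ and its weighted tail is $O((cT)^{-\kappa})$.

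\textit{Closing the loop.} It remains to derive a contradiction from the persistence of $E_r$, and this is the heart of the argument. I would proceed through the inward/outward energy decomposition and the energy flux identity already used for Theorem~\ref{main 1}: because the interior energy vanishes while no energy may escape past the light cone $|x|=t$, the $E_r$ worth of energy confined to $\{ct<|x|<t-t^\beta\}$ forces a definite amount of flux --- hence of the weighted nonlinearity $\frac1t\int|u|^{p+1}$ together with a gradient/kinetic contribution --- to survive for all large $t$. The gradient/kinetic part is handled by the pointwise bound $|u(x,t)|\lesssim|x|^{-1/2}E^{1/2}\lesssim(ct)^{-1/2}E^{1/2}$, valid for radial finite-energy solutions on that region, which makes the nonlinearity perturbative there and allows $u$ to be compared, on $\{|x|<t-\frac12 t^\beta\}$, with a free wave; but the energy of any finite-energy free wave on $\{|x|<t-\rho(t)\}$ tends to $0$ whenever $\rho(t)\to+\infty$ (strong Huygens and density), which is incompatible with $E_r>0$. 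The exponents meet precisely here: one needs $\beta\in(1-\kappa,\beta_0(p))$, a nonempty range exactly because $\kappa>\kappa_0(p)=1-\beta_0(p)$, so that the Morawetz rate $R^{-\kappa}$ above dominates the rate $t^{-(1-\beta)}$ at which the weighted retarded potential energy could otherwise decay. I expect this last step to be the main obstacle --- converting ``$E_r$ of trapped energy'' into a genuine obstruction to the Morawetz decay uniformly on a half-line of times, with the borderline $p=3$ (where the pointwise factor $(ct)^{-(p-1)/2}=(ct)^{-1}$ is only logarithmically integrable in $t$) requiring the flux identity rather than crude pointwise estimates. Once this is in place, the reduction above finishes the proof, and the negative-time statement follows by applying everything to $u(\cdot,-t)$.
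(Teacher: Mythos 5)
Your proposal has the right numerology ($\beta\in(1-\kappa,\beta_0(p))$ is nonempty exactly when $\kappa>\kappa_0(p)$), but two steps are genuinely problematic. First, the reduction to $\|u\|_{L^{2(p-1)}_{t,x}}<\infty$ is misplaced in this setting. Under the hypothesis $\kappa>\kappa_0(p)$ the data need not belong to $\dot H^{s_p}\times\dot H^{s_p-1}$ (the paper gives an explicit example after Theorem \ref{main 2}), so the critical-space Cauchy/perturbation theory you invoke is not available; and, worse for your contradiction set-up, scattering in the $\dot H^1\times L^2$ sense of Theorem \ref{main 2} does \emph{not} imply finiteness of the critical norm $L^{2(p-1)}_{t,x}$ (a free wave with merely energy-class data can have infinite $L^{2(p-1)}_{t,x}$ norm), so your chain ``$\|u\|_{L^{2(p-1)}}=\infty \Rightarrow$ no forward scattering $\Rightarrow \tilde E_+<E$'' has a gap, and the converse reduction would also have to be reproved at the energy level. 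The paper bypasses all of this: the scattering statement is obtained directly from the dichotomy on $\tilde E_\pm$, since Proposition \ref{equivalent scattering} (Part (b) of Theorem \ref{main 1}) already says that $\tilde E_\pm=E$ yields energy-space scattering with explicit profile $v^\pm$; so the only thing to rule out is $\tilde E_\pm<E$, with no space-time norm intervening.

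Second, the step you yourself flag as the main obstacle --- converting the trapped energy $E_r$ into a contradiction --- is missing, and the mechanism you sketch would not work as described. By Corollary \ref{asymptotic behaviour 2}, $\int_{\Rm^3}|u(x,t)|^{p+1}dx\to0$ as $t\to\pm\infty$, so the retarded energy is carried by the kinetic/gradient part, and no definite amount of the weighted nonlinearity $\frac1t\int|u|^{p+1}$ survives; since your Morawetz input controls only the exterior potential term $|u|^{p+1}/|x|$, it cannot be played against the trapped energy. Likewise, comparing $u$ with a free wave on $\{|x|<t-\tfrac12 t^{\beta}\}$ via the pointwise bound $|u|\lesssim|x|^{-1/2}$ is an unproven dispersive claim --- it is precisely the comparison that fails for the non-scattering part. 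The paper's actual closing argument is a clean space-time comparison you never set up: Proposition \ref{energy distribution by morawetz} bounds $\int_{|t|>R}\int_{|x|<R}e(u)\,dx\,dt$ by $\int_{-R}^{R}\int_{|x|>R}e(u)\,dx\,dt$; Part (c) of Theorem \ref{main 1}, applied on the time interval $(-R-R^\beta,-R)$, gives the lower bound $\tfrac{E-\tilde E_-}{2}R^{\beta}$ for the left side; and the monotonicity of the weighted energy $I(t)=\int_{|x|>|t|}(|x|-|t|)^{\kappa}e(u)\,dx\le I$ --- this, not a decay rate $R^{-\kappa}$ for the exterior Morawetz integral, is where the hypothesis on the data enters --- gives the upper bound $\tfrac{2}{1-\kappa}R^{1-\kappa}I$ for the right side. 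Choosing $\beta>1-\kappa$ and letting $R\to\infty$ yields the contradiction; without these two bounds your exponent comparison has nothing to compare.
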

\begin{remark}
The assumptions in our main theorems can not guarantee that $(u_0,u_1) \in \dot{H}^{s_p}\times \dot{H}^{s_p-1}$. For example, we can choose a radial function $u_0 \in C^\infty(\Rm^3)$ with decay
\begin{align*}
 &u_0(x) \simeq |x|^{-\frac{2(p+4)}{(p+1)^2}-\eps}; & &|\nabla u_0(x)| \simeq |x|^{-\frac{2(p+4)}{(p+1)^2}-1-\eps};& &|x|\gg 1.&
\end{align*}
Here $\eps$ is an sufficiently small positive constant. One can check that $(u_0,0)$ satisfies all the assumptions on initial data in two main theorems but $u_0 \notin L^{3(p-1)/2}(\Rm^3)$.  The latter implies that $u_0 \notin \dot{H}^{s_p}(\Rm^3)$ since we have the Sobolev embedding $\dot{H}^{s_p}(\Rm^3) \hookrightarrow L^{3(p-1)/2}(\Rm^3)$. As a result we have $(u(\cdot,t),u_t(\cdot,t)) \notin \dot{H}^{s_p} \times \dot{H}^{s_p-1}(\Rm^3)$ for any time $t$. This is the reason why in Theorem \ref{main 2} we measure the distance of $u$ and free waves by $\dot{H}^1 \times L^2$ norm instead. This is a phenomenon which has not been covered by previous results mentioned above. All the solutions discussed in those results come with initial data $(u_0,u_1) \in \dot{H}^{s_p} \times \dot{H}^{s_p-1}(\Rm^3)$.
\end{remark}

\subsection{The idea}

In this subsection we give the main idea of this paper and outline the proof of main theorems. The details can be found in later sections. 
 
\paragraph{Transformation to 1D} In order to take full advantage of our radial assumption, we use the following transformation: if $u$ is a radial solution to (CP1), then $w(r,t) = r u(x,t)$, where $|x|=r$, is a solution to one-dimensional wave equation
\[
 w_{tt} - w_{rr} = - \frac{|w|^{p-1}w}{r^{p-1}}.
\]
A basic calculation shows that 
\begin{align}
 &2\pi \int_a^b (|w_r(r,t)|^2+|w_t(r,t)|^2) dr \nonumber \\
 &\qquad = 2\pi\left[\int_a^b \left(r^2|u_r(r,t)|^2 + r^2|u_t(r,t)|^2\right)dr +b|u(b,t)|^2 - a|u(a,t)|^2\right]. \label{energy transformation}
\end{align}
Since for any radial $\dot{H}^1(\Rm^3)$ function $f(r)$, we have 
\begin{equation*}
 \lim_{r\rightarrow 0^+} r|f(r)|^2  = \lim_{r\rightarrow +\infty} r|f(r)|^2 = 0.
\end{equation*}
It immediately follows that
\begin{equation}
 2\pi \int_0^\infty (|w_r(r,t)|^2+|w_t(r,t)|^2) dr = \int_{\Rm^3} \left(\frac{1}{2}|\nabla u|^2 + \frac{1}{2} |u_t|^2 \right) dx. \label{energy between u and w}
\end{equation}
The new solution $w$ also satisfies an energy conservation law
\begin{align*}
 E(w,w_t) \doteq 2\pi \int_0^\infty (|w_r(r,t)|^2+|w_t(r,t)|^2+\frac{2}{p+1}\cdot \frac{|w(r,t)|^{p+1}}{r^{r-1}}) dr = E(u,u_t) = \hbox{Const}.
\end{align*}
The main tool to understand spatial energy distribution is the energy flux formula for inward and outward energy. 

\paragraph{Inward and Outward Energy} Let us first define

\begin{definition} \label{energies}
Let $w$ be a solution as above. We define the inward and outward energy
\begin{align*}
E_- (t) & = \pi \int_{0}^{\infty} \left(\left|w_r(r,t)+w_t(r,t)\right|^2+\frac{2}{p+1}\cdot\frac{|w(r,t)|^{p+1}}{r^{p-1}}\right) dr\\
 E_+ (t) & = \pi \int_{0}^{\infty} \left(\left|w_r(r,t)-w_t(r,t)\right|^2+\frac{2}{p+1}\cdot\frac{|w(r,t)|^{p+1}}{r^{p-1}}\right) dr
\end{align*}
We also need to consider their truncated versions
\begin{align*}
 E_- (t;r_1,r_2) & = \pi \int_{r_1}^{r_2} \left(\left|w_r(r,t)+w_t(r,t)\right|^2+\frac{2}{p+1}\cdot\frac{|w(r,t)|^{p+1}}{r^{p-1}}\right) dr\\
 E_+ (t;r_1,r_2) & = \pi \int_{r_1}^{r_2} \left(\left|w_r(r,t)-w_t(r,t)\right|^2+\frac{2}{p+1}\cdot\frac{|w(r,t)|^{p+1}}{r^{p-1}}\right) dr
 \end{align*}
\end{definition}
\begin{remark}
 The inward energy travels toward the origin as time $t$ increases; the outward energy travels in the opposite direction, as indicated by their names.  
\end{remark}

\paragraph{Energy Flux} We consider flux of inward and outward energies through either characteristic lines $t+r = \hbox{Const}$, $t-r = \hbox{Const}$ or the $t$-axis. This helps to give the following results 

\begin{itemize}
 \item[(a)] Almost all energy is outward energy as $t\rightarrow +\infty$.
 \begin{align*}
  &\lim_{t\rightarrow +\infty} E_-(t) = 0;& &\lim_{t\rightarrow +\infty} E_+(t) = E.&
 \end{align*}
 \item[(b)] Almost all energy is inward energy as $t\rightarrow -\infty$.
 \begin{align*}
  &\lim_{t\rightarrow -\infty} E_+(t) = 0;& &\lim_{t\rightarrow +\infty} E_-(t) = E.&
 \end{align*}
  \item[(c)] The energy flux through characteristic lines are bounded from the above. In particular, the following inequalities hold for any $s, \tau \in \Rm$.
 \begin{align*}
  &\int_{-\infty}^s \frac{|w(s-t,t)|^{p+1}}{(s-t)^{p-1}} dt \lesssim_p E;& &\int_\tau^\infty \frac{|w(t-\tau,t)|^{p+1}}{(t-\tau)^{p-1}} dt \lesssim_p E.&
 \end{align*}
 \end{itemize}
\paragraph{Asymptotic Behaviour} The characteristic line method gives
\begin{align*}
 &\frac{\partial}{\partial t}\left[(w_r+w_t)(s-t,t)\right] = -\frac{|w|^{p-1}w(s-t,t)}{(s-t)^{p-1}};\\
 &\frac{\partial}{\partial t}\left[(w_r-w_t)(t-\tau,t)\right] = \frac{|w|^{p-1}w(t-\tau,t)}{(t-\tau)^{p-1}}.
\end{align*}
Combining these two identities with part (c) above, we have the following convergence holds uniformly in any bounded interval. 
\begin{align*}
  &\lim_{t \rightarrow -\infty} (w_r+w_t)(s-t,t) = g_-(s);& &\lim_{t \rightarrow +\infty} (w_r-w_t)(t-\tau,t) = g_+(\tau).&
\end{align*}
We also have the following $L^2$ convergence for any $s_0,\tau_0\in \Rm$.
\begin{align*}
 & \left\|w_r(t-\tau,t) - \frac{1}{2}g_+(\tau)\right\|_{L^2((-\infty,\tau_0])} \rightarrow 0,& &\left\|w_t(t-\tau,t) +\frac{1}{2}g_+(\tau)\right\|_{L^2((-\infty,\tau_0])}\rightarrow 0,& &\hbox{as} \; t\rightarrow +\infty;&\\
 & \left\|w_r(s-t,t) - \frac{1}{2}g_-(s)\right\|_{L^2([s_0,\infty))} \rightarrow 0,& &\left\|w_t(s-t,t) -\frac{1}{2}g_-(s)\right\|_{L^2([s_0,\infty))}\rightarrow 0,& &\hbox{as} \; t\rightarrow -\infty.&
\end{align*}
This gives us the free waves $v^-(x,t)$ and $v^+(x,t)$ in the conclusion part (a) of theorem \ref{main 1}.  
\begin{align*}
 &v^-(x,t) = \frac{1}{2|x|}\int_{t-|x|}^{t+|x|} g_-(s) ds;& &v^+(x,t) = \frac{1}{2|x|}\int_{t-|x|}^{t+|x|} g_+(\tau) d\tau.&
\end{align*}
We can then prove Part (b) and (c) by considering the energy located in different regions via the energy flux formula. More details are given in Section \ref{sec:distribution}.

\paragraph{Morawetz Estimates} Another important ingredient of Theorem \ref{main 2}'s proof is a more detailed version of the classic Morawetz estimate as given below, which plays an key role in the author's recent work \cite{pushmorawetz} as well. This is a little different from the original inequality given by Perthame and Vega in the work \cite{benoit}, but can be deduced from the original one without difficulty. Please see Subsection \ref{sec:morawetz} for more details.
\begin{align*}
 & \frac{1}{2R}\int_{-\infty}^{+\infty} \int_{|x|<R}\left(\frac{1}{2}|\nabla u|^2+\frac{1}{2}|u_t|^2+\frac{1}{p+1}|u|^{p+1}\right) dx dt + \frac{1}{4R^2} \int_{-\infty}^{+\infty} \int_{|x|=R} |u|^2 d\sigma_R dt \\
 & \qquad + \frac{p-3}{2(p+1)R} \int_{-\infty}^{+\infty} \int_{|x|<R} |u|^{p+1} dx dt +\frac{p-1}{2(p+1)} \int_{-\infty}^{+\infty} \int_{|x|>R} \frac{|u|^{p+1}}{|x|} dx dt  \leq E. 
\end{align*}
If we pick up the last term in the left hand side and make $R\rightarrow 0^+$, we obtain the most frequently used Morawetz estimate:
\[
 \int_{-\infty}^{+\infty} \int_{\Rm^3} \frac{|u|^{p+1}}{|x|} dx dt  \leq CE.
\]
In this work, however, we choose large radius $R$ in the long inequality above and observe an important fact:  The first term in the left hand side itself is almost equal to $E$. This is because for almost all time $t \in [-R,R]$, as long as $t$ is not too closed to $R$ or $-R$, almost all energy concentrates in the ball of radius $R$, thanks to finite speed of propagation. As a result, we discard all other terms in the left hand and focus on the first term:
\[ 
 \int_{-\infty}^{+\infty} \int_{|x|<R}\left(\frac{1}{2}|\nabla u|^2+\frac{1}{2}|u_t|^2+\frac{1}{p+1}|u|^{p+1}\right) dx dt \leq 2RE.
\]
We can substitute the right hand side by $\int_{-R}^R \int_{\Rm^3} \left(\frac{1}{2}|\nabla u|^2+\frac{1}{2}|u_t|^2+\frac{1}{p+1}|u|^{p+1}\right) dx dt$, and rewrite the inequality in another form
\begin{align*}
 &\int_{|t|>R} \int_{|x|<R}\left(\frac{1}{2}|\nabla u|^2+\frac{1}{2}|u_t|^2+\frac{1}{p+1}|u|^{p+1}\right) dx dt\\
  & \qquad \leq \int_{-R}^{+R} \int_{|x|>R}\left(\frac{1}{2}|\nabla u|^2+\frac{1}{2}|u_t|^2+\frac{1}{p+1}|u|^{p+1}\right) dx dt.
\end{align*}
In physics this means that the total contribution of ``retarded energy'' when $t>|R|$ is always smaller or equal to the total contribution of energy which escapes the ball of radius $R$ in the time interval $[-R,R]$. This enables us to prove Theorem \ref{main 2} by a contradiction. On one hand, our theory on energy distribution of solutions gives a lower bound of the contribution of ``retarded energy'', unless the solution scatters. On the other hand, if we assume that the initial data satisfy a suitable decay condition, we can find an upper bound of the escaping energy. Thus we can simply make $R\rightarrow +\infty$ and compare the upper and lower bounds to finish the proof.
 
 \subsection{The Structure of This Paper}
 
This paper is organized as follows. In section 2 we collect notations, recall the classic Morawetz estimates and give a few preliminary results. Next in Section 3 we give a general formula for inward and outward energy flux. This helps to prove the energy distribution properties of the solutions in Section 4. Finally we prove the scattering of the solution $u$ under an additional decay assumption in the last section.
 
\section{Preliminary Results}

\subsection{Notations}

\paragraph{The $\lesssim$ symbol} We use the notation $A \lesssim B$ if there exists a constant $c$, so that the inequality $A \leq c B$ always holds.  In addition, a subscript of the symbol $\lesssim$ indicates that the constant $c$ is determined by the parameter(s) mentioned in the subscript but nothing else. In particular, $\lesssim_1$ means that the constant $c$ is an absolute constant. 

\paragraph{Radial functions} Let $u(x,t)$ be a spatially radial function. By convention $u(r,t)$ represents the value of $u(x,t)$ when $|x| = r$. 


\subsection{Uniform Pointwise Estimates}
In this subsection we first recall
\begin{lemma}[Please see Lemma 3.2 of Kenig and Merle's work \cite{km}] 
If $u$ is a radial $\dot{H}^1(\Rm^3)$ function, then 
\[
 |u(r)| \lesssim_1 r^{-1/2} \|u\|_{\dot{H}^1(\Rm^3)}.
\]
\end{lemma}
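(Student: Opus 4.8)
The plan is to establish the bound first for smooth, compactly supported radial functions and then recover the general case by density. So suppose first that $u \in C_c^\infty(\Rm^3)$ is radial, and as usual write $u(r)$ for its value on the sphere of radius $r$. Since $u$ vanishes for large $r$, the fundamental theorem of calculus gives the representation $u(r) = -\int_r^\infty u'(\rho)\,d\rho$.

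Next I would estimate this integral by Cauchy--Schwarz after inserting the weight $\rho$: writing $|u'(\rho)| = \rho^{-1}\cdot\rho|u'(\rho)|$,
\[
 |u(r)| \le \int_r^\infty |u'(\rho)|\,d\rho \le \left(\int_r^\infty \frac{d\rho}{\rho^2}\right)^{1/2}\left(\int_r^\infty \rho^2|u'(\rho)|^2\,d\rho\right)^{1/2} = r^{-1/2}\left(\int_r^\infty \rho^2 |u'(\rho)|^2\,d\rho\right)^{1/2}.
\]
For a radial function one has $\nabla u = u'(r)\,x/|x|$, hence $\|u\|_{\dot{H}^1(\Rm^3)}^2 = \int_{\Rm^3}|\nabla u|^2\,dx = 4\pi\int_0^\infty \rho^2|u'(\rho)|^2\,d\rho$, so the last factor is at most $(4\pi)^{-1/2}\|u\|_{\dot{H}^1(\Rm^3)}$. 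This yields $|u(r)| \le (4\pi)^{-1/2}\,r^{-1/2}\|u\|_{\dot{H}^1(\Rm^3)}$, which is the asserted inequality with an absolute constant.

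Finally, for a general radial $u \in \dot{H}^1(\Rm^3)$, I would pick radial $u_n \in C_c^\infty(\Rm^3)$ with $u_n \to u$ in $\dot{H}^1$. Applying the inequality just proved to the differences $u_n - u_m$ shows that $(u_n)$ is uniformly Cauchy on each set $\{r \ge \delta\}$, $\delta > 0$; hence $u_n$ converges locally uniformly on $(0,\infty)$ to a continuous function, which is the canonical representative of $u$, and the inequality passes to the limit. No step here is genuinely difficult; the only point that calls for a little care is this last limiting argument, i.e.\ checking that a radial $\dot{H}^1$ function admits a continuous representative decaying at infinity for which the integral representation $u(r) = -\int_r^\infty u'(\rho)\,d\rho$ is legitimate — everything else amounts to two applications of Cauchy--Schwarz.
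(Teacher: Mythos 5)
Your argument is correct: for smooth compactly supported radial $u$ the identity $u(r)=-\int_r^\infty u'(\rho)\,d\rho$ plus Cauchy--Schwarz with the weight $\rho$ gives exactly $|u(r)|\le (4\pi)^{-1/2}r^{-1/2}\|u\|_{\dot H^1}$, and the density step is sound (the differences $u_n-u_m$ obey the same bound, so $u_n$ is uniformly Cauchy on $\{r\ge\delta\}$ and the locally uniform limit agrees a.e.\ with $u$, e.g.\ via the $\dot H^1\hookrightarrow L^6$ convergence). Note, however, that the paper does not prove this lemma at all; it simply cites Lemma 3.2 of Kenig--Merle, so there is no in-paper proof to match. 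The closest in-paper analogue is the proof of Lemma \ref{best upper bound pointwise}, where the author works with $w=ru$, uses $|w(r)-w(r_0)|\le (E/2\pi)^{1/2}|r-r_0|^{1/2}$ (again FTC plus Cauchy--Schwarz, but without any weight since $w_r\in L^2(dr)$) and the fact $w(0)=0$ to recover $|w(r)|\le (Er)^{1/2}$, i.e.\ the same $r^{-1/2}$ bound, integrating from the origin rather than from infinity. Your version pays for the boundary term at infinity with compact support and a density argument; the paper's $w$-based version pays for it at the origin, using the convergence of the singular integral $\int |w|^{p+1}/r^{p-1}\,dr$ to force $w(0)=0$. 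Both are two-line Cauchy--Schwarz arguments, and either yields the stated absolute-constant estimate.
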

\noindent Therefore we always have $|w(r,t)| \lesssim_1 E^{1/2} r^{1/2}$. Because $u$ is not only a $\dot{H}^1(\Rm^3)$ function but also an $L^{p+1}(\Rm^3)$ function, this can be further improved if $r$ is large.
\begin{lemma} \label{best upper bound pointwise}
 If $w: [0,\infty) \rightarrow \Rm$ satisfies 
 \[
  2\pi \int_{0}^\infty \left(|w_r(r)|^2 + \frac{2}{p+1}\frac{|w(r)|^{p+1}}{r^{p-1}}\right)dr \leq E,
 \]
 then we have $|w(r)| \lesssim_{p} E^{2/(p+3)} r^{(p-1)/(p+3)} $.
\end{lemma}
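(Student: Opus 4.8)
The plan is to interpolate between the two pieces of information contained in the hypothesis: the $\dot H^1$-type bound on $w_r$ (which via the previous Kenig--Merle lemma gives $|w(r)|\lesssim_1 E^{1/2}r^{1/2}$) and the weighted $L^{p+1}$ bound $\int_0^\infty \frac{|w(r)|^{p+1}}{r^{p-1}}\,dr \lesssim_p E$. The first estimate is sharp for small $r$, the second should take over for large $r$, and the stated exponent $r^{(p-1)/(p+3)}$ is exactly the crossover scaling, so one expects a fixed-point / balancing argument at a well-chosen scale.

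First I would fix $r_0>0$ and seek to bound $|w(r_0)|$. Write $w(r_0) = w(r) + \int_r^{r_0} w_r(s)\,ds$ for $r<r_0$, or better, average this identity over $r\in[r_0/2,r_0]$ to avoid any single bad point. Cauchy--Schwarz on the integral term gives $\big|\int_r^{r_0}w_r\big| \le \big(\int_{r_0/2}^{r_0}|w_r|^2\,ds\big)^{1/2}(r_0/2)^{1/2}$. To control $\big(\int_{r_0/2}^{r_0}|w_r|^2\big)^{1/2}$ I would \emph{not} just use $\int_0^\infty|w_r|^2 \le E/(2\pi)$ (that only recovers the $r^{1/2}$ bound); instead I need to also use the averaged value of $|w|$ on $[r_0/2,r_0]$ coming from the weighted $L^{p+1}$ term. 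Concretely, let $M = \sup_{r\in[r_0/2,r_0]}|w(r)|$ (or the value at the point realizing the averaging). On one hand $M \lesssim \big(\int_{r_0/2}^{r_0}|w_r|^2\big)^{1/2}r_0^{1/2} + (\text{averaged }|w|)$. On the other hand the weighted $L^{p+1}$ bound forces the averaged-$(p+1)$-power of $|w|$ over $[r_0/2,r_0]$ to be $\lesssim_p E\, r_0^{p-1}/r_0 = E r_0^{p-2}$, hence the averaged $|w|$ there is $\lesssim_p (Er_0^{p-2})^{1/(p+1)}$.

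The remaining step is the self-improvement: combine $M \lesssim_p (E/2\pi)^{1/2}r_0^{1/2}$ (crude bound) with the refined inequality to get a better power. The clean way: split at the scale $r_*$ where the two candidate bounds $E^{1/2}r^{1/2}$ and $E^{2/(p+3)}r^{(p-1)/(p+3)}$ agree, i.e. $r_* \simeq E^{(p-3)/(p+3)}$ (note $\frac{1}{2}-\frac{p-1}{p+3} = \frac{5-p}{2(p+3)}>0$, so indeed the second bound is the smaller one for $r\gtrsim r_*$). For $r\le r_*$ the Kenig--Merle lemma already gives $|w(r)|\lesssim_1 E^{1/2}r^{1/2}\le E^{2/(p+3)}r^{(p-1)/(p+3)}$ after using $r\le r_*$ to trade the powers. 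For $r\ge r_*$ I would run the averaging estimate above, now using the weighted $L^{p+1}$ term as the \emph{dominant} source: the term $(Er_0^{p-2})^{1/(p+1)}$ is exactly $E^{2/(p+3)}r_0^{(p-1)/(p+3)}$ up to the replacement $r_0^{(p-2)/(p+1)}$ versus $r_0^{(p-1)/(p+3)}$ — these differ, so one must be careful and instead bound $\int_{r_0/2}^{r_0}\frac{|w|^{p+1}}{r^{p-1}}\,dr \le E/(2\pi)$ literally and extract $\inf_{[r_0/2,r_0]}|w|^{p+1}\lesssim_p E r_0^{p-1}/r_0 = E r_0^{p-2}$, then bootstrap through $w_r$.

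The main obstacle I anticipate is precisely this bootstrap bookkeeping: the naive combination of the two a priori bounds does not immediately close, because the weighted $L^{p+1}$ norm only controls $|w|$ at \emph{some} point in each dyadic annulus, not at $r_0$ itself, and transferring that control to $r_0$ costs a $\big(\int|w_r|^2\big)^{1/2}r_0^{1/2}$ term whose naive estimate reintroduces $E^{1/2}r_0^{1/2}$. The resolution is an iteration: feed the current bound $|w(r)|\le A(r)$ into the inequality $|w(r_0)| \lesssim A(r_0/2) \cdot(\text{something}<1 \text{ in the relevant regime}) + E^{2/(p+3)}r_0^{(p-1)/(p+3)}$ — but since that "something" is not $<1$, one instead iterates over dyadic scales $r_0, 2r_0, 4r_0,\dots$, noting $w\to 0$ at infinity (it is an $\dot H^1$ function), and sums a convergent geometric-type series in the exponents. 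I would carry this out by setting $f(j) = \sup\{|w(r)|r^{-(p-1)/(p+3)} : r\in[2^j,2^{j+1}]\}$, deriving a recursion $f(j)\le \theta f(j+1) + C_p E^{2/(p+3)}$ with $\theta<1$ (the gain $\theta$ coming from the mismatch between $r^{1/2}$ growth of the error integral and the target growth rate, valid once $r\gtrsim r_*$), and using $f(j)\to 0$ as $j\to\infty$ to conclude $f(j)\le \frac{C_p}{1-\theta}E^{2/(p+3)}$ for all $j$ with $2^j\ge r_*$. For the low range $2^j< r_*$ one invokes the Kenig--Merle lemma directly as above. Combining the two ranges yields $|w(r)|\lesssim_p E^{2/(p+3)}r^{(p-1)/(p+3)}$ for all $r>0$.
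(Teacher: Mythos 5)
Your overall strategy (combine the H\"older continuity coming from $\int|w_r|^2\lesssim E$ with a pigeonhole on the weighted $L^{p+1}$ integral) contains the right ingredients, but the scheme as written does not close, and the device you invoke to close it --- the dyadic recursion $f(j)\le\theta f(j+1)+C_pE^{2/(p+3)}$ with $\theta<1$ --- is never derived and in fact cannot hold with your setup. With the transfer interval fixed at length $\sim r_0$, your estimate reads $|w(r_0)|\lesssim_p (E r_0^{p-2})^{1/(p+1)}+(E r_0)^{1/2}$; the second term is an \emph{absolute} error, not one proportional to a neighboring value of $f$, so there is no mechanism generating a contraction factor. Normalized by the target, this error is $E^{1/2}r_0^{1/2-\frac{p-1}{p+3}}=E^{1/2}r_0^{\frac{5-p}{2(p+3)}}$, which \emph{grows} with the dyadic index for $p<5$, so no iteration over scales can absorb it, and one cannot replace the global bound $\int_0^\infty|w_r|^2\le E/2\pi$ by anything smaller per block (the energy may concentrate in a single annulus). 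Two further assertions are also off: $w=ru$ does not tend to $0$ at infinity (only $r^{-1/2}w(r)\to 0$ does), so the claim $f(j)\to 0$ is unjustified as stated; and the crossover scale where $E^{1/2}r^{1/2}$ meets $E^{2/(p+3)}r^{(p-1)/(p+3)}$ is $r_*\simeq E^{-(p-1)/(5-p)}$, not $E^{(p-3)/(p+3)}$ (your value is not consistent with the scaling of the problem).

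The missing idea is to let the length of the transfer interval depend on the unknown height rather than on $r_0$. This is what the paper does, and it removes any need for iteration: set $S=|w(r_0)|$ and note $S\le (Er_0)^{1/2}$; the H\"older bound $|w(r)-w(r_0)|\le (E/2\pi)^{1/2}|r-r_0|^{1/2}$ forces $|w|>S/2$ on the whole interval $[r_0,r_0+S^2/E]$, whose length is at most $r_0$, and then the hypothesis gives
\[
E\;\gtrsim_p\;\int_{r_0}^{r_0+S^2/E}\frac{|w(r)|^{p+1}}{r^{p-1}}\,dr\;\ge\;\frac{(S/2)^{p+1}}{(2r_0)^{p-1}}\cdot\frac{S^2}{E},
\]
i.e.\ $S^{p+3}\lesssim_p E^2 r_0^{p-1}$, which is exactly the claim. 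Equivalently, in your language: keep $\delta$ free in $|w(r_0)|\lesssim_p (Er_0^{p-1}/\delta)^{1/(p+1)}+(E\delta)^{1/2}$ and optimize, choosing $\delta\simeq E^{(1-p)/(p+3)}r_0^{2(p-1)/(p+3)}$ (which is $\le r_0$ precisely when $r_0\gtrsim r_*$; below $r_*$ the trivial bound $(Er_0)^{1/2}$ already beats the target). With the interval length chosen this way the two terms balance at $E^{2/(p+3)}r_0^{(p-1)/(p+3)}$ and the proof is complete in one step.
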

\begin{proof}
First of all, we observe 
 \[
  |w(r)-w(r_0)| = \left|\int_{r_0}^r w_r(s) ds \right| \leq |r-r_0|^{1/2} \left(\int_{r_0}^r |w_r(s)|^2 ds\right)^{1/2} \leq (E/2\pi)^{1/2} |r-r_0|^{1/2}.
 \]
 Thus $w(r)$ converges as $r\rightarrow 0^+$. Since the singular integral of $|w(r)|^{p+1}/r^{p-1}$ near zero converges, it is clear that $w(0)=0$. Plugging $r_0=0$ in the estimate above, we re-discover the pointwise estimate $|w(r)|<(Er)^{1/2}$. Assume $|w(r_0)| = S \leq (Er_0)^{1/2}$.  By the inequality above we have $|w(r)-w(r_0)|<S/2$ thus $|w(r)|>S/2$ for $r \in [r_0,r_0+S^2/E]$.  As a result
\begin{align*}
E \geq \int_{r_0}^{r_0+S^2/E} \frac{|w(r)|^{p+1}}{r^{p-1}} \geq \frac{(S/2)^{p+1}}{(r_0+S^2/E)^{p-1}}\cdot S^2/E \geq \frac{(S/2)^{p+1}}{(2r_0)^{p-1}}\cdot S^2/E.
\end{align*} 
This means $E \gtrsim_p S^{p+3}/(r_0^{p-1}E)$. Thus we have $S \lesssim_p E^{2/(p+3)} r_0^{(p-1)/(p+3)}$ and finish the proof.
\end{proof}


\subsection{Morawetz Estimates} \label{sec:morawetz}
 
\begin{theorem} [Please see Perthame and Vega's work \cite{benoit}, we use the 3-dimensional case]
Let $u$ be a solution to (CP1) defined in a time interval $[0,T]$ with a finite energy $E$. Then we have the following inequality for any $R>0$
\begin{align}
 & \frac{1}{2R}\int_0^T \!\!\int_{|x|<R}(|\nabla u|^2+|u_t|^2) dx dt + \frac{1}{2R^2} \int_0^T \!\!\int_{|x|=R} |u|^2 d\sigma_R dt + \frac{p-2}{(p+1)R} \int_0^T \!\!\int_{|x|<R} |u|^{p+1} dx dt \nonumber \\
 & \qquad + \frac{p-1}{p+1} \int_0^T \int_{|x|>R} \frac{|u|^{p+1}}{|x|} dx dt + \frac{1}{R^2} \int_{|x|<R} |u(x,T)|^2 dx \leq 2E. \label{morawetz}
\end{align}
\end{theorem}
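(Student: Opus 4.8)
I would prove \eqref{morawetz} by the classical Morawetz multiplier argument, adapted to a radial weight that is truncated at radius $R$. Writing $\partial_r u:=\frac{x}{|x|}\cdot\nabla u$, the plan is to test the equation $\partial_t^2u-\Delta u+|u|^{p-1}u=0$ against the multiplier
\[
 \mathcal{M}u:=f(|x|)\,\partial_r u+\sigma(|x|)\,u,\qquad f(r)=\frac{\min(r,R)}{R},\qquad \sigma(r)=\frac{1}{\max(r,R)},
\]
and integrate over the slab $\Rm^3\times[0,T]$. Thus $f(r)=r/R$ and $\sigma(r)=1/R$ for $r\le R$, while $f\equiv 1$ and $\sigma(r)=1/r$ for $r\ge R$; in particular, outside the ball $\mathcal{M}u$ is the usual Morawetz multiplier $\partial_r u+u/|x|$, which is what produces the classical weighted term $\tfrac{p-1}{p+1}\,\tfrac{|u|^{p+1}}{|x|}$ there. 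Since only finite energy is assumed, I would first carry out the whole computation for smooth, rapidly decaying solutions --- available from the subcritical local theory applied to smooth, compactly supported data --- so that every integration by parts is legitimate, and then recover the general case by approximation, using the uniform energy bound.

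The main step is the differential identity obtained after integrating by parts in $x$, using $u_t\,\partial_r u_t=\tfrac12\partial_r(|u_t|^2)$, $-\int\Delta u\,\mathcal{M}u\,dx=\int\nabla u\cdot\nabla(\mathcal{M}u)\,dx$, and $|u|^{p-1}u\,\partial_r u=\partial_r\!\big(\tfrac{|u|^{p+1}}{p+1}\big)$. It takes the form
\[
 \frac{d}{dt}\int_{\Rm^3}u_t\,\mathcal{M}u\,dx+\mathcal{B}(t)=0,
\]
where, once the choices of $f$ and $\sigma$ are inserted, the density of the spatial integral $\mathcal{B}(t)$ splits into a manifestly nonnegative part --- $\tfrac{1}{2R}(|\nabla u|^2+|u_t|^2)$ on $|x|<R$ (the coefficients of $|u_t|^2$, $|\partial_r u|^2$ and of the angular part of $|\nabla u|^2$ all collapsing to $\tfrac{1}{2R}$ precisely because of the choice of $\sigma$), $\tfrac{1}{|x|}(|\nabla u|^2-|\partial_r u|^2)$ on $|x|>R$, and a surface term $\tfrac{1}{2R^2}|u|^2\,d\sigma_R$ on $|x|=R$ arising from the corner of $\sigma$ at $r=R$ (a $\delta$-mass in $\sigma''$ entering with the favorable sign) --- together with nonlinear terms that, after the integration by parts against the weight, reorganize into $\tfrac{p-2}{(p+1)R}|u|^{p+1}$ on $|x|<R$, $\tfrac{p-1}{p+1}\tfrac{|u|^{p+1}}{|x|}$ on $|x|>R$, and a nonnegative $|u|^{p+1}$ surface term on $|x|=R$. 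Integrating over $t\in[0,T]$ and discarding the surplus nonnegative pieces, the left-hand side of \eqref{morawetz} is controlled by the time-boundary term $\big|\int u_t\,\mathcal{M}u\,dx\big|$ evaluated at $t=0$ and $t=T$: the $f\,\partial_r u$ contribution is at most $\tfrac12\int_{\Rm^3}(|u_t|^2+|\nabla u|^2)\,dx\le E$ because $|f|\le 1$, and the $\sigma u$ contribution is controlled by Hardy's inequality $\int_{\Rm^3}|u|^2/|x|^2\,dx\lesssim\int_{\Rm^3}|\nabla u|^2\,dx$ together with $\sigma\le 1/R$; a completion of squares at the endpoint $t=T$ is what keeps the term $\tfrac{1}{R^2}\int_{|x|<R}|u(x,T)|^2\,dx$ on the left. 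Adding the two endpoint contributions gives the constant $2E$.

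The main obstacle is the bookkeeping, not any individual estimate: one must verify that the interaction of the vector-field part $f(|x|)\,\partial_r u$ with the scalar correction $\sigma(|x|)\,u$ produces a bulk density with exactly the coefficients $\tfrac{1}{2R}$, $\tfrac{1}{2R^2}$, $\tfrac{p-2}{(p+1)R}$, $\tfrac{p-1}{p+1}$ and with no term of unfavorable sign --- which is what pins down $\sigma$ --- and one must treat the distributional ($\delta$-function) contributions at $|x|=R$, present because $f$ and $\sigma$ are only Lipschitz, either directly or by smoothing the weight over a thin shell near $|x|=R$ and passing to the limit; it helps here that the second derivative $f''$, where the worst concentration would live, does not enter $\mathcal{B}(t)$ at all. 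The completion of squares isolating the final-time $L^2$ term, and the density argument reducing the general finite-energy case to smooth solutions --- where a priori finiteness of quantities such as $\int_{\Rm^3}|u|^2/|x|\,dx$ or of traces on $\{|x|=R\}$ is not guaranteed --- are routine but require care. (In the radial case one could alternatively run the identical computation for the one-dimensional function $w=|x|\,u$ introduced earlier, which absorbs the weight $|x|^2$ and streamlines the integrations by parts; but the statement as written is for general solutions and is cleanest via the space-time multiplier above, essentially as in \cite{benoit}.)
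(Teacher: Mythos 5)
The paper does not prove this theorem at all: it is quoted as a known result of Perthame and Vega \cite{benoit} (with a remark that the coefficient of the final term should be $\tfrac{1}{R^2}$ rather than the $\tfrac{2}{R^2}$ printed there), so your proposal is compared against a citation, not against an argument in the text. Your sketch is the standard truncated Morawetz multiplier proof, and it is essentially sound: with $f(r)=\min(r,R)/R$, $\sigma(r)=1/\max(r,R)$ the bulk density does come out exactly as claimed, namely $\tfrac{f'}{2}+\tfrac{f}{r}-\sigma=\tfrac{1}{2R}$ for $|u_t|^2$ and $\tfrac{1}{2R}$ for $|\nabla u|^2$ inside, $\tfrac{1}{|x|}\bigl(|\nabla u|^2-|\partial_r u|^2\bigr)$ outside, $-\tfrac{1}{p+1}\bigl(f'+\tfrac{2f}{r}\bigr)+\sigma$ giving $\tfrac{p-2}{(p+1)R}$ inside and $\tfrac{p-1}{(p+1)|x|}$ outside, and the jump of $\sigma'$ at $r=R$ producing the favorable surface term $\tfrac{1}{2R^2}|u|^2$; your observation that $f''$ never enters is also correct. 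Two caveats. First, there is no $|u|^{p+1}$ surface term at $|x|=R$ (both $f$ and $\sigma$ are continuous there, and the nonlinear term only sees $f'+\tfrac{2f}{r}$ and $\sigma$, which are bounded); this is harmless since you discard it. Second, and more substantively, Hardy's inequality is too lossy to yield the stated constant $2E$: bounding $\|\sigma u\|_{L^2}$ by $2\|\nabla u\|_{L^2}$ at an endpoint gives a bound strictly worse than $E$ per endpoint. What makes the constant (and the final-time term) come out right is the exact cancellation in the cross term,
\begin{equation*}
 \int_{\Rm^3}\bigl(f\,\partial_r u+\sigma u\bigr)^2 dx=\int_{\Rm^3} f^2|\partial_r u|^2\,dx-\frac{2}{R^2}\int_{|x|<R}|u|^2\,dx,
\end{equation*}
obtained from $2\int f\sigma\,u\,\partial_r u\,dx=-\int \frac{(f\sigma r^2)'}{r^2}|u|^2\,dx$; using this at both endpoints gives at most $E$ from each (and, at $t=T$, leaves $\tfrac{1}{R^2}\int_{|x|<R}|u(x,T)|^2dx$ on the left, consistent with the corrected coefficient in the paper's remark). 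With that replacement your ``completion of squares'' step is exactly right, and the remaining issues you flag (smoothing the Lipschitz weight near $|x|=R$, density of smooth solutions) are indeed routine.
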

\begin{remark}
The notations $p$ and $E$ represent slightly different constants in the original paper \cite{benoit} and this current paper. Here we rewrite the inequality in the setting of the current work. The coefficient before the integral $\int_{B(0,R)} |u(T)|^2 dx$ was $\frac{d^2-1}{4R^2}$ (in the 3-dimensional case $\frac{2}{R^2}$) in the original paper. But the author believes that this is a minor typing mistake. It should have been $\frac{d^2-1}{8R^2}$ instead. 
\end{remark} 
\begin{remark}
 The upper bound of time interval $T$ does not appear in any of the coefficients above. We also have an energy conservation law. As a result, we can substitute the time interval $[0,T]$ by any bounded time interval $[T_1,T_2]$ or even $(-\infty,T]$. If we ignore the final term on the left hand side, we can all use the time interval $(-\infty,\infty)$.
 \begin{align}
 & \frac{1}{2R}\int_{-\infty}^{\infty} \!\int_{|x|<R}(|\nabla u|^2+|u_t|^2) dx dt + \frac{1}{2R^2} \int_{-\infty}^{\infty} \!\int_{|x|=R} |u|^2 d\sigma_R dt\nonumber \\
  & \qquad + \frac{p-2}{(p+1)R} \int_{-\infty}^{\infty} \!\int_{|x|<R} |u|^{p+1} dx dt + \frac{p-1}{p+1} \int_{-\infty}^{\infty} \int_{|x|>R} \frac{|u|^{p+1}}{|x|} dx dt  \leq 2E. \label{morawetz1}
\end{align}
\end{remark}
\noindent This Morawetz estimate plays two different roles in this work. On one hand, it gives a few global integral estimates, which our theory of energy distribution depends on. One of the most popular ones is $\int_{-\infty}^\infty \int_{\Rm^3} \frac{|u(x,t)|^{p+1}}{|x|} dx dt \lesssim E$. On the other hand, this Morawetz estimate also gives direct information on energy distribution, as we mentioned in the introduction part. Let us discuss these aspects one-by-one.
\paragraph{Global Estimates} 
If we pick the second term in the left hand side of \eqref{morawetz1} and use the radial assumption, we obtain
\[
 \sup_{r>0} \int_{-\infty}^{+\infty} |u(r,t)|^2 dt \leq \frac{E}{\pi}.
\]
We recall $w = ru$, use the Morawetz estimate \eqref{morawetz1} again and obtain
\begin{align*}
 & \int_{-\infty}^{+\infty} \int_0^R (|w_r(r,t)|^2+|w_t(r,t)|^2) dr dt \\
 & \qquad = \int_{-\infty}^{+\infty} \int_0^R r^2 (|u_r(r,t)|^2 +|u_t(r,t)|^2) dr dt + \int_{-\infty}^{+\infty} R|u(R,t)|^2 dt \leq \frac{RE}{\pi}
\end{align*}
We can also pick the third term of \eqref{morawetz1} and rewrite it in term of $w$
\[
 \int_{-\infty}^{+\infty} \int_0^R \frac{|w|^{p+1}}{r^{p-1}} dr dt \leq \frac{(p+1)}{2(p-2)\pi}RE.
\]
Finally we pick the forth term, make $R\rightarrow 0^+$ and rewrite it in term of $w$
\[
 \int_{-\infty}^{+\infty} \int_{\Rm^3} \frac{|u|^{p+1}}{|x|} dx dt \leq \frac{2(p+1)}{p-1}E 
 \Rightarrow \int_{-\infty}^{+\infty} \int_0^\infty \frac{|w|^{p+1}}{r^p} dr dt \leq \frac{p+1}{2(p-1)\pi} E.
\]
In summary, we have (The second line immediately follows the first line)
\begin{corollary} \label{cor w morawetz}
 Let $u$ be a radial solution to (CP1) with a finite energy $E$. Then $u$ and $w = ru$ satisfy 
 \begin{align*}
   \int_{-\infty}^{+\infty} \int_0^R \left(|w_r(r,t)|^2+|w_t(r,t)|^2+\frac{|w(r,t)|^{p+1}}{r^{p-1}}\right) dr dt & \lesssim_{p} RE;\\
   \liminf_{r\rightarrow 0^+} \int_{-\infty}^{+\infty} \left(|w_r(r,t)|^2+|w_t(r,t)|^2+\frac{|w(r,t)|^{p+1}}{r^{p-1}}\right) dt & \lesssim_p E;\\
   \int_{-\infty}^{+\infty} \int_0^\infty \frac{|w(r,t)|^{p+1}}{r^p} dr dt & \lesssim_{p} E.\\
   \sup_{r>0} \int_{-\infty}^{+\infty} |u(r,t)|^2 dt & \lesssim_p E.
 \end{align*}
\end{corollary}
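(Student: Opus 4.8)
The plan is to extract all four bounds one term at a time from the Perthame--Vega inequality \eqref{morawetz1} and then convert each to the variable $w = ru$ using the radial identities $\int_{|x|<R} f(|x|)\,dx = 4\pi\int_0^R r^2 f(r)\,dr$ and $\int_{|x|=R} f\,d\sigma_R = 4\pi R^2 f(R)$; concretely, $\int_{|x|<R}|u|^{p+1}\,dx = 4\pi\int_0^R \frac{|w|^{p+1}}{r^{p-1}}\,dr$, $\int_{\Rm^3}\frac{|u|^{p+1}}{|x|}\,dx = 4\pi\int_0^\infty \frac{|w|^{p+1}}{r^p}\,dr$, and $\int_{|x|=R}|u|^2\,d\sigma_R = 4\pi R^2|u(R,t)|^2$.

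I would first dispose of the last line: keeping only the second term of \eqref{morawetz1} gives $2\pi\int_{-\infty}^{+\infty}|u(R,t)|^2\,dt \le 2E$ for every $R>0$, which is $\sup_{r>0}\int_{-\infty}^{+\infty}|u(r,t)|^2\,dt \lesssim_p E$. For the first line, the first term of \eqref{morawetz1} (with $|\nabla u| = |u_r|$) gives $4\pi\int_{-\infty}^{+\infty}\int_0^R r^2(|u_r|^2+|u_t|^2)\,dr\,dt \le 4RE$; combining this with the identity \eqref{energy transformation} at $a=0$ — whose inner boundary term vanishes because $u(\cdot,t)\in\dot H^1(\Rm^3)$, the fact quoted just before \eqref{energy between u and w} — and with the trace bound above, I obtain $\int_{-\infty}^{+\infty}\int_0^R(|w_r|^2+|w_t|^2)\,dr\,dt \lesssim RE$. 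The third term of \eqref{morawetz1}, whose coefficient $\frac{p-2}{(p+1)R}$ is positive since $p \ge 3 > 2$, becomes after the volume rule $\int_{-\infty}^{+\infty}\int_0^R \frac{|w|^{p+1}}{r^{p-1}}\,dr\,dt \lesssim_p RE$. Summing the two estimates proves the first displayed inequality.

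The second inequality follows from the first by averaging: setting $f(r) := \int_{-\infty}^{+\infty}\bigl(|w_r|^2+|w_t|^2+\tfrac{|w|^{p+1}}{r^{p-1}}\bigr)(r,t)\,dt \ge 0$, the first inequality reads $\int_0^R f(r)\,dr \le C_p R E$, so $\inf_{0<r<R} f(r) \le \tfrac1R\int_0^R f(r)\,dr \le C_p E$ for every $R>0$, and letting $R\to 0^+$ gives $\liminf_{r\to0^+} f(r) \le C_p E$. For the third inequality I would use the fourth term of \eqref{morawetz1}, which gives $\int_{-\infty}^{+\infty}\int_{|x|>R}\tfrac{|u|^{p+1}}{|x|}\,dx\,dt \le \tfrac{2(p+1)}{p-1}E$ uniformly in $R>0$; sending $R\to 0^+$ by monotone convergence and rewriting via the exterior volume rule yields $\int_{-\infty}^{+\infty}\int_0^\infty \tfrac{|w|^{p+1}}{r^p}\,dr\,dt \lesssim_p E$.

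I do not expect a genuine obstacle here — the proof is essentially bookkeeping. The two points that warrant a sentence of justification are that \eqref{morawetz1} is legitimate over all of $(-\infty,\infty)$ (this is the remark following \eqref{morawetz}: use energy conservation and discard the final nonnegative term of \eqref{morawetz}) and the vanishing of $\lim_{r\to 0^+} r|u(r,t)|^2$ needed in \eqref{energy transformation}, which is the radial $\dot H^1$ fact already recorded in the text. Each passage to the limit in $R$ above is over a nonnegative integrand and so is harmless.
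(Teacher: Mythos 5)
Your proof is correct and follows essentially the same route as the paper: each of the four bounds is read off from one term of the Morawetz inequality \eqref{morawetz1}, the conversion to $w=ru$ is done via \eqref{energy transformation} with the vanishing boundary term at $r=0$ plus the trace bound $2\pi\int|u(R,t)|^2dt\le 2E$, and the $\liminf$ line is obtained from the first line by the same averaging argument the paper invokes when it says the second line ``immediately follows the first line.'' The only differences are cosmetic (explicit constants versus $\lesssim_p$).
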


\paragraph{Energy Distribution Information} We can combine part of the third term of \eqref{morawetz1} with the first term, divide both sides by $2$ and obtain 
\begin{align}
 & \frac{1}{2R}\int_{-\infty}^{+\infty} \int_{|x|<R}\left(\frac{1}{2}|\nabla u|^2+\frac{1}{2}|u_t|^2+\frac{1}{p+1}|u|^{p+1}\right) dx dt + \frac{1}{4R^2} \int_{-\infty}^{+\infty} \int_{|x|=R} |u|^2 d\sigma_R dt\nonumber\\
 & \qquad + \frac{p-3}{2(p+1)R} \int_{-\infty}^{+\infty} \int_{|x|<R} |u|^{p+1} dx dt +\frac{p-1}{2(p+1)} \int_{-\infty}^{+\infty} \int_{|x|>R} \frac{|u|^{p+1}}{|x|} dx dt  \leq E. \label{new morawetz}
\end{align}
This helps to give a decay rate $\int_{-\infty}^\infty \int_{|x|>R} \frac{|u(x,t)|^{p+1}}{|x|} dx dt \lesssim R^{-\kappa}$ in the author's recent work \cite{pushmorawetz}. In this work, we only need a weaker version of this inequality. Since every term in the left hand side is nonnegative, we can focus on the first term and obtain
 \[
   \int_{-\infty}^{+\infty} \int_{|x|<R}\left(\frac{1}{2}|\nabla u|^2+\frac{1}{2}|u_t|^2+\frac{1}{p+1}|u|^{p+1}\right) dx dt \leq 2RE.
 \]
 We substitute the right hand side by $\int_{-R}^R \int_{\Rm^3} \left(\frac{1}{2}|\nabla u|^2+\frac{1}{2}|u_t|^2+\frac{1}{p+1}|u|^{p+1}\right) dx dt$, split the integral in the left hand side into two parts, with time $t\in [-R,R]$ and $|t|>R$ respectively, combine the first part with the right hand side and finally obtain 
\begin{proposition} \label{energy distribution by morawetz}
 For any $R>0$, we have 
 \begin{align*}
 &\int_{|t|>R} \int_{|x|<R}\left(\frac{1}{2}|\nabla u|^2+\frac{1}{2}|u_t|^2+\frac{1}{p+1}|u|^{p+1}\right) dx dt\\
  & \qquad \leq \int_{-R}^{+R} \int_{|x|>R}\left(\frac{1}{2}|\nabla u|^2+\frac{1}{2}|u_t|^2+\frac{1}{p+1}|u|^{p+1}\right) dx dt.
\end{align*}
\end{proposition}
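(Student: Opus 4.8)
The plan is to extract the desired inequality from the refined Morawetz estimate \eqref{new morawetz} by discarding all of its non-negative terms except the first one. Writing $e(x,t) = \frac{1}{2}|\nabla u(x,t)|^2 + \frac{1}{2}|u_t(x,t)|^2 + \frac{1}{p+1}|u(x,t)|^{p+1}$ for the energy density, \eqref{new morawetz} gives at once $\frac{1}{2R}\int_{-\infty}^{+\infty}\int_{|x|<R} e\,dx\,dt \le E$, hence
\[
\int_{-\infty}^{+\infty}\int_{|x|<R} e\,dx\,dt \le 2RE,
\]
and in particular this space-time integral is finite.

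Next I would use the energy conservation law $E = \int_{\Rm^3} e(x,t)\,dx$, valid for all $t$ for the finite-energy radial solutions considered here, to rewrite the right-hand side as a space-time integral over the slab $|t|\le R$:
\[
2RE = \int_{-R}^{R}\int_{\Rm^3} e\,dx\,dt = \int_{-R}^{R}\int_{|x|<R} e\,dx\,dt + \int_{-R}^{R}\int_{|x|>R} e\,dx\,dt .
\]
On the left-hand side of the previous bound I split the time integration into $|t|\le R$ and $|t|>R$:
\[
\int_{-\infty}^{+\infty}\int_{|x|<R} e\,dx\,dt = \int_{-R}^{R}\int_{|x|<R} e\,dx\,dt + \int_{|t|>R}\int_{|x|<R} e\,dx\,dt .
\]
Combining the two displays, the term $\int_{-R}^{R}\int_{|x|<R} e\,dx\,dt$ occurs on both sides; since it is finite (being $\le 2RE$) it may be cancelled, leaving precisely the claimed inequality
\[
\int_{|t|>R}\int_{|x|<R} e\,dx\,dt \le \int_{-R}^{R}\int_{|x|>R} e\,dx\,dt .
\]

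I do not expect any genuine obstacle: the argument is pure bookkeeping once \eqref{new morawetz} and energy conservation are in hand. The only two points that merit a line of justification are (i) that the energy conservation law indeed holds for the class of finite-energy radial solutions under consideration, so that $\int_{\Rm^3} e(\cdot,t)\,dx$ is genuinely constant in $t$, and (ii) that the common term removed in the cancellation is finite, which is exactly what the preliminary bound $\le 2RE$ provides; without this finiteness the subtraction would not be legitimate. Both facts are already available from the earlier material, so nothing further is needed.
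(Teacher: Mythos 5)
Your proposal is correct and follows essentially the same route as the paper: keep only the first term of the refined Morawetz inequality \eqref{new morawetz} (all other terms being nonnegative since $p\geq 3$), replace $2RE$ by $\int_{-R}^{R}\int_{\Rm^3} e\,dx\,dt$ via energy conservation, and cancel the common slab term. The only addition you make beyond the paper's argument is the explicit remark that the cancelled term is finite, which is a reasonable (if minor) point of care.
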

\noindent The following will not be used in the argument of this paper, instead it is a corollary of our theory on energy distribution. 
\begin{remark}
 A careful review of Perthame and Vega's calculation shows that for a radial solution $u$, we actually have an identity for any $R>0$ and $T_1<T_2$
 \begin{align*}
  & \frac{1}{2R}\int_{T_1}^{T_2} \int_{|x|<R}\left(\frac{1}{2}|\nabla u|^2+\frac{1}{2}|u_t|^2+\frac{1}{p+1}|u|^{p+1}\right) dx dt + \frac{1}{4R^2} \int_{T_1}^{T_2} \int_{|x|=R} |u|^2 d\sigma_R dt\nonumber\\
 & \qquad + \frac{p-3}{2(p+1)R} \int_{T_1}^{T_2} \int_{|x|<R} |u|^{p+1} dx dt +\frac{p-1}{2(p+1)} \int_{T_1}^{T_2} \int_{|x|>R} \frac{|u|^{p+1}}{|x|} dx dt  \\
 = &  2\pi \left[\int_0^R \frac{r}{R} w_t(r,T_1) w_r(r,T_1) dr + \int_R^\infty w_t(r,T_1)w_r(r,T_1) dr \right]\\
 & \qquad -2\pi \left[\int_0^R \frac{r}{R} w_t(r,T_2) w_r(r,T_2) dr + \int_R^\infty w_t(r,T_2)w_r(r,T_2) dr \right]
 \end{align*}
 By the identity $w_t w_r = \frac{1}{4}\left[(w_r+w_t)^2 - (w_r-w_t)^2\right]$, Proposition \ref{monotonicity of energies}, Corollary \ref{asymptotic behaviour 2} and Lemma \ref{lower bound of speed}, we know that if we make $T_1\rightarrow -\infty$, $T_2 \rightarrow +\infty$, then the limit of the right hand side is exactly $E$. Therefore the inequality \eqref{new morawetz} is actually an identity. Please note that the radial assumption is essential because we discard a term in the form of $\int_{T_1}^{T_2} \int_{|x|>R} {\mathbf D} u \cdot {\mathbf D}^2 \Phi \cdot {\mathbf D} u \,dx dt$ in the left hand side, where ${\mathbf D}^2 \Phi(x)$ is a positive semidefinite matrix whose eigenvalue $0$ has a single eigenvector $x$. This term vanishes only for radial solutions. 
\end{remark}
\section{Energy Flux for Inward and Outward Energies}

In this section we consider the inward and outward energies given in Definition \ref{energies} and give energy flux formula of them. 

\subsection{General Energy Flux Formula}

The following is a simple application of Green's Theorem, but it is indeed a general version of energy flux formula.
\begin{proposition}[General Energy Flux] \label{energy flux formula}
 Let $\Omega$ be a closed region in the right half $(0,\infty)\times \Rm$ of $r-t$ space. Its boundary $\Gamma$ consists of finite line segments, which are paralleled to either $t$-axis, $r$-axis or characteristic lines $t\pm r = 0$, and is oriented counterclockwise. Then we have an identity
 \begin{align}
  \pi \int_{\Gamma} \left(|w_r+w_t|^2 + \frac{2}{p+1}\cdot \frac{|w|^{p+1}}{r^{p-1}}\right) dr & + \left(|w_r+w_t|^2 - \frac{2}{p+1}\cdot \frac{|w|^{p+1}}{r^{p-1}}\right) dt \nonumber \\
  & \qquad -\frac{2\pi(p-1)}{p+1} \iint_{\Omega} \frac{|w|^{p+1}}{r^p} dr dt = 0. \label{energy flux inward}\\
  \pi \int_{\Gamma} \left(|w_r-w_t|^2 + \frac{2}{p+1}\cdot \frac{|w|^{p+1}}{r^{p-1}}\right) dr & + \left(-|w_r-w_t|^2 + \frac{2}{p+1}\cdot \frac{|w|^{p+1}}{r^{p-1}}\right) dt \nonumber \\
  &\qquad  + \frac{2\pi(p-1)}{p+1} \iint_{\Omega} \frac{|w|^{p+1}}{r^p} dr dt = 0 \label{energy flux outward}. 
 \end{align}
Furthermore, there exists a finite, nonnegative, continuous\footnote{Continuity means the function $\mu((-\infty,t])$ is a continuous function of $t$.} measure $\mu$ with $\mu(\Rm) \lesssim_p E$, which is solely determined by $u$ and independent of $\Omega$, so that the identities above also hold for regions $\Omega$ with part of its boundary on the $t$-axis. In this case the line integral from the point $(0,t_2)$ downward to $(0,t_1)$ along $t$-axis is understood as $- \pi \int_{t_1}^{t_2} 1 \,d\mu(t)$ in identity \eqref{energy flux inward} or $\pi \int_{t_1}^{t_2} 1 \,d\mu(t)$ in identity \eqref{energy flux outward}
\end{proposition}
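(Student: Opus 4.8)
The plan is to derive \eqref{energy flux inward} and \eqref{energy flux outward} from Green's theorem in the $r$-$t$ plane for regions $\Omega$ that stay away from the $t$-axis, and then to manufacture the measure $\mu$ by a limiting procedure in which the portion of $\Gamma$ lying on the $t$-axis is replaced by the vertical segment $\{r=\delta\}$ and one lets $\delta\to 0^+$.

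For the interior identities, write the $1$-form on the left of \eqref{energy flux inward} as $P\,dr+Q\,dt$ with $P=|w_r+w_t|^2+\tfrac{2}{p+1}\tfrac{|w|^{p+1}}{r^{p-1}}$ and $Q=|w_r+w_t|^2-\tfrac{2}{p+1}\tfrac{|w|^{p+1}}{r^{p-1}}$; Green's theorem turns $\oint_\Gamma(P\,dr+Q\,dt)$ into $\iint_\Omega(\partial_r Q-\partial_t P)\,dr\,dt$. A short computation, in which every term containing $w_{rr},w_{rt}$ or $w_{tt}$ is removed using $w_{rr}-w_{tt}=\tfrac{|w|^{p-1}w}{r^{p-1}}$, collapses $\partial_r Q-\partial_t P$ to exactly $\tfrac{2(p-1)}{p+1}\tfrac{|w|^{p+1}}{r^p}$; this cancellation is the heart of the matter and gives \eqref{energy flux inward}. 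Choosing instead $P=|w_r-w_t|^2+\tfrac{2}{p+1}\tfrac{|w|^{p+1}}{r^{p-1}}$ and $Q=-|w_r-w_t|^2+\tfrac{2}{p+1}\tfrac{|w|^{p+1}}{r^{p-1}}$ reverses the sign of the area term and yields \eqref{energy flux outward}. Since $w$ need not be smooth, I would justify the integration by parts by mollifying in space-time on compact subsets of $\{r>0\}$ (the cancellation above being stable under this), or by a density argument starting from smooth data, for which $3\le p<5$ together with the defocusing sign gives global smooth solutions.

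Next, for a region $\Omega$ whose boundary contains the axis segment $\{0\}\times[t_1,t_2]$ — traversed downward from $(0,t_2)$ to $(0,t_1)$ in the counterclockwise orientation — I would apply the interior identity on $\Omega\cap\{r>\delta\}$, whose new boundary piece is $\{\delta\}\times[t_1,t_2]$ together with corner segments of length $O(\delta)$ whose contribution is dominated by $\pi\int_0^\delta\bigl(|w_r|^2+|w_t|^2+\tfrac{2}{p+1}\tfrac{|w|^{p+1}}{r^{p-1}}\bigr)(r,t)\,dr$ and hence vanishes. As $\delta\to 0^+$ the area term converges since $\int_{-\infty}^{+\infty}\int_0^\infty\tfrac{|w|^{p+1}}{r^p}\,dr\,dt\lesssim_p E$ by Corollary \ref{cor w morawetz}, and at each fixed time $\int_0^\infty\bigl(|w_r|^2+|w_t|^2+\tfrac{2}{p+1}\tfrac{|w|^{p+1}}{r^{p-1}}\bigr)\,dr=E/(2\pi)<\infty$; comparing two values $\delta'<\delta$ through the interior identity on $(\delta',\delta)\times[t_1,t_2]$ shows $\int_{t_1}^{t_2}Q(\delta,t)\,dt$ is Cauchy, so the limit
\[
 \mu([t_1,t_2]):=\lim_{\delta\to 0^+}\int_{t_1}^{t_2}\Bigl(|w_r+w_t|^2-\tfrac{2}{p+1}\tfrac{|w(r,t)|^{p+1}}{r^{p-1}}\Bigr)\Big|_{r=\delta}\,dt
\]
exists and is precisely the axis contribution (with the conventional sign), independent of $\Omega$ because truncation at $r=\delta$ always produces the same segment. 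Running the same computation on $(0,R)\times[t_1,t_2]$ and letting $R\to\infty$ (using $E_-(t)=\pi\int_0^\infty(|w_r+w_t|^2+\tfrac{2}{p+1}\tfrac{|w|^{p+1}}{r^{p-1}})\,dr$ from Definition \ref{energies} and the decay of the remaining boundary integral along a suitable sequence $R_n\to\infty$) gives the clean formula $\pi\mu([t_1,t_2])=E_-(t_1)-E_-(t_2)-\tfrac{2\pi(p-1)}{p+1}\int_{t_1}^{t_2}\int_0^\infty\tfrac{|w|^{p+1}}{r^p}\,dr\,dt$. From it $\mu$ is finitely additive, extends to a Borel measure with $\mu(\Rm)\le E/\pi$, and is continuous because $\mu([t,t+h])\le\tfrac1\pi(E_-(t)-E_-(t+h))\to 0$ as $h\to0$. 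That the same $\mu$ serves for \eqref{energy flux outward} with the opposite sign is, via the analogous computation, equivalent to the energy conservation law $E_-(t)+E_+(t)=E$.

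The main obstacle is the non-negativity of $\mu$, because the defining integrand $|w_r+w_t|^2-\tfrac{2}{p+1}\tfrac{|w|^{p+1}}{r^{p-1}}$ is not pointwise non-negative. Here I would exploit the extra weight $1/r$ in the Morawetz bound: since $\int_0^\epsilon\tfrac1r\bigl(\int_{-\infty}^{+\infty}\tfrac{|w(r,t)|^{p+1}}{r^{p-1}}\,dt\bigr)\,dr=\int_{-\infty}^{+\infty}\int_0^\epsilon\tfrac{|w|^{p+1}}{r^p}\,dr\,dt\lesssim_p E<\infty$ while $\int_0^\epsilon\tfrac{dr}{r}=+\infty$, the function $r\mapsto\int_{-\infty}^{+\infty}\tfrac{|w(r,t)|^{p+1}}{r^{p-1}}\,dt$ must have $\liminf$ equal to $0$ as $r\to 0^+$; so there is a sequence $\delta_n\to 0^+$ along which $\int_{t_1}^{t_2}\tfrac{|w(\delta_n,t)|^{p+1}}{\delta_n^{p-1}}\,dt\to 0$, and then $\mu([t_1,t_2])=\lim_{n\to\infty}\int_{t_1}^{t_2}|w_r+w_t|^2(\delta_n,t)\,dt\ge 0$. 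Morally the origin acts as a lossless reflector (recall $w(0,t)=0$), so the flux density through it is $|w_r|^2$ there; the $1/r$-weighted Morawetz estimate is exactly what makes this rigorous without imposing any extra regularity on $w$.
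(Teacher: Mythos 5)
Your proposal is correct in substance and follows the same overall architecture as the paper: Green's theorem plus the equation $w_{tt}-w_{rr}=-|w|^{p-1}w/r^{p-1}$ for regions away from the axis (your computation of $\partial_r Q-\partial_t P$ and the sign flip for the outward form are exactly right), and then a limit $r\to 0^+$ along a truncation at $r=\delta$ to define $\mu$, with the Morawetz bound $\iint |w|^{p+1}/r^p\,dr\,dt\lesssim_p E$ controlling the bulk term. The places where you genuinely deviate are instructive. For nonnegativity, the paper proves the \emph{uniform} decay $\int_{\Rm}\frac{|w(r,t)|^{p+1}}{r^{p-1}}\,dt\lesssim_{p,E}r^{(5-p)/2}$ (estimate \eqref{decay of w line}, from $|w|\lesssim_E r^{1/2}$ together with $\sup_{r}\int_{\Rm}|u(r,t)|^2dt\lesssim_p E$), so the potential part drops out of the full limit; you instead extract a sequence $\delta_n\to0^+$ by a pigeonhole argument from the $1/r$-weighted Morawetz bound. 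Since you have already shown the full limit exists, evaluating it along your sequence is legitimate, so your argument is valid and uses slightly less (no pointwise bound on $w$), at the price of getting no rate. For the outward identity with the same $\mu$, your route via $E_+=E-E_-$/energy conservation is precisely the alternative the paper's remark endorses; to make it complete you should say explicitly that what is needed is $\lim_{r\to0^+}\int_{t_1}^{t_2}w_rw_t(r,t)\,dt=0$, which follows by applying the summed (total-energy) flux identity on $[r,R]\times[t_1,t_2]$, letting $R\to\infty$ along a sequence where the flux at $r=R$ vanishes, and invoking conservation of energy.

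The one step you assert without justification is the continuity of $\mu$: your inequality $\pi\mu([t,t+h])\le E_-(t)-E_-(t+h)$ only gives continuity if $t\mapsto E_-(t)$ is continuous, and that includes continuity of $t\mapsto\int_{\Rm^3}|u(\cdot,t)|^{p+1}dx$ over the whole space, which is not among the facts you have quoted. This is exactly the point the paper treats carefully: it works with the truncated quantity $E_-(t;0,1)$ (right edge at $r=1$ rather than $R\to\infty$), whose continuity follows from $(u,u_t)\in C(\Rm_t;\dot H^1\times L^2)$, the identity $4\pi\int_0^1|w|^{p+1}r^{1-p}dr=\int_{B(0,1)}|u|^{p+1}dx$, and the \emph{local} embedding $\dot H^1(\Rm^3)\hookrightarrow L^{p+1}(B(0,1))$. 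You can repair your argument either by citing continuity of the flow in the energy space (standard for the defocusing subcritical equation) or, more economically, by running your own Cauchy/clean-formula computation on $(0,1)\times[t_1,t_2]$ as the paper does, so that only the local embedding is needed. With that line added, your proof is complete.
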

\paragraph{Line integrals} Before we give the outline of the proof, let us first have a look at what the line integrals look like for different types of boundary line segments. In table \ref{line integrals in energy flux} we always assume $r_1<r_2$, $t_1<t_2$. For example, when we say a line segment $r=r_0$ goes downward, it starts at time $t_2$ and ends at time $t_1$.
\begin{table}[h]
\caption{Line integrals in energy flux}
\begin{center}
\begin{tabular}{|c|c|c|}\hline
 Boundary type & Inward Energy Case & Outward Energy Case\\
 \hline
 Horizontally $\rightarrow$ & $\int_{r_1}^{r_2}\left(|w_r+w_t|^2 + \frac{2}{p+1}\cdot \frac{|w|^{p+1}}{r^{p-1}}\right) dr$ & 
 $\int_{r_1}^{r_2}\left(|w_r-w_t|^2 + \frac{2}{p+1}\cdot \frac{|w|^{p+1}}{r^{p-1}}\right) dr$ \\
 \hline
 Horizontally $\leftarrow$ & $-\int_{r_1}^{r_2}\left(|w_r+w_t|^2 + \frac{2}{p+1}\cdot \frac{|w|^{p+1}}{r^{p-1}}\right) dr$ & 
 $-\int_{r_1}^{r_2}\left(|w_r-w_t|^2 + \frac{2}{p+1}\cdot \frac{|w|^{p+1}}{r^{p-1}}\right) dr$ \\
 \hline
 $r=r_0>0$,$\uparrow$ & $\int_{t_1}^{t_2} \left(|w_r+w_t|^2 - \frac{2}{p+1}\cdot \frac{|w|^{p+1}}{r^{p-1}}\right) dt$ &
 $\int_{t_1}^{t_2} \left(-|w_r-w_t|^2 + \frac{2}{p+1}\cdot \frac{|w|^{p+1}}{r^{p-1}}\right) dt$ \\
 \hline
 $r=r_0>0$,$\downarrow$ & $\int_{t_1}^{t_2} \left(-|w_r+w_t|^2 + \frac{2}{p+1}\cdot \frac{|w|^{p+1}}{r^{p-1}}\right) dt$ &
 $\int_{t_1}^{t_2} \left(|w_r-w_t|^2 - \frac{2}{p+1}\cdot \frac{|w|^{p+1}}{r^{p-1}}\right) dt$ \\
 \hline
 $r=0$, $\downarrow$ & $-\int_{t_1}^{t_2} 1 d\mu(t)$ & $+\int_{t_1}^{t_2} 1 d\mu(t)$ \\
 \hline
 $t+r=s$, $\searrow$ & $\frac{4}{p+1} \int_{t_1}^{t_2} \frac{|w(s-t,t)|^{p+1}}{(s-t)^{p-1}} dt$ & $2\int_{t_1}^{t_2} |w_r(s\!-\!t,t)-w_t(s\!-\!t,t)|^2 dt$\\
 \hline
 $t+r=s$, $\nwarrow$ & $-\frac{4}{p+1} \int_{t_1}^{t_2} \frac{|w(s-t,t)|^{p+1}}{(s-t)^{p-1}} dt$ & $-2\int_{t_1}^{t_2} |w_r(s\!-\!t,t)-w_t(s\!-\!t,t)|^2 dt$\\
 \hline
 $t-r=\tau$, $\nearrow$ & $2\int_{t_1}^{t_2} |w_r(t\!-\!\tau,t)+w_t(t\!-\!\tau,t)|^2 dt$ & $\frac{4}{p+1} \int_{t_1}^{t_2} \frac{|w(t-\tau,t)|^{p+1}}{(t-\tau)^{p-1}} dt$\\
 \hline
 $t-r=\tau$, $\swarrow$ & $-2\int_{t_1}^{t_2} |w_r(t\!-\!\tau,t)+w_t(t\!-\!\tau,t)|^2 dt$ & $-\frac{4}{p+1} \int_{t_1}^{t_2} \frac{|w(t-\tau,t)|^{p+1}}{(t-\tau)^{p-1}}dt$\\
 \hline
\end{tabular}
\end{center}
\label{line integrals in energy flux}
\end{table}
\paragraph{Physical Meaning} The physical meaning of the flux across characteristic lines, which corresponds to light cone in $\Rm^3$, and other related terms, are given as 
\begin{itemize}
 \item The terms in the form of $\int_{t_1}^{t_2} |w_r\pm w_t|^2 dt$ are the amount of energy which moves across characteristic lines due to liner wave propagation. 
 \item The terms in the form of $\int_{t_1}^{t_2} \frac{|w|^{p+1}}{r^{p-1}} dt$ are the amount of energy which moves across characteristic lines due to nonlinear effect.
 \item The terms in the form of $\int_{t_1}^{t_2} 1 d\mu(t)$ are the amount of energy which moves across the line $r=0$, thus transforms from inward energy to outward energy during the given period of time. 
 \item The double integral in the identities is the amount of energy which transform from inward energy to outward energy due to nonlinear effect in the given space-time region.   
\end{itemize}
Now let us give a outline of proof for Proposition \ref{energy flux formula}
\begin{proof} 
 Without loss of generality let us prove the first identity in the proposition. The second one can be prove in the same way. If the region is away from the $t$-axis, then we only need to apply Green's formula on the line integrals of the given vector fields and use the equation $w_{tt}-w_{rr} = - \frac{|w|^{p-1}w}{r^{p-1}}$. In the process of calculation the second derivatives are involved, thus we apply smooth approximation techniques when necessary. Now let us consider the case when part of the boundary is on the $t$-axis. In this case a limiting process $r \rightarrow 0^+$ is required. It suffices to show there exists a nondecreasing continuous function $P(t)$ with $P(+\infty)-P(-\infty) \lesssim_p E$ so that for all $t_1<t_2$, we have 
\begin{equation} \label{def of chi}
 \lim_{r\rightarrow 0^+} \int_{t_1}^{t_2} \left(|w_r(r,t)+w_t(r,t)|^2 - \frac{2}{p+1}\cdot \frac{|w(r,t)|^{p+1}}{r^{p-1}}\right) dt = P(t_2)-P(t_1).
\end{equation} 
We first choose $\Omega = [r,1]\times [t_1,t_2]$, which is away from the $t$-axis, thus we can use formula \eqref{energy flux inward} and write
\begin{align}
 - \int_{t_1}^{t_2} \left(|w_r(r,t)+w_t(r,t)|^2  - \frac{2}{p+1}\cdot \frac{|w(r,t)|^{p+1}}{r^{p-1}}\right) dt +  E_-(t_1;r,1) & \nonumber \\
  +\int_{t_1}^{t_2} \left(|w_r(1,t)+w_t(1,t)|^2  -\frac{2}{p+1}\cdot |w(1,t)|^{p+1}\right) dt - E_-(t_2;r,1)&  \label{identity rectangle 1} \\
   -\frac{2\pi(p-1)}{p+1} \int_{t_1}^{t_2} \int_r^1 \frac{|w(r',t)|^{p+1}}{(r')^p} dr' dt & = 0 \nonumber
\end{align}
All the other four terms in the identity above converge as $r \rightarrow 0^+$, therefore we know the limit in \eqref{def of chi} converges for any time $t_1<t_2$. Let us define 
\[
 P(t) = \lim_{r\rightarrow 0^+} \int_{0}^{t} \left(|w_r(r,t')+w_t(r,t')|^2 - \frac{2}{p+1}\cdot \frac{|w(r,t')|^{p+1}}{r^{p-1}}\right) dt'.
\]
This guarantees that identity \eqref{def of chi} always holds. By \eqref{identity rectangle 1} we also have 
\begin{align}
 P(t) = & E_-(0;0,1) - E_-(t;0,1) + \int_{0}^{t} \left(|w_r(1,t')+w_t(1,t')|^2 -\frac{2}{p+1}\cdot |w(1,t')|^{p+1}\right) dt' \nonumber\\
 & -\frac{2\pi(p-1)}{p+1} \int_{0}^{t} \int_0^1 \frac{|w(r',t')|^{p+1}}{(r')^p} dr' dt'. \label{expression of P1}
\end{align}
We claim that $E_-(t;0,1) = \pi \int_0^1 \left(|w_r(r,t)+w_t(r,t)|^2 + \frac{2}{p+1}\cdot \frac{|w(r,t)|^{p+1}}{r^{p-1}}\right)dr$ is a continuous function of $t$ by the following facts:
\begin{itemize}
  \item $(u(\cdot,t),u_t(\cdot,t)) \in C(\Rm_t; \dot{H}^1 \times L^2(\Rm^3)) \Rightarrow (w_r(\cdot,t),w_t(\cdot,t)) \in C(\Rm_t; L^2 \times L^2)$;
  \item $4\pi \int_0^1 \frac{|w(r,t)|^{p+1}}{r^{p-1}} dr = \int_{B(\mathbf{0},1)} |u(x,t)|^{p+1} dx$;
  \item The Sobolev embedding $\dot{H}^1(\Rm^3) \hookrightarrow L^{p+1}(B({\mathbf 0},1))$;
\end{itemize} 
Each term in the right hand side of \eqref{expression of P1} is a continuous function of $t$, thus $P(t)$ is also a continuous function. Furthermore, we can combine the inequality $w(r,t) \lesssim_E r^{1/2}$ and Corollary \ref{cor w morawetz} to obtain
\begin{equation}
 \int_{-\infty}^{\infty} \frac{|w(r,t)|^{p+1}}{r^{p-1}} dt = \int_{-\infty}^{\infty} \frac{|w(r,t)|^{p-1} r^2 |u(r,t)|^2}{r^{p-1}} dt\lesssim_{p,E} r^{\frac{5-p}{2}} \int_{-\infty}^\infty |u(r,t)|^2 dt \lesssim_{p,E} r^{\frac{5-p}{2}}. \label{decay of w line}
\end{equation}
Thus we can redefine 
\[
 P(t) = \lim_{r\rightarrow 0^+} \int_{0}^{t} \left(|w_r(r,t')+w_t(r,t')|^2 \right) dt'.
\]
This means $P(t)$ is also a non-decreasing function with $P(0)=0$. By the monotonicity and continuity of $P(t)$ we can always define a Borel measure $\mu$ by $\mu((a,b))=P(b)-P(a)$. This enable us to rewrite \eqref{def of chi} into
\[
  \lim_{r\rightarrow 0^+} \int_{t_1}^{t_2} \left(|w_r(r,t)+w_t(r,t)|^2 - \frac{2}{p+1}\cdot \frac{|w(r,t')|^{p+1}}{r^{p-1}}\right) dt = \int_{t_1}^{t_2} 1 d\mu(t).
\]
Please note that it is unnecessary to specify whether the integral $\int_{t_1}^{t_2} 1d\mu(t)$ include the endpoints or not. Because the continuity of $P(t)$ implies $\mu(\{t_0\})=0$ for any single point set $\{t_0\}$. Finally we use the second inequality of Corollary \ref{cor w morawetz} to deduce $\mu(\Rm) \lesssim_p E$.
\end{proof}
\begin{remark}
 If the radial solution $u(x,t)$ is sufficiently smooth at the origin, then a simple calculation shows that $d\mu(t) = |u(0,t)|^2 dt$. In fact, we have 
 \begin{equation} \label{limit at origin smooth}
  \lim_{r\rightarrow 0^+} |w_r(r,t)+w_t(r,t)|^2 = \lim_{r\rightarrow 0^+} |w_r(r,t)-w_t(r,t)|^2 = |u(0,t)|^2
 \end{equation}
 If we follow the same procedure as above to prove the energy flux formula for outward energy, then the limiting process will give the same measure $\mu$ because of \eqref{limit at origin smooth} and smooth approximation techniques. Another way to prove the energy flux formula for outward energy is to write $E_+ = E-E_-$.
\end{remark}
\begin{remark}
 We can also prove $d\mu(t) = |\xi(t)|^2 dt$. Here $\xi(t) \in L^2(\Rm)$. We prefer $|\xi(t)|^2$ to a single $L^1$ function, because when $u$ is sufficiently smooth, we have $d\mu(t) = |u(0,t)|^2 dt$, as mentioned above. Since the proof uses elements from later sections, we put the proof in Appendix for convenience. 
\end{remark}
\paragraph{Notation for Flux} For convenience we write the energy flux across a characteristic line $t\pm r = \hbox{Const}$, which corresponds to a light cone in $\Rm^3$ in the following way.

\begin{definition}[Notations of Energy flux]
 Given $s,\tau \in \Rm$, we define full energy flux
\begin{align*} 
 Q_-^- (s) & =  \frac{4\pi}{p+1} \int_{-\infty}^{s} \frac{|w(s-t,t)|^{p+1}}{(s-t)^{p-1}} dt;\\
 Q_+^-(s) & = 2\pi \int_{-\infty}^{s} |w_r(s-t,t)-w_t(s-t,t)|^2 dt;\\
 Q_-^+(\tau) & = 2\pi \int_{\tau}^{+\infty} |w_r(t-\tau,t)+w_t(t-\tau,t)|^2 dt;\\
 Q_+^+(\tau) & = \frac{4\pi}{p+1} \int_{\tau}^{+\infty} \frac{|w(t-\tau,t)|^{p+1}}{(t-\tau)^{p-1}} dt.
\end{align*}
A negative sign upper index means that this is energy flux across the characteristic line $t+r = s$; otherwise this is energy flux across the characteristic line $t-r = \tau$. The lower index indicates whether this is energy flux of inward energy ($-$) or outward energy ($+$). We can also consider their truncated version, which is the energy flux across a line segment of a characteristic line.  
\begin{align*} 
 Q_-^- (s; t_1,t_2) & =  \frac{4\pi}{p+1} \int_{t_1}^{t_2} \frac{|w(s-t,t)|^{p+1}}{(s-t)^{p-1}} dt,& &t_1<t_2\leq s;\\
 Q_+^-(s; t_1,t_2) & = 2\pi \int_{t_1}^{t_2} |w_r(s-t,t)-w_t(s-t,t)|^2 dt,& &t_1<t_2\leq s; \\
 Q_-^+(\tau; t_1,t_2) & = 2\pi \int_{t_1}^{t_2} |w_r(t-\tau,t)+w_t(t-\tau,t)|^2 dt,& &\tau\leq t_1<t_2;\\
 Q_+^+(\tau; t_1,t_2) & = \frac{4\pi}{p+1} \int_{t_1}^{t_2} \frac{|w(t-\tau,t)|^{p+1}}{(t-\tau)^{p-1}} dt,& &\tau\leq t_1<t_2.
\end{align*}
\end{definition}
\subsection{Energy Flux Formula for Triangle}
We can apply our general energy flux formula for a triangle region. This will be frequently used in Section \ref{sec:distribution}. 
\begin{proposition}[Triangle Law] \label{triangle law}
Given any $s_0>t_0$, we can define $\Omega = \{(r,t):t>t_0, r>0, r+t<s_0\}$ and write
\[
 E_-(t_0;0,s_0-t_0) = \pi \int_{t_0}^{s_0} 1\mu(t) + Q_-^-(s_0;t_0,s_0) + \frac{2\pi(p-1)}{p+1} \iint_{\Omega} \frac{|w(r,t)|^{p+1}}{{r}^p} dr dt.
\]
 We can substitute $s_0$ by $t_0+r_0$ with $r_0>0$ and rewrite this into the form
\[
 E_-(t_0;0,r_0) = \pi \int_{t_0}^{t_0+r_0} 1\mu(t) + Q_-^-(t_0+r_0;t_0,t_0+r_0) + \frac{2\pi(p-1)}{p+1} \iint_{\Omega} \frac{|w(r,t)|^{p+1}}{{r}^p} dr dt.
\]
\end{proposition}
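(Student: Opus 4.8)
The plan is to apply the General Energy Flux Formula (Proposition \ref{energy flux formula}) directly to the triangular region $\Omega = \{(r,t): t > t_0,\ r > 0,\ r + t < s_0\}$, part of whose boundary lies on the $t$-axis, so that the ``Furthermore'' part of that proposition is exactly what is needed. The vertices of $\Omega$ are $(0, t_0)$, $(s_0 - t_0, t_0)$ and $(0, s_0)$; traversing the boundary $\Gamma$ counterclockwise we meet, in order, (i) the bottom edge $t = t_0$, $0 \le r \le s_0 - t_0$, traversed in the $\rightarrow$ direction; (ii) the hypotenuse lying on the characteristic line $t + r = s_0$, traversed in the $\nwarrow$ direction as $t$ runs from $t_0$ up to $s_0$; and (iii) the segment of the $t$-axis from $(0, s_0)$ down to $(0, t_0)$, traversed in the $\downarrow$ direction.

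First I would read off the three line integrals from the inward-energy column of Table \ref{line integrals in energy flux}, using identity \eqref{energy flux inward}. After the global factor $\pi$ in \eqref{energy flux inward} is included: the bottom edge contributes exactly $E_-(t_0; 0, s_0 - t_0)$ by Definition \ref{energies}; the hypotenuse ($t+r=s_0$, $\nwarrow$, with $t_1=t_0$, $t_2=s_0$) contributes $-Q_-^-(s_0; t_0, s_0)$; and the $t$-axis segment ($r=0$, $\downarrow$, from $(0,s_0)$ to $(0,t_0)$) contributes $-\pi \int_{t_0}^{s_0} 1\, d\mu(t)$. Adding the double-integral term $-\frac{2\pi(p-1)}{p+1}\iint_\Omega \frac{|w|^{p+1}}{r^p}\, dr\, dt$ from \eqref{energy flux inward} and equating the total to zero, a one-line rearrangement yields the claimed identity. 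Substituting $s_0 = t_0 + r_0$ with $r_0 > 0$ gives the second displayed form.

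The only points requiring attention are the bookkeeping of orientations and signs in the table, and the finiteness of each term so that the rearrangement is legitimate. The quantity $E_-(t_0; 0, s_0 - t_0)$ is finite because $u$ has finite energy; the double integral is dominated by $\int_{-\infty}^{\infty}\int_0^\infty \frac{|w|^{p+1}}{r^p}\, dr\, dt \lesssim_p E$ from Corollary \ref{cor w morawetz}; and $Q_-^-(s_0; t_0, s_0)$ converges as an improper integral since, near $t = s_0$ (where $r = s_0 - t \to 0^+$), the pointwise bound $|w(r,t)| \lesssim_1 E^{1/2} r^{1/2}$ makes its integrand $\lesssim r^{(3-p)/2}$ with $(3-p)/2 > -1$ for $3 \le p < 5$; alternatively one simply invokes the global bound $\int_{-\infty}^{s} \frac{|w(s-t,t)|^{p+1}}{(s-t)^{p-1}}\, dt \lesssim_p E$ from part (c). I do not expect any genuine obstacle: the statement is a specialization of Proposition \ref{energy flux formula} to a convenient region, and the real work — Green's theorem, the smooth approximation, and the construction of the measure $\mu$ — was already carried out in establishing the general flux identity.
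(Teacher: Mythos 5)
Your proof is correct and is exactly the argument the paper intends: the Triangle Law is stated as a direct application of the general energy flux formula (Proposition \ref{energy flux formula}) to the triangular region, with the three boundary contributions read off from Table \ref{line integrals in energy flux} precisely as you do, including the sign conventions for the $\nwarrow$ hypotenuse and the downward $t$-axis segment. Your added remarks on finiteness of $Q_-^-(s_0;t_0,s_0)$ near the corner $r\to 0^+$ and of the double integral are sound and only supplement what the paper leaves implicit.
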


\section{Energy Distribution of Solutions} \label{sec:distribution}

\subsection{Limits of Inward and Outward Energies}

All inward and outward energies are clearly bounded above by the full energy $E$, since $E= E_-+E_+$. This immediately gives 
\begin{lemma}[Boundedness of Energy Flux] \label{bounded of flux}
 We have a uniform upper bound for all $s, \tau \in \Rm$,
 \begin{align*}
  Q_{-}^{-}(s), Q_+^+ (\tau), \pi \int_{-\infty}^{+\infty} 1 d\mu(t) \leq E.
 \end{align*}
\end{lemma}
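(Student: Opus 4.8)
The plan is to read all three bounds directly off the energy flux identities of Proposition~\ref{energy flux formula}, exploiting only the fact that every term on the right-hand side of such an identity is nonnegative. The starting point is the observation that $E = E_-(t) + E_+(t)$ for every $t$: since $|w_r+w_t|^2 + |w_r-w_t|^2 = 2|w_r|^2 + 2|w_t|^2$, adding the two expressions in Definition~\ref{energies} reconstitutes $E(w,w_t) = E$. In particular $E_\pm(t) \le E$, and because all integrands are nonnegative, every truncated energy $E_\pm(t;r_1,r_2)$ is also $\le E$.

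First I would bound $Q_-^-(s)$ and the total mass of $\mu$. Fix $s \in \Rm$ and any $t_0 < s$ and apply the Triangle Law (Proposition~\ref{triangle law}) with $r_0 = s - t_0$:
\[
 E_-(t_0; 0, s-t_0) = \pi \int_{t_0}^{s} 1\,d\mu(t) + Q_-^-(s; t_0, s) + \frac{2\pi(p-1)}{p+1}\iint_\Omega \frac{|w(r,t)|^{p+1}}{r^p}\,dr\,dt .
\]
All three terms on the right are nonnegative, so in particular $Q_-^-(s; t_0, s) \le E_-(t_0; 0, s-t_0) \le E$ and $\pi\int_{t_0}^{s} 1\,d\mu(t) \le E$. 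Letting $t_0 \to -\infty$, the truncated flux $Q_-^-(s;t_0,s)$ increases to $Q_-^-(s)$, so monotone convergence gives $Q_-^-(s) \le E$; letting both $t_0 \to -\infty$ and $s \to +\infty$ in the second inequality gives $\pi \int_{-\infty}^{+\infty} 1\,d\mu(t) \le E$.

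The bound on $Q_+^+(\tau)$ is the mirror image. Fixing $\tau$ and $t_0 > \tau$, I would apply identity~\eqref{energy flux outward} to the triangle $\Omega = \{(r,t): \tau < t < t_0,\ 0 < r < t-\tau\}$, whose counterclockwise boundary consists of the characteristic segment $t - r = \tau$ traversed $\nearrow$, a horizontal segment at height $t_0$ traversed $\leftarrow$, and the segment of the $t$-axis traversed $\downarrow$; reading the corresponding contributions off Table~\ref{line integrals in energy flux} (outward-energy column) yields the ``outward triangle law''
\[
 E_+(t_0; 0, t_0 - \tau) = \pi \int_{\tau}^{t_0} 1\,d\mu(t) + Q_+^+(\tau; \tau, t_0) + \frac{2\pi(p-1)}{p+1}\iint_\Omega \frac{|w(r,t)|^{p+1}}{r^p}\,dr\,dt .
\]
Again every term on the right is nonnegative, hence $Q_+^+(\tau; \tau, t_0) \le E_+(t_0; 0, t_0 - \tau) \le E$, and letting $t_0 \to +\infty$ (monotone convergence) gives $Q_+^+(\tau) \le E$.

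There is essentially no obstacle here — this is precisely the ``clearly bounded above by $E$'' remark preceding the lemma. The only two points requiring a word of care are: (i) Proposition~\ref{triangle law} is stated only for the inward energy, so the $Q_+^+$ bound needs the same Green's-theorem computation run with the outward vector field, i.e.\ identity~\eqref{energy flux outward} on the triangle above; and (ii) the passage to the limit, which is immediate from the monotone convergence theorem since all the relevant truncated quantities are monotone in their time endpoints and uniformly dominated by $E$.
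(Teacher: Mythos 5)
Your proposal is correct and follows essentially the same route as the paper: apply the triangle law, use nonnegativity of every term, and pass to the limits $t_0\to-\infty$, $s\to+\infty$ (the paper treats $Q_+^+$ by the remark that it "can be dealt with in the same way," which is exactly the mirrored outward-triangle identity you write out explicitly). The only difference is presentational — you verify the outward triangle law from identity \eqref{energy flux outward} and Table \ref{line integrals in energy flux} rather than leaving it implicit — which is a harmless elaboration, not a new argument.
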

\begin{proof}
We apply triangle law (Proposition \ref{triangle law}) on the region $\Omega(s,t_0) = \{(r,t): r+t\leq s, r>0, t>t_0\}$ for any $t_0<s$ and obtain
\[
 E_-(t_0;0,s-t_0) = \pi \int_{t_0}^s 1 d\mu(t) + Q_-^-(s;t_0,s) + \frac{2(p-1)\pi}{p+1} \int_{\Omega(s,t_0)} \frac{|w(r,t)|^{p+1}}{r^p} dr dt,
\]
as shown in figure \ref{figure finiteQ}. Making $t_0 \rightarrow -\infty$ we have 
\[
 \pi \int_{-\infty}^s 1 d\mu(t) + Q_-^-(s) + \frac{2(p-1)\pi}{p+1} \iint_{r>0, r+t\leq s} \frac{|w(r,t)|^{p+1}}{r^p} dr dt \leq E.
\]
Thus we have $Q_-^-(s) \leq E$ for any $s \in \Rm$. Finally we let $s \rightarrow +\infty$ in the inequality above to obtain 
$\pi \int_{-\infty}^{\infty} 1 d\mu(t) \leq E$. The outward energy flux $Q_+^+$ can be dealt with in the same way.
\end{proof}

 \begin{figure}[h]
 \centering
 \includegraphics[scale=0.9]{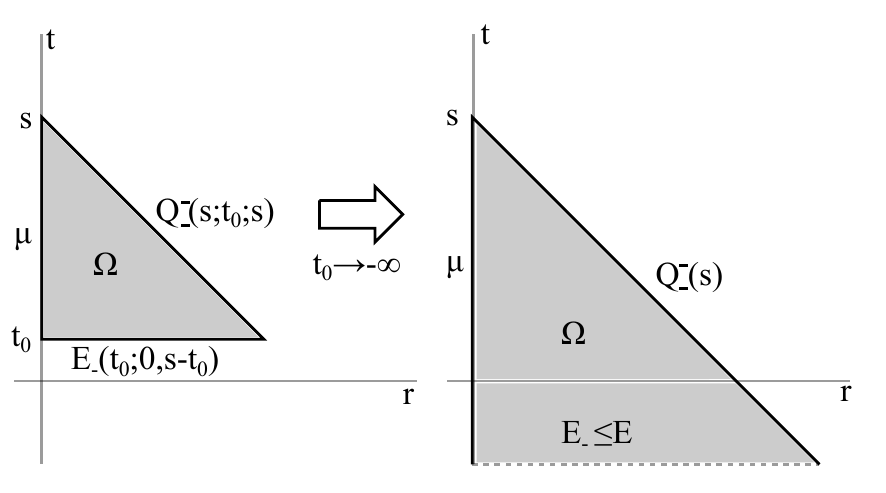}
 \caption{Illustration for proof of Lemma \ref{bounded of flux}} \label{figure finiteQ}
\end{figure}
\noindent Before we consider the monotonicity and asymptotic behaviour of inward and outward energy as $t \rightarrow \pm \infty$, let us first give a technical lemma.

\begin{lemma} \label{lift of r}
 Given any $t_0\in \Rm$, we have
 \[
  \liminf_{r\rightarrow +\infty} Q_-^-(t_0+r,t_0,t_0+r) = 0.
 \]
\end{lemma}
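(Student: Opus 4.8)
The plan is to use the triangle law (Proposition \ref{triangle law}) together with the finiteness of the various global space-time integrals coming from the Morawetz estimate (Corollary \ref{cor w morawetz}). Recall that for $s_0 = t_0 + r_0$ with $r_0 > 0$ we have
\[
 E_-(t_0;0,r_0) = \pi \int_{t_0}^{t_0+r_0} 1\, d\mu(t) + Q_-^-(t_0+r_0;t_0,t_0+r_0) + \frac{2\pi(p-1)}{p+1} \iint_{\Omega_{r_0}} \frac{|w(r,t)|^{p+1}}{r^p}\, dr\, dt,
\]
where $\Omega_{r_0} = \{(r,t): t > t_0,\, r>0,\, r+t < t_0+r_0\}$. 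The left-hand side is bounded by $E$ uniformly in $r_0$, and the three terms on the right are all nonnegative. The first term $\pi\int_{t_0}^{t_0+r_0} 1\,d\mu(t)$ is nondecreasing in $r_0$ and bounded by $E$, so it converges as $r_0 \to +\infty$; in particular its increments over disjoint unit intervals tend to zero. Likewise the double integral over $\Omega_{r_0}$ is nondecreasing and bounded (by Lemma \ref{bounded of flux}, or directly since it is a piece of the finite integral $\iint_{r>0} |w|^{p+1}/r^p$), hence also converges as $r_0 \to +\infty$.

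From these two observations the idea is as follows. Suppose for contradiction that $\liminf_{r\to+\infty} Q_-^-(t_0+r;t_0,t_0+r) = 2\delta > 0$, so that $Q_-^-(t_0+r;t_0,t_0+r) \geq \delta$ for all sufficiently large $r$, say $r \geq r_*$. I would like to sum this lower bound over a sequence of radii to contradict the boundedness of $E_-(t_0;0,r_0) \leq E$. The difficulty is that the triangle law does not directly give additivity of the flux term $Q_-^-(\cdot;t_0,\cdot)$ in $r_0$, since the characteristic line of integration changes with $r_0$. So instead I would compare the triangle identity at two radii $r_0 < r_0'$: subtracting, the left-hand sides differ by at most $E$, the $\mu$-terms and the double-integral terms each change by a quantity that tends to zero as $r_0,r_0' \to \infty$ with $r_0 < r_0' \le r_0 + 1$ (say), and therefore $Q_-^-(t_0+r_0';t_0,t_0+r_0') - Q_-^-(t_0+r_0;t_0,t_0+r_0)$ is controlled. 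This by itself is not yet a contradiction; the cleaner route is to note directly that the triangle law gives, for every $r_0 > 0$,
\[
 Q_-^-(t_0+r_0;t_0,t_0+r_0) = E_-(t_0;0,r_0) - \pi \int_{t_0}^{t_0+r_0} 1\, d\mu(t) - \frac{2\pi(p-1)}{p+1} \iint_{\Omega_{r_0}} \frac{|w(r,t)|^{p+1}}{r^p}\, dr\, dt,
\]
and that, as $r_0 \to +\infty$, $E_-(t_0;0,r_0) \to E_-(t_0)$ while the $\mu$-integral increases to $\pi\int_{t_0}^\infty d\mu$ and the double integral increases to $\iint_{r+t < \text{(anything)}}$... — here one must be careful, because $\Omega_{r_0}$ exhausts the half-plane $\{t>t_0\}$, not a bounded region. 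This suggests the right comparison is with the full quantity $E_-(t_0) = \pi\int_{t_0}^\infty d\mu + Q_-^-(\cdot) $-type expansion, but $Q_-^-(s) \to 0$ is exactly what related lemmas later establish; so for the present lemma the honest argument is the unit-interval one.

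Concretely: pick a large $R_0$ so that for $r_0 \ge R_0$ both $\pi\int_{t_0+r_0}^{t_0+r_0+1} d\mu(t) < \delta/4$ and $\frac{2\pi(p-1)}{p+1}\iint_{\Omega_{r_0+1}\setminus\Omega_{r_0}} |w|^{p+1}/r^p < \delta/4$ (possible since both are tails/increments of convergent monotone quantities). Now apply the triangle law at $r_0$ and at $r_0+1$ and subtract: $E_-(t_0;0,r_0+1) - E_-(t_0;0,r_0) \ge 0$, so
\[
 Q_-^-(t_0+r_0+1;t_0,t_0+r_0+1) + \big[\text{small }\mu\text{ increment}\big] + \big[\text{small double-integral increment}\big] \ge Q_-^-(t_0+r_0;t_0,t_0+r_0),
\]
wait — this goes the wrong way; the monotonicity of $E_-(t_0;0,r_0)$ in $r_0$ is what I need to exploit, together with $E_-(t_0;0,r_0) \le E$. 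The clean finish: since $E_-(t_0;0,r_0) \le E$ for all $r_0$ and the $\mu$-term and double-integral term are nonnegative and nondecreasing, the triangle identity forces $Q_-^-(t_0+r_0;t_0,t_0+r_0) \le E - \pi\int_{t_0}^{t_0+r_0}d\mu - \frac{2\pi(p-1)}{p+1}\iint_{\Omega_{r_0}}|w|^{p+1}/r^p$; but if this flux had a positive liminf $2\delta$, then combining with the lower bound on the double integral over any unit slab — namely that $\iint_{\Omega_{r_0+1}\setminus\Omega_{r_0}}|w|^{p+1}/r^p \to 0$ yet summing infinitely many slabs diverges unless... — the cleanest contradiction is actually: $Q_-^-$ over the \emph{full} line $\int_{-\infty}^{t_0+r}$ is bounded (Lemma \ref{bounded of flux}), and $Q_-^-(t_0+r;t_0,t_0+r) \le Q_-^-(t_0+r)$, but that gives a bound, not decay. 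So the genuine mechanism must be: the double-integral term over $\Omega_{r_0}$ converges, hence its increments over the triangular annuli $\Omega_{r_0+1}\setminus\Omega_{r_0}$ tend to $0$, and along such an annulus the integrand $|w|^{p+1}/r^p$ near the characteristic $r+t = t_0+r_0$ controls, via the decay estimate \eqref{decay of w line} and a Cauchy–Schwarz in $t$, the flux $Q_-^-(t_0+r_0;t_0,t_0+r_0) = \frac{4\pi}{p+1}\int_{t_0}^{t_0+r_0} |w(s-t,t)|^{p+1}/(s-t)^{p-1}\,dt$ for a suitable choice of $s = t_0+r_0$ within the annulus.

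I expect the main obstacle to be precisely this last point: the triangle law controls the \emph{region} integral and the \emph{$\mu$-boundary}, but the flux term appears as an \emph{equality}, so to get it to be small one must show the other two pieces between radius $r_0$ and a slightly larger radius eat up essentially all the available budget. The right statement to extract is: since $\iint_{r>0,\,r+t<t_0+r}|w|^{p+1}/r^p\,dr\,dt$ is finite (it is dominated by $\iint_{r>0}|w|^{p+1}/r^p < \infty$ from Corollary \ref{cor w morawetz}), for any sequence $r_n \to \infty$ the integrals over the annuli $\{t_0 + r_n < r+t < t_0 + r_n + L_n\}$ with $L_n$ slowly growing tend to $0$; then for each $n$ one selects $s_n = t_0 + \rho_n$ with $\rho_n \in (r_n, r_n + L_n)$ by the mean value / pigeonhole principle so that the slice integral $\int_{t_0}^{s_n}|w(s_n - t,t)|^{p+1}/(s_n-t)^{p-1}\,dt \le \frac{1}{L_n}\iint_{\text{annulus}}(\cdots) \cdot (\text{a factor of }s_n - t\text{ vs } (s_n-t)^p)$. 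One has to check the Jacobian bookkeeping between $\frac{|w|^{p+1}}{r^p}\,dr\,dt$ (region) and $\frac{|w|^{p+1}}{(s-t)^{p-1}}\,dt$ (characteristic slice) works out — it does, since integrating the region density in the $r$-direction across a slab of width $L_n$ and using $r \ge r_n$ recovers the slice density up to the factor $1/r_n$ which only helps — and then $Q_-^-(s_n; t_0, s_n) \to 0$ along this subsequence, which is exactly $\liminf_{r\to\infty} Q_-^-(t_0+r;t_0,t_0+r) = 0$. So the hard part is the change-of-variables/pigeonhole extraction of a good radius; everything else is monotone convergence and Fubini.
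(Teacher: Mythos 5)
The mechanism you finally settle on---averaging the flux over a range of radii, changing variables so that the averaged flux becomes a space-time integral near the characteristic, invoking the finiteness of $\iint |w|^{p+1}/r^p\,dr\,dt$ from Corollary \ref{cor w morawetz}, and extracting a good radius by the mean value/pigeonhole principle---is exactly the paper's proof of Lemma \ref{lift of r}; the long triangle-law detour in your first half is a dead end that you yourself abandon, so the real content is your last paragraph. However, the quantitative bookkeeping there is wrong, and as written the argument does not close. On the slab $\{(\bar r,t): t>t_0,\ \bar r>0,\ t_0+r_n<t+\bar r<t_0+r_n+L_n\}$ the radial variable is \emph{not} bounded below by $r_n$: it runs all the way down to $0$ (near $t\approx t_0+r_n+L_n$) and up to about $r_n+L_n$ (near $t\approx t_0$). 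Moreover the flux density $|w|^{p+1}/(s-t)^{p-1}$ equals $\bar r$ \emph{times} the region density $|w|^{p+1}/\bar r^{\,p}$, with $\bar r=s-t$, so the conversion factor is an unfavorable factor of size up to $r_n+L_n$, not a helpful factor $1/r_n$ as you claim. Averaging over the slab therefore only gives, for some $\rho\in[r_n,r_n+L_n]$,
\[
Q_-^-(t_0+\rho;t_0,t_0+\rho)\ \le\ \frac{4\pi\,(r_n+L_n)}{(p+1)\,L_n}\iint_{\Omega_n}\frac{|w(\bar r,t)|^{p+1}}{\bar r^{\,p}}\,d\bar r\,dt,
\]
where $\Omega_n$ is the slab. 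With your choice of ``slowly growing'' $L_n$ the prefactor $(r_n+L_n)/L_n\sim r_n/L_n$ blows up, while the slab integral is only known to tend to zero with no rate (it is a tail of a convergent integral), so no conclusion follows.

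The repair is precisely the paper's choice of width: take $L_n$ comparable to $r_n$, e.g.\ average over $r\in[r_0,2r_0]$. Then $\bar r\le 2r_0$ on the slab, the prefactor is bounded by $8\pi/(p+1)$ independently of $r_0$, and the fact that $\iint_{\Omega(r_0)}|w|^{p+1}/\bar r^{\,p}\,d\bar r\,dt\to 0$ as $r_0\to\infty$ (the regions escape to infinity inside a set of finite integral) immediately yields radii $\rho\in[r_0,2r_0]$ along which the flux tends to $0$, which is the desired $\liminf$. So your idea is the right one, but the density conversion and the width of the annulus must be handled as above for the pigeonhole step to produce anything.
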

\begin{proof}
Fix $r_0>0$. We consider the following integral and apply the change of variable $\bar{r} = t_0+r-t$
\begin{align*}
 \int_{r_0}^{2r_0} Q_{-}^{-} (t_0+r,t_0,t_0+r) dr & = \frac{4\pi}{p+1} \int_{r_0}^{2r_0} \int_{t_0}^{t_0+r} \frac{|w(t_0+r-t,t)|^{p+1}}{(t_0+r-t)^{p-1}} dt dr \\
 & = \frac{4\pi}{p+1} \iint_{\Omega(r_0)} \frac{|w(\bar{r},t)|^{p+1}}{\bar{r}^{p-1}} d\bar{r} dt\\
 & \leq \frac{8\pi r_0}{p+1} \iint_{\Omega(r_0)} \frac{|w(\bar{r},t)|^{p+1}}{\bar{r}^p} d\bar{r} dt
 \end{align*}
 Here $\Omega(r_0) = \{(\bar{r},t):\bar{r}>0,t>t_0,t_0+r_0<t+\bar{r}<t_0+2r_0\} \subset [0,2r_0]\times \Rm$, as shown in figure \ref{upperQ}.
 \begin{figure}[h]
 \centering
 \includegraphics[scale=0.85]{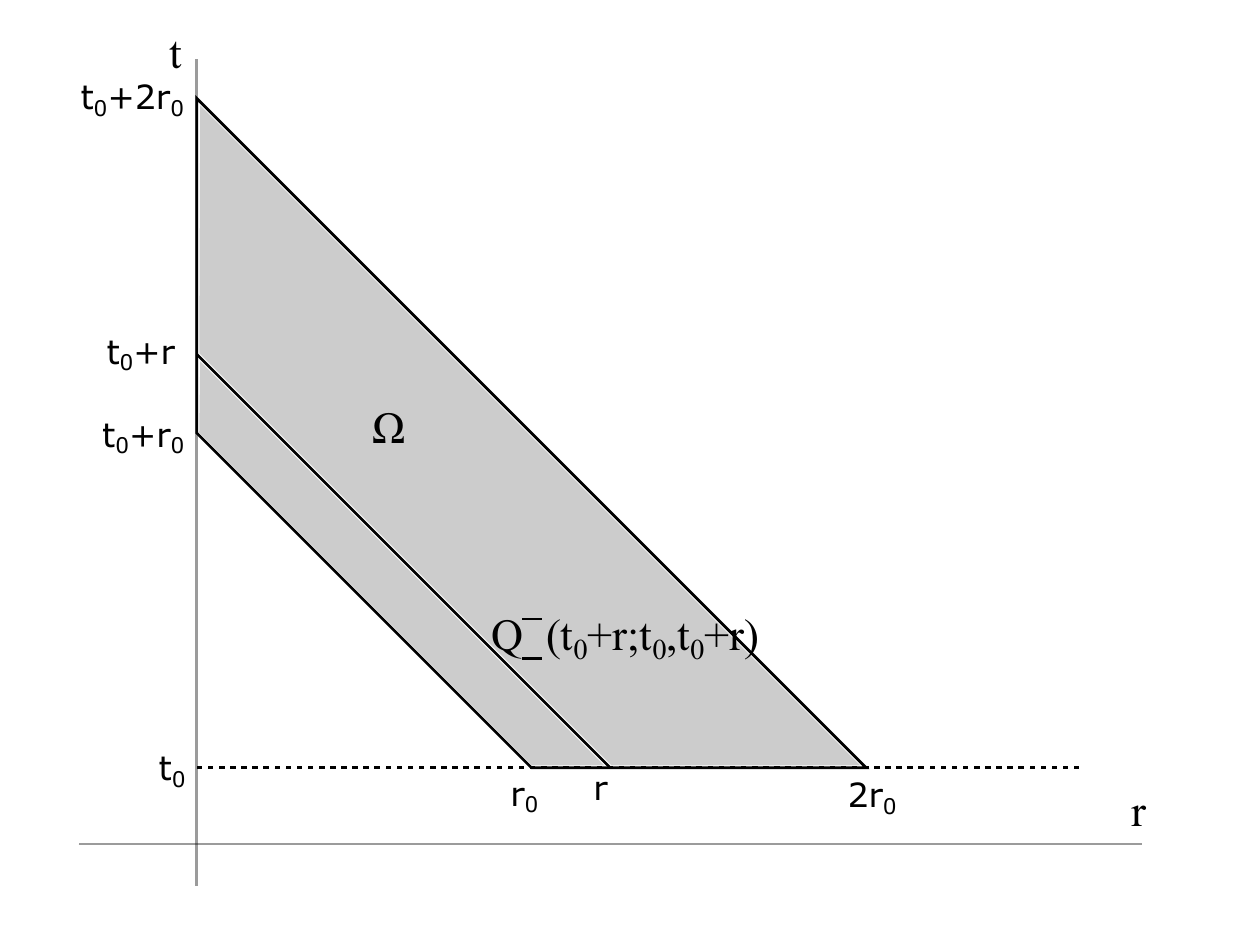}
 \caption{Illustration of integral region} \label{upperQ}
\end{figure}
Now we are able to apply the mean value theorem to conclude there exists a number $r \in [r_0,2r_0]$ so that
\[
 Q_{-}^{-} (t_0+r,t_0,t_0+r) \leq \frac{8\pi}{p+1} \iint_{\Omega(r_0)} \frac{|w(\bar{r},t)|^{p+1}}{\bar{r}^p} d\bar{r} dt.
\]
If we make $r_0 \rightarrow +\infty$ in the argument above and observe the fact
\[
 \lim_{r_0 \rightarrow + \infty} \iint_{\Omega(r_0)} \frac{|w(\bar{r},t)|^{p+1}}{\bar{r}^p} d\bar{r} dt = 0,
\] 
we obtain the lower limit as desired.
\end{proof}

\noindent Now we have 

\begin{proposition}[Monotonicity of Inward and Outward Energies] \label{monotonicity of energies}
 The inward energy $E_-(t)$ is a decreasing function of $t$, while the outward energy $E_+(t)$ is an increasing function of $t$. In addition 
\begin{align*}
 &\lim_{t\rightarrow +\infty} E_-(t) = 0; & &\lim_{t\rightarrow -\infty} E_+(t) = 0.&
\end{align*}
\end{proposition}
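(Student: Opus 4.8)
The plan is to use the Triangle Law (Proposition~\ref{triangle law}) together with the boundedness of energy flux (Lemma~\ref{bounded of flux}) and the technical Lemma~\ref{lift of r} to control $E_-(t)$ as $t$ grows. First I would establish monotonicity: fix $t_0<t_1$ and apply the energy flux formula \eqref{energy flux inward} on the trapezoidal region bounded below by $\{t=t_0\}$, above by $\{t=t_1\}$, on the right by a characteristic line $t+r=s$ (with $s$ large), and on the left by the $t$-axis. All the boundary contributions other than the two horizontal ones are nonnegative: the double integral $\frac{2\pi(p-1)}{p+1}\iint_\Omega |w|^{p+1}/r^p\,drdt$ is nonnegative, the $t$-axis term $\pi\int_{t_0}^{t_1} 1\,d\mu(t)$ is nonnegative since $\mu\geq 0$, and the characteristic flux $Q_-^-(s;t_0,s)$ type terms are nonnegative. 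Comparing the two horizontal pieces $E_-(t_0;0,s-t_0)$ and $E_-(t_1;0,s-t_1)$ and then letting $s\to+\infty$ (using Lemma~\ref{lift of r} to kill the characteristic-flux boundary term along a suitable sequence $r\to\infty$, so the remainder is $\leq E_-(t_0)$) should yield $E_-(t_1)\leq E_-(t_0)$. The analogous argument with \eqref{energy flux outward}, or simply writing $E_+=E-E_-$ and invoking the energy conservation law, gives that $E_+(t)$ is increasing.

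For the limits, consider $\lim_{t\to+\infty}E_-(t)$, which exists by monotonicity and nonnegativity; call it $\ell\geq 0$. I would apply the Triangle Law on the region $\Omega=\{(r,t): t>t_0,\ r>0,\ r+t<t_0+r_0\}$, which gives
\[
 E_-(t_0;0,r_0) = \pi\int_{t_0}^{t_0+r_0} 1\,d\mu(t) + Q_-^-(t_0+r_0;t_0,t_0+r_0) + \frac{2\pi(p-1)}{p+1}\iint_\Omega \frac{|w|^{p+1}}{r^p}\,drdt.
\]
The strategy is to send $t_0\to+\infty$. The measure term tends to $0$ because $\mu(\mathbb{R})\lesssim_p E<\infty$ forces $\mu((t_0,\infty))\to 0$; the double-integral term tends to $0$ because $\iint_{r>0}|w|^{p+1}/r^p\,drdt\lesssim_p E<\infty$ by Corollary~\ref{cor w morawetz}, so its tail over $t>t_0$ vanishes; and the characteristic-flux term $Q_-^-(t_0+r_0;t_0,t_0+r_0)$ can be made small by choosing $r_0$ appropriately via Lemma~\ref{lift of r}, or absorbed since $Q_-^- \leq E$ globally and the relevant tail shrinks. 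Choosing $r_0=r_0(t_0)\to\infty$ so that $E_-(t_0;0,r_0)\to\ell$ (legitimate since $E_-(t_0;0,r_0)\uparrow E_-(t_0)$ as $r_0\to\infty$ and $E_-(t_0)\to\ell$), one concludes $\ell=0$. The case $\lim_{t\to-\infty}E_+(t)=0$ is symmetric, using \eqref{energy flux outward} and $Q_+^+$.

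The main obstacle I anticipate is the bookkeeping in the double limit: one must simultaneously send $t_0\to+\infty$ and choose $r_0$ large, and make sure that the characteristic-flux boundary term $Q_-^-(t_0+r_0;\cdot,\cdot)$ does not spoil the estimate — it is bounded by $E$ but not obviously small. The clean way around this is to integrate in $r_0$ over a dyadic window and invoke the mean value theorem exactly as in the proof of Lemma~\ref{lift of r}, so that for each $t_0$ one can select $r_0\in[\rho,2\rho]$ with the flux term bounded by $\frac{8\pi}{p+1}\iint_{\Omega(\rho)}|w|^{p+1}/r^p\,drdt$, a quantity that is small once $\rho$ is large (uniformly enough in $t_0$, since it is a tail of a convergent integral). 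Combining this uniform smallness with the vanishing of the $\mu$-tail and the double-integral tail as $t_0\to+\infty$ closes the argument. A secondary technical point is justifying that $E_-(t_0;0,r_0)\to E_-(t_0)$ and is continuous/monotone in $r_0$, which follows from the definition of $E_-$ as an integral of a nonnegative integrand over $[0,r_0]$.
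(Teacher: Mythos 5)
Your proposal is correct and follows essentially the same route as the paper: monotonicity from the inward flux identity on the trapezoid (the paper additionally kills the flux term $Q_-^-(s;t_1,t_2)$ as $s\to\infty$ via the pointwise bound of Lemma \ref{best upper bound pointwise} to get an exact identity, whereas you simply discard the nonnegative terms, which suffices), and the vanishing of $E_-(t)$ as $t\to+\infty$ from the triangle law combined with Lemma \ref{lift of r}, the finiteness of $\mu(\Rm)$, and the convergent double integral of Corollary \ref{cor w morawetz}. Your handling of the double limit (mean-value selection of $r_0$ with the flux bounded by the tail over $\{t>t_0\}$) is a valid rendering of the paper's step of first fixing $t_0$, passing to the liminf in $r_0$ to get the inequality \eqref{expression of E neg temp}, and then sending $t_0\to+\infty$.
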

\begin{proof}
Let us prove the monotonicity and limit of inward energy. The outward energy can be dealt with in the same way. First of all, we apply inward energy flux formula on the region $\Omega = \{(r,t): r>0,t_1<t<t_2, r+t<s\}$ for $s\geq t_2>t_1$, as shown in the upper half of figure \ref{traptri}.
\begin{align}
 & E_-(t_2; 0, s-t_2) - E_-(t_1;0,s-t_1) \nonumber \\
 & \quad = -\pi \int_{t_1}^{t_2} 1 d\mu(t) - Q_-^-(s;t_1,t_2) - \frac{2\pi (p-1)}{p+1} \iint_{\Omega} \frac{|w(r,t)|^{p+1}}{r^p} dr dt. \label{trapezoid app 1}
\end{align}
Now let us recall the inequality $|w(r,t)| \lesssim_{p,E} r^{(p-1)/(p+3)}$ given by Lemma \ref{best upper bound pointwise}. This implies $Q_-^-(s;t_1,t_2) \rightarrow 0$ as $s\rightarrow \infty$. Therefore we can make $s\rightarrow \infty$ in the identity above and obtain
\[
 E_-(t_2) - E_-(t_1) = -\pi \int_{t_1}^{t_2} 1 d\mu(t) - \frac{2\pi (p-1)}{p+1} \int_{t_1}^{t_2} \int_0^\infty \frac{|w(r,t)|^{p+1}}{r^p} dr dt < 0.
\]
This gives the monotonicity. Next we apply triangle law with $t_0\in \Rm$ and $r_0>0$, as shown in the lower part of figure \ref{traptri}
\begin{align*}
 E_-(t_0;0,r_0) & = \pi \int_{t_0}^{t_0+r_0} 1 d\mu(t) + Q_-^-(t_0+r_0; t_0, t_0+r_0) \\
 & \qquad + \frac{2\pi (p-1)}{p+1} \int_{t_0}^{t_0+r_0} \int_0^{t_0+r_0-t} \frac{|w(r,t)|^{p+1}}{r^p} dr dt.
\end{align*}
According to Lemma \ref{lift of r}, we can take a limit $r_0 \rightarrow +\infty$ and write
\begin{equation}
 E_-(t_0) \leq \pi \int_{t_0}^{\infty} 1 d\mu(t) + \frac{2\pi (p-1)}{p+1} \int_{t_0}^{+\infty} \int_0^{+\infty} \frac{|w(r,t)|^{p+1}}{r^p} dr dt. \label{expression of E neg temp}
\end{equation}
Finally we can make $t_0\rightarrow +\infty$ and finish the proof.
\end{proof}

 \begin{figure}[h]
 \centering
 \includegraphics[scale=0.95]{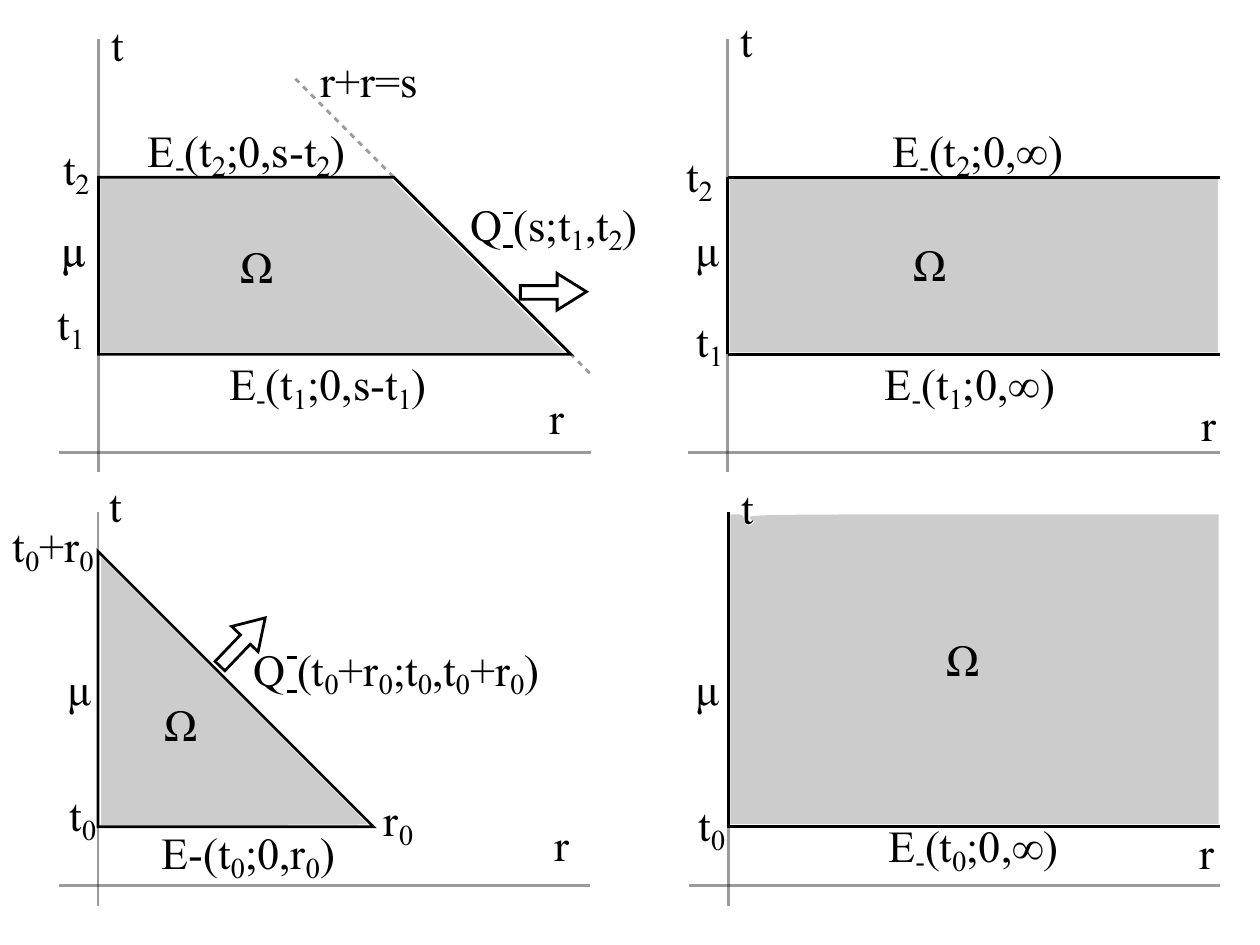}
 \caption{Illustration for proof of Proposition \ref{monotonicity of energies}} \label{traptri}
\end{figure}

\noindent This immediately gives
\begin{corollary} \label{asymptotic behaviour 2}
 We have the limits 
 \begin{align*}
  &\lim_{t\rightarrow -\infty} E_-(t) = E; & && &\lim_{t\rightarrow +\infty} E_+(t) = E.&\\
  &\lim_{t\rightarrow \pm \infty} \int_{0}^\infty \frac{|w(r,t)|^{p+1}}{r^{p-1}} dr = 0& &\Leftrightarrow &
  &\lim_{t\rightarrow \pm \infty} \int_{\Rm^3} |u(x,t)|^{p+1} dx = 0.&
 \end{align*}
\end{corollary}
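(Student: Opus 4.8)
The plan is to read off both claims from Proposition~\ref{monotonicity of energies} together with the identity
\[
  E_-(t)+E_+(t)=E \qquad\text{for every } t,
\]
which is the same relation already invoked just before Lemma~\ref{bounded of flux}. It holds because expanding $|w_r\pm w_t|^2=|w_r|^2+|w_t|^2\pm2w_rw_t$ inside Definition~\ref{energies} makes the cross terms cancel upon addition, leaving $E_-(t)+E_+(t)=2\pi\int_0^\infty\bigl(|w_r|^2+|w_t|^2+\frac{2}{p+1}\frac{|w|^{p+1}}{r^{p-1}}\bigr)\,dr=E(w,w_t)=E$, where the last two equalities are the energy conservation law for $w$ and \eqref{energy between u and w}. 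Given this, the first pair of limits is immediate: Proposition~\ref{monotonicity of energies} gives $E_+(t)\to0$ as $t\to-\infty$ and $E_-(t)\to0$ as $t\to+\infty$, hence $E_-(t)=E-E_+(t)\to E$ as $t\to-\infty$ and $E_+(t)=E-E_-(t)\to E$ as $t\to+\infty$.

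For the potential term I would use that every summand in Definition~\ref{energies} is nonnegative, so that
\[
  \frac{2\pi}{p+1}\int_0^\infty\frac{|w(r,t)|^{p+1}}{r^{p-1}}\,dr\le\min\{E_-(t),E_+(t)\}.
\]
Letting $t\to+\infty$ (using $E_-(t)\to0$) and $t\to-\infty$ (using $E_+(t)\to0$) forces $\int_0^\infty\frac{|w(r,t)|^{p+1}}{r^{p-1}}\,dr\to0$ in both time directions. The stated equivalence is then purely a change of variables $u=w/r$: since $|u(r,t)|^{p+1}=|w(r,t)|^{p+1}/r^{p+1}$, one has $\int_{\Rm^3}|u(x,t)|^{p+1}\,dx=4\pi\int_0^\infty r^2|u(r,t)|^{p+1}\,dr=4\pi\int_0^\infty\frac{|w(r,t)|^{p+1}}{r^{p-1}}\,dr$, so the two integrals agree up to the constant factor $4\pi$.

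I do not anticipate a genuine obstacle, since the corollary is a formal consequence of the preceding results. The only point that deserves a line of care is the legitimacy of the cancellation yielding $E_-(t)+E_+(t)=E(w,w_t)$: the finite-energy hypothesis, through \eqref{energy between u and w} and the finiteness of $\int_{\Rm^3}|u|^{p+1}\,dx$, guarantees that all the integrals involved converge absolutely, so the term-by-term manipulation is valid.
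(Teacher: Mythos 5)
Your proposal is correct and follows essentially the same route as the paper, which presents the corollary as an immediate consequence of Proposition \ref{monotonicity of energies} together with the decomposition $E=E_-(t)+E_+(t)$; your bound of the potential term by $\min\{E_-(t),E_+(t)\}$ and the change of variables $w=ru$ are exactly the details being left implicit there.
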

\noindent We also have asymptotic behaviour of energy flux
\begin{proposition} \label{a limit of Q}
 We have the limits
 \begin{align*}
  &\lim_{s \rightarrow +\infty} Q_-^-(s) = 0;& &\lim_{\tau \rightarrow -\infty} Q_+^+(\tau) = 0.&
 \end{align*}
\end{proposition}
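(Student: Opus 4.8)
The plan is to exploit the energy flux formula once more, now on a \emph{thin strip} between two nearby characteristic lines, rather than on a triangle. I would prove the first limit $\lim_{s\to+\infty}Q_-^-(s)=0$; the second follows by the same argument with time reversed (or by replacing inward energy with outward energy throughout). First, recall from Proposition \ref{triangle law} (the triangle law) that for each $t_0<s$ one has the identity
\[
 E_-(t_0;0,s-t_0) = \pi\int_{t_0}^{s} 1\,d\mu(t) + Q_-^-(s;t_0,s) + \frac{2\pi(p-1)}{p+1}\iint_{\Omega(s,t_0)}\frac{|w(r,t)|^{p+1}}{r^p}\,dr\,dt,
\]
where $\Omega(s,t_0)=\{(r,t):r>0,\ t>t_0,\ r+t<s\}$. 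Letting $t_0\to-\infty$ and using Lemma \ref{best upper bound pointwise} (which as in the proof of Proposition \ref{monotonicity of energies} forces $E_-(t_0;0,s-t_0)\to E_-(t_0)\to E$ in the appropriate limiting sense, or more precisely gives that the left side tends to $E$ as $t_0\to-\infty$ since almost all energy is inward energy as $t\to-\infty$ by Corollary \ref{asymptotic behaviour 2}), I obtain for every $s\in\Rm$
\[
 \pi\int_{-\infty}^{s} 1\,d\mu(t) + Q_-^-(s) + \frac{2\pi(p-1)}{p+1}\iint_{r>0,\ r+t<s}\frac{|w(r,t)|^{p+1}}{r^p}\,dr\,dt = E.
\]

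Now I would let $s\to+\infty$. The measure term $\pi\int_{-\infty}^s 1\,d\mu(t)$ increases to $\pi\int_{-\infty}^{\infty}1\,d\mu(t)=:\pi\mu(\Rm)$, and the double integral increases to $\frac{2\pi(p-1)}{p+1}\int_0^\infty\!\int_\Rm \frac{|w(r,t)|^{p+1}}{r^p}\,dr\,dt$, which is finite by the third estimate of Corollary \ref{cor w morawetz}. Since the sum of the three nonnegative terms is constantly equal to $E$ and two of them converge to finite limits as $s\to+\infty$, the third term $Q_-^-(s)$ must converge as well; call the limit $\ell\geq 0$. It then remains to identify $\ell=0$. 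For this I would argue that as $s\to+\infty$ the whole of $E$ is accounted for by the measure term plus the double integral: taking $t_0\to+\infty$ in inequality \eqref{expression of E neg temp} from the proof of Proposition \ref{monotonicity of energies} already shows $E_-(t_0)\to 0$ with $E_-(t_0)\leq \pi\int_{t_0}^\infty 1\,d\mu + \frac{2\pi(p-1)}{p+1}\int_{t_0}^\infty\!\int_0^\infty \frac{|w|^{p+1}}{r^p}$; combined with the identity displayed above and the relation $E_+(t_0)\to E$ (Corollary \ref{asymptotic behaviour 2}), a bookkeeping of which portion of $E$ sits to the left of the line $r+t=s$ versus to the right forces $Q_-^-(s)\to 0$. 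Concretely, the flux $Q_-^-(s)$ measures inward energy crossing $t+r=s$ into the region $\Omega(s,-\infty)$; but for $s$ large that inward flux is bounded by the inward energy present at any late time $t_0$ near the apex, which tends to $0$.

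The main obstacle I anticipate is the limiting argument that justifies passing $t_0\to-\infty$ in the triangle identity and concluding that the left-hand side tends to $E$ uniformly enough — i.e.\ controlling the truncated flux $Q_-^-(s;t_0,s)$ and the behavior of $E_-(t_0;0,s-t_0)$ simultaneously, since both the integration region and the base of the triangle move. The clean way around this is to avoid claiming anything uniform in $s$: fix $s$, run $t_0\to-\infty$ to get the displayed conservation identity for that fixed $s$ (this step is exactly parallel to the proof of Lemma \ref{bounded of flux} and uses only the decay $|w(r,t)|\lesssim_{p,E} r^{(p-1)/(p+3)}$ from Lemma \ref{best upper bound pointwise} to kill $Q_-^-(s;t_1,t_2)$ in the appropriate place), and only then let $s\to+\infty$ in the identity, where monotone convergence of the two remaining nonnegative terms does all the work. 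With that organization the proof is short; the only real content is the monotone-convergence squeeze together with the finiteness of $\int_0^\infty\!\int_\Rm |w|^{p+1}r^{-p}\,dr\,dt$ from Corollary \ref{cor w morawetz} and $\mu(\Rm)\lesssim_p E$ from Proposition \ref{energy flux formula}.
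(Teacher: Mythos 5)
The decisive step in your argument --- letting $t_0\to-\infty$ in the triangle law and asserting, for each fixed $s$, the identity $\pi\int_{-\infty}^{s}1\,d\mu(t)+Q_-^-(s)+\frac{2\pi(p-1)}{p+1}\iint_{r>0,\,r+t<s}\frac{|w|^{p+1}}{r^{p}}\,dr\,dt=E$ --- is not correct at this stage. Corollary \ref{asymptotic behaviour 2} gives $E_-(t_0)\to E$, but the left-hand side of the triangle law is the \emph{truncated} energy $E_-(t_0;0,s-t_0)$, and the inward energy sitting outside the backward cone, $E_-(t_0;s-t_0,\infty)$, does not vanish as $t_0\to-\infty$: it converges to $\pi\int_s^\infty|g_-(\sigma)|^2\,d\sigma$ (cf.\ Remark \ref{inside energy limit}), which is positive in general. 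This is precisely why Lemma \ref{bounded of flux} asserts only the inequality $\leq E$. Your fallback ``concrete'' justification also fails: the triangle law bounds only the truncated flux $Q_-^-(s;t_0,s)$ by $E_-(t_0;0,s-t_0)\leq E_-(t_0)$, whereas the tail $Q_-^-(s;-\infty,t_0)$ --- the flux across the far portion of the characteristic, at huge radii and very negative times --- is not controlled by late-time inward energy, nor by pointwise decay: Lemma \ref{best upper bound pointwise} only gives $|w(s-t,t)|^{p+1}(s-t)^{1-p}\lesssim_{p,E}(s-t)^{-2(p-1)/(p+3)}$, whose integral in $t$ over $(-\infty,t_0]$ diverges for $3\leq p<5$. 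Controlling this tail is the actual content of the proposition, and your proposal does not address it.

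There is also a circularity issue in your identification of the limit $\ell$: even with the correct inequality, your squeeze needs $\pi\mu(\Rm)+\frac{2\pi(p-1)}{p+1}\int_{-\infty}^{\infty}\int_0^{\infty}\frac{|w|^{p+1}}{r^{p}}\,dr\,dt=E$, which is Remark \ref{expression of E neg} and is deduced in the paper \emph{from} Proposition \ref{a limit of Q}; taking $t_0\to+\infty$ in \eqref{expression of E neg temp}, as you suggest, gives nothing beyond $E_-(t_0)\to0$. The gap is repairable within your framework: take $t_0\to-\infty$ (not $+\infty$) in \eqref{expression of E neg temp} to get $E\leq\pi\mu(\Rm)+\frac{2\pi(p-1)}{p+1}\iint\frac{|w|^{p+1}}{r^{p}}$, and then the inequality of Lemma \ref{bounded of flux} yields $Q_-^-(s)\leq E-\pi\int_{-\infty}^{s}d\mu-\frac{2\pi(p-1)}{p+1}\iint_{r+t\leq s}\frac{|w|^{p+1}}{r^{p}}\to0$ by monotone convergence. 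The paper takes a different and more direct route: it applies the flux formula on the parallelogram between the characteristics $t+r=s$ and $t+r=s'$, lets $s'\to\infty$ and $t_0\to-\infty$, and obtains $Q_-^-(s)\leq E_-(s)+\frac{2\pi(p-1)}{p+1}\iint_{t<s,\,r+t>s}\frac{|w|^{p+1}}{r^{p}}\,dr\,dt$, where both terms vanish as $s\to+\infty$ by Proposition \ref{monotonicity of energies} and dominated convergence (Corollary \ref{cor w morawetz}).
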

\begin{proof}
 Again we only give the proof for inward energy flux. An application of inward energy flux on the parallelogram $\Omega = \{(r,t): t_0<t<s, s<r+t<s'\}$ with $t_0<s<s'$ gives (Please see figure \ref{limitQ})
\begin{align}
 & E_-(s; 0, s'-s) - E_-(t_0;s-t_0,s'-t_0) \nonumber \\
 &\qquad  = Q_-^-(s;t_0,s)- Q_-^-(s';t_0,s) - \frac{2\pi (p-1)}{p+1} \int_{t_0}^s \int_{s-t}^{s'-t} \frac{|w(r,t)|^{p+1}}{r^p} dr dt. \nonumber
\end{align} 
Making $s'\rightarrow \infty$ we can discard the term $Q_-^-(s';t_0,s)$ as in the proof of Proposition \ref{monotonicity of energies} and rewrite the identity above into
\begin{align*}
 E_-(s) + \frac{2\pi (p-1)}{p+1} \int_{t_0}^s \int_{s-t}^{\infty} \frac{|w(r,t)|^{p+1}}{r^p} dr dt = E_-(t_0; s-t_0,\infty) + Q_-^-(s;t_0,s).
\end{align*}
Next we can take a limit as $t_0\rightarrow - \infty$.
\[
 E_-(s) + \frac{2\pi (p-1)}{p+1} \int_{-\infty}^s \int_{s-t}^{\infty} \frac{|w(r,t)|^{p+1}}{r^p} dr dt = \lim_{t\rightarrow -\infty} E_-(t;s-t,\infty) + Q_-^-(s).
\]
Finally we observe that both terms in left hand  converges to zero as $s \rightarrow +\infty$ and finish the proof. 
\end{proof}

 \begin{figure}[h]
 \centering
 \includegraphics[scale=1.2]{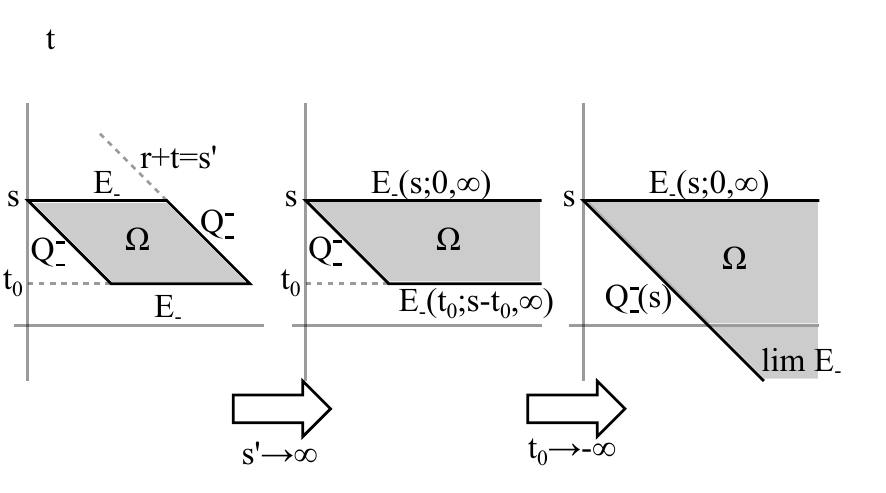}
 \caption{Illustration for proof of Proposition \ref{a limit of Q}} \label{limitQ}
\end{figure}

\begin{remark} \label{expression of E neg}
 This asymptotic behaviour of $Q_-^-$ means that the inequality \eqref{expression of E neg temp} is actually an identity as below, since now we have the limit of $Q_-^-$ vanishes rather than a lower limit.
 \begin{equation*}
 E_-(t) = \pi \int_{t}^{\infty} 1 d\mu(t') + \frac{2\pi (p-1)}{p+1} \int_{t}^{+\infty} \int_0^{+\infty} \frac{|w(r,t')|^{p+1}}{r^p} dr dt'. 
\end{equation*}
Making $t \rightarrow -\infty$ we have an identity
\begin{equation*}
 \pi \int_{-\infty}^{\infty} 1 d\mu(t) + \frac{2\pi (p-1)}{p+1} \int_{-\infty}^{+\infty} \int_0^{+\infty} \frac{|w(r,t)|^{p+1}}{r^p} dr dt = E.
\end{equation*}
This means that all the energy eventually transform from inward energy to outward energy by either passing through the origin or nonlinear effect when $t$ moves from $-\infty$ to $+\infty$.
\end{remark}

\subsection{Asymptotic Behaviour of $w_r\pm w_t$}

We can rewrite the equation $w_{tt}-w_{rr} = -\frac{|w|^{p-1}w}{r^{p-1}}$ into
\[
 \left(\partial_t -\partial_r\right)(\partial_t + \partial_r) w = - \frac{|w|^{p-1}w}{r^{p-1}}.
\]
Therefore we have 
\begin{proposition} \label{var of w partial}
Let $\tau< t_1<t_2<s$
\begin{align*}
 (w_r+w_t)(s-t_2,t_2) - (w_r+w_t)(s-t_1,t_1) & = -\int_{t_1}^{t_2} \frac{|w|^{p-1}w(s-t,t)}{(s-t)^{p-1}} dt.\\
 (w_r-w_t)(t_2-\tau,t_2) - (w_r-w_t)(t_1-\tau,t_1) & = +\int_{t_1}^{t_2} \frac{|w|^{p-1}w(t-\tau,t)}{(t-\tau)^{p-1}} dt.
\end{align*}
\end{proposition}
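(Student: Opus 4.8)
The plan is to integrate the factored equation $(\partial_t-\partial_r)(\partial_t+\partial_r)w = -|w|^{p-1}w/r^{p-1}$ along characteristic lines, using a smooth approximation argument to legitimize the second derivatives. Keeping the orderings $\tau<t_1<t_2<s$ from the statement, set $F(t)=(w_r+w_t)(s-t,t)$ and $G(t)=(w_r-w_t)(t-\tau,t)$. On $[t_1,t_2]$ the characteristic coordinate $r=s-t$ stays $\ge s-t_2>0$ and $r=t-\tau$ stays $\ge t_1-\tau>0$, so both characteristic segments are bounded away from $r=0$ and the weight $r^{-(p-1)}$ appearing in the nonlinearity is smooth and bounded there. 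By the chain rule the mixed terms $w_{rt}$ cancel, giving $F'(t)=(w_{tt}-w_{rr})(s-t,t)$ and $G'(t)=(w_{rr}-w_{tt})(t-\tau,t)$; substituting the equation yields $F'(t)=-|w(s-t,t)|^{p-1}w(s-t,t)/(s-t)^{p-1}$ and $G'(t)=+|w(t-\tau,t)|^{p-1}w(t-\tau,t)/(t-\tau)^{p-1}$, and integrating over $[t_1,t_2]$ produces the two identities.

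To justify this I would first prove it for classical solutions $w^{(n)}=ru^{(n)}$ coming from smooth, rapidly decaying radial data, where $F$ and $G$ are genuinely $C^1$ in $t$ and the computation above is valid term by term. Then I would approximate the given finite-energy data $(u_0,u_1)$ in $\dot{H}^1\times L^2(\Rm^3)$ by such data with energies $E_n\to E$ (truncation and mollification, preserving radiality); by local well-posedness in the energy space together with energy conservation, standard for $3\le p<5$ via the references cited in the introduction, one has $(u^{(n)},u^{(n)}_t)\to(u,u_t)$ in $C([t_1,t_2];\dot{H}^1\times L^2)$, hence $(w^{(n)}_r,w^{(n)}_t)\to(w_r,w_t)$ in $C([t_1,t_2];L^2\times L^2)$ by \eqref{energy transformation}. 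It then remains to pass to the limit in the identity satisfied by $w^{(n)}$.

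The only genuine difficulty is this limit, because $F$ and $G$ are \emph{restrictions of $w_r\pm w_t$ to one-dimensional characteristic curves}, whereas $w_r,w_t$ are a priori merely $L^2$ in $(r,t)$, so the restriction is not supplied by naive trace theory. This is exactly what the trace content of the energy flux formula (Proposition \ref{energy flux formula}) provides: the integrals $\int_{t_1}^{t_2}|w_r\pm w_t|^2\,dt$ along a characteristic segment are finite and bounded in terms of $E$, uniformly along the approximating family; combined with the fact that each slice $w(\cdot,t)\in\dot{H}^1$ has a ($1/2$-Hölder) continuous representative off the origin, this lets one define $F$ and $G$ for $w$ and pass to a subsequence along which $F_n\to F$ and $G_n\to G$ pointwise a.e.\ in $t$. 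For the right-hand sides, $(s-t)^{-(p-1)}$ and $(t-\tau)^{-(p-1)}$ are bounded on $[t_1,t_2]$, while the uniform pointwise bound $|w^{(n)}(r,t)|\lesssim_p E_n^{2/(p+3)}r^{(p-1)/(p+3)}$ of Lemma \ref{best upper bound pointwise} (with $E_n$ bounded) furnishes a dominating function, so dominated convergence applies. Letting $n\to\infty$ in the classical identity then yields the proposition. I expect the bookkeeping around these characteristic traces, rather than any analytic depth, to be the main obstacle.
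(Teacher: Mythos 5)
Your proof is correct and takes essentially the same route as the paper: the paper simply factors the equation as $(\partial_t-\partial_r)(\partial_t+\partial_r)w=-|w|^{p-1}w/r^{p-1}$ and reads the proposition off by integrating along the characteristic lines $t+r=s$ and $t-r=\tau$, with the needed regularity handled by the smooth approximation techniques it invokes in the proof of Proposition \ref{energy flux formula}. Your extra bookkeeping on characteristic traces and the limit passage just fills in details the paper leaves implicit.
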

\noindent As a result, we can use the boundedness of energy flux $Q$ and obtain an estimate
\begin{proposition} \label{pointwise estimate ws and wtau}
 Let $\tau<t_1<t_2<s$
 \begin{align*}
 \left|(w_r+w_t)(s-t_2,t_2) - (w_r+w_t)(s-t_1,t_1)\right| & \lesssim_{p,E} (s-t_2)^{-\frac{p-2}{p+1}}\\
 \left|(w_r-w_t)(t_2-\tau,t_2) - (w_r-w_t)(t_1-\tau,t_1)\right| & \lesssim_{p,E} (t_1-\tau)^{-\frac{p-2}{p+1}}.
 \end{align*}
\end{proposition}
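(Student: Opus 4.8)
The plan is to control the increment of $w_r+w_t$ along the characteristic line $t+r=s$ by the integral of the right-hand side in Proposition~\ref{var of w partial}, and then estimate that integral by splitting it into a region where $s-t$ is comparable to the final value $s-t_2$ and a tail region, using the boundedness of $Q_-^-(s)$ from Lemma~\ref{bounded of flux}. First I would write, by Proposition~\ref{var of w partial},
\[
 \left|(w_r+w_t)(s-t_2,t_2) - (w_r+w_t)(s-t_1,t_1)\right| \leq \int_{t_1}^{t_2} \frac{|w(s-t,t)|^p}{(s-t)^{p-1}} dt,
\]
and the goal is to bound the right-hand side by $C_{p,E}(s-t_2)^{-\frac{p-2}{p+1}}$ uniformly in $t_1$; so it suffices to handle $t_1 = -\infty$ (i.e.\ replace the lower limit by $-\infty$).

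Next I would interpolate the integrand between the two estimates at hand. On one hand, from $Q_-^-(s) \lesssim_p E$ (Lemma~\ref{bounded of flux}) we have $\int_{-\infty}^{t_2} \frac{|w(s-t,t)|^{p+1}}{(s-t)^{p-1}} dt \lesssim_p E$. On the other hand, the pointwise bound of Lemma~\ref{best upper bound pointwise} applied to $w(\cdot,t)$ gives $|w(s-t,t)| \lesssim_{p,E} (s-t)^{(p-1)/(p+3)}$, hence
\[
 \frac{|w(s-t,t)|^p}{(s-t)^{p-1}} = |w(s-t,t)|^{p+1}\cdot \frac{|w(s-t,t)|^{-1}}{(s-t)^{p-1}} \gtrsim_{p,E} \frac{|w(s-t,t)|^{p+1}}{(s-t)^{p-1}}\cdot (s-t)^{-(p-1)/(p+3)},
\]
which is the wrong direction, so instead I would write $\frac{|w|^p}{(s-t)^{p-1}} = \frac{|w|^{p+1}}{(s-t)^{p-1}}\cdot|w|^{-1}$ is not useful; rather, use $|w(s-t,t)|^p = |w(s-t,t)|^{p+1}\cdot|w(s-t,t)|^{-1}$ together with a lower bound on $|w|$ — which we do not have. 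The clean route is: on the region $s-t \geq \delta$ (for $\delta := s-t_2$), bound
\[
 \int_{s-t\geq\delta}\frac{|w(s-t,t)|^p}{(s-t)^{p-1}}\,dt \leq \left(\int \frac{|w|^{p+1}}{(s-t)^{p-1}}\,dt\right)^{\frac{p}{p+1}}\left(\int_{s-t\geq\delta}\frac{dt}{(s-t)^{p-1}}\right)^{\frac{1}{p+1}} \lesssim_{p,E} \left(\delta^{-(p-2)}\right)^{\frac{1}{p+1}} = \delta^{-\frac{p-2}{p+1}},
\]
using Hölder with exponents $\frac{p+1}{p}$ and $p+1$, the flux bound $Q_-^-(s)\lesssim_p E$ for the first factor (note $p-1\geq 2>1$ so $\int_\delta^\infty r^{-(p-1)}dr \simeq \delta^{-(p-2)}$ converges), and the constraint $t_2 = s-\delta$. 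Since the domain $t<t_2$ is exactly $s-t>\delta$, this single Hölder estimate covers the whole integral; there is no separate tail to treat. The second inequality of the Proposition follows identically, integrating the analogous identity along $t-r=\tau$ over $t\in[t_1,+\infty)$, using $Q_+^+(\tau)\lesssim_p E$ and the substitution showing $t-\tau \geq t_1-\tau$ on the domain of integration.

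The main obstacle is purely bookkeeping: making sure the Hölder split is performed so that the \emph{unbounded} part of the integrand, $(s-t)^{-(p-1)}$, lands in the $L^{p+1}$-type factor (which is integrable over $\{s-t\geq\delta\}$ precisely because $p-1>1$ when $p\geq 3$), while the energy-flux-bounded quantity $|w|^{p+1}(s-t)^{-(p-1)}$ carries the remaining weight; and checking that the resulting power $\frac{p-2}{p+1}$ matches the claim. One should also note the hypothesis $p\geq 3$ is used here through $p-1\geq 2$, so that $\int_\delta^\infty (s-t)^{-(p-1)}\,d(s-t)$ converges and produces the stated decay exponent; for $p$ close to $1+\sqrt 2$ this argument would degrade, which is consistent with $\frac{p-2}{p+1}$ being positive only for $p>2$.
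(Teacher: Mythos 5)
Your proposal is correct and follows essentially the same route as the paper: bound the increment via Proposition \ref{var of w partial}, then apply H\"older with exponents $\tfrac{p+1}{p}$ and $p+1$ so that $\tfrac{|w|^{p+1}}{(s-t)^{p-1}}$ is controlled by the flux bound $Q_-^-(s)\lesssim_p E$ (Lemma \ref{bounded of flux}) while $\int (s-t)^{-(p-1)}\,dt \lesssim_p (s-t_2)^{2-p}$, yielding the exponent $\tfrac{p-2}{p+1}$; the second inequality is handled symmetrically with $Q_+^+(\tau)$. The exploratory detour about interpolation in your write-up is unnecessary, but the final ``clean route'' is exactly the paper's argument.
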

\begin{proof}
Both estimates are proved in the same way. Let us prove the first one. According to Proposition \ref{var of w partial}, it suffices to show 
\[
 I = \left|\int_{t_1}^{t_2} \frac{|w|^{p-1}w(s-t,t)}{(s-t)^{p-1}} dt\right| \lesssim_{p,E} (s-t_2)^{-\frac{p-2}{p+1}}
\]
This follows a straightforward calculation.
\begin{align*}
 I & \leq \int_{t_1}^{t_2} \frac{|w(s-t,t)|^{p}}{(s-t)^{p-1}} dt\\
 & \leq \left(\int_{t_1}^{t_2} \left(\frac{|w(s-t,t)|^p}{(s-t)^{\frac{(p-1)p}{p+1}}}\right)^{\frac{p+1}{p}}dt\right)^{\frac{p}{p+1}} 
 \left(\int_{t_1}^{t_2}\left(\frac{1}{(s-t)^{\frac{p-1}{p+1}}}\right)^{p+1} dt\right)^{\frac{1}{p+1}}\\
 & \lesssim_p \left(Q_-^-(s;t_1,t_2)\right)^{\frac{p}{p+1}}\left[(s-t_2)^{2-p}-(s-t_1)^{2-p}\right]^{\frac{1}{p+1}}\\
 & \lesssim_{p,E} (s-t_2)^{-\frac{p-2}{p+1}}.
\end{align*}
\end{proof}

\paragraph{The limit of $w_r \pm w_t$} By Proposition \ref{pointwise estimate ws and wtau}, we immediately have pointwise limits
\begin{align*}
  &\lim_{t \rightarrow -\infty} (w_r+w_t)(s-t,t) = g_-(s);& &\lim_{t \rightarrow +\infty} (w_r-w_t)(t-\tau,t) = g_+(\tau).&
\end{align*}
By Fatou's Lemma, we have $\|g_-\|_{L^2(\Rm)}, \|g_+\|_{L^2(\Rm)} \leq E/\pi$. Let us define 
\begin{align*}
 &\tilde{E}_- = \pi \int_\Rm |g_-(s)|^2 ds;& &\tilde{E}_+ = \pi \int_\Rm |g_+(\tau)|^2 d\tau.&
\end{align*} 
Energy flux of full energy gives us 
\begin{align*}
 & E(t;s-t,+\infty) \leq E(0;s,\infty), t<0,s>0& &\Rightarrow& &\lim_{s\rightarrow +\infty} \sup_{t<0} \int_{s-t}^\infty |w_r(r,t)+w_t(r,t)|^2 dt = 0;&\\
 & E(t;t-\tau,+\infty) \leq E(0;-\tau,\infty),t>0, \tau<0& &\Rightarrow& &\lim_{\tau\rightarrow -\infty} \sup_{t>0} \int_{t-\tau}^\infty |w_r(r,t)-w_t(r,t)|^2 dt = 0.&
\end{align*}
As a result, we have 
\begin{proposition} \label{limits of wr pm wt}
There exist functions $g_-(s), g_+(\tau)$ with $\|g_-\|_{L^2(\Rm)}^2, \|g_+\|_{L^2(\Rm)}^2 \leq E/\pi$. so that 
\begin{align*}
 &\left|(w_r+w_t)(s-t,t) - g_-(s)\right| \lesssim_{p,E} (s-t)^{-\frac{p-2}{p+1}},& &t<s;&\\
 &\left|(w_r-w_t)(t-\tau,t) - g_+(\tau)\right| \lesssim_{p,E} (t-\tau)^{-\frac{p-2}{p+1}},& &t>\tau.&
\end{align*}
Furthermore we have the following $L^2$ convergence for any $s_0,\tau_0 \in \Rm$
\begin{align*}
 &\lim_{t \rightarrow -\infty} \|(w_r+w_t)(s-t,t) - g_-(s)\|_{L_s^2([s_0,\infty))} = 0; \\
 &\lim_{t \rightarrow +\infty} \|(w_r-w_t)(t-\tau,t) - g_+(\tau)\|_{L_\tau^2((-\infty, \tau_0])} = 0.
\end{align*}
\end{proposition}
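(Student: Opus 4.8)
The pointwise content of the proposition is essentially already derived in the discussion preceding it, so the plan is to assemble those observations and then upgrade the pointwise convergence to $L^{2}$ convergence on a half-line.

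\textbf{Existence, pointwise rate, and $L^{2}$ bound of $g_-$.} Fix $s$. By Proposition~\ref{pointwise estimate ws and wtau}, for $t_{1}<t_{2}<\min\{s,0\}$ one has
\[
\bigl|(w_{r}+w_{t})(s-t_{2},t_{2})-(w_{r}+w_{t})(s-t_{1},t_{1})\bigr|\lesssim_{p,E}(s-t_{2})^{-\frac{p-2}{p+1}},
\]
and since $p\ge 3$ the exponent $\frac{p-2}{p+1}$ is strictly positive, so $t\mapsto (w_{r}+w_{t})(s-t,t)$ (read along the characteristic line as in Proposition~\ref{var of w partial}) is Cauchy as $t\to-\infty$; let $g_{-}(s)$ be its limit, which is measurable as a pointwise limit of measurable functions of $s$. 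Keeping $t_{2}=t$ and letting $t_{1}\to-\infty$ in the same inequality yields the pointwise rate $|(w_{r}+w_{t})(s-t,t)-g_{-}(s)|\lesssim_{p,E}(s-t)^{-\frac{p-2}{p+1}}$. For the $L^{2}$ bound, the substitution $r=s-t$ gives $\int_{t}^{\infty}|(w_{r}+w_{t})(s-t,t)|^{2}\,ds=\int_{0}^{\infty}|(w_{r}+w_{t})(r,t)|^{2}\,dr\le \pi^{-1}E_{-}(t)\le\pi^{-1}E$, so Fatou's lemma as $t\to-\infty$ gives $\|g_{-}\|_{L^{2}(\Rm)}^{2}\le \pi^{-1}E$.

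\textbf{$L^{2}$ convergence on $[s_{0},\infty)$.} Fix $s_{0}$ and $\eps>0$. Since the rate above is uniform in $s$ only on bounded $s$-intervals, I handle the tail with finite speed of propagation: the flux inequality $E(t;s-t,\infty)\le E(0;s,\infty)$ for $t<0$, $s>0$, together with $\lim_{s\to\infty}E(0;s,\infty)=0$, lets me choose $M>s_{0}$ with $\sup_{t<0}\int_{M-t}^{\infty}|(w_{r}+w_{t})(r,t)|^{2}\,dr<\eps$; enlarging $M$ also makes $\int_{M}^{\infty}|g_{-}|^{2}<\eps$. Then for every $t<0$, after $r=s-t$,
\[
\int_{M}^{\infty}\bigl|(w_{r}+w_{t})(s-t,t)-g_{-}(s)\bigr|^{2}\,ds\le 2\int_{M-t}^{\infty}|(w_{r}+w_{t})(r,t)|^{2}\,dr+2\int_{M}^{\infty}|g_{-}|^{2}<4\eps,
\]
while on the bounded piece the pointwise rate gives $\int_{s_{0}}^{M}|(w_{r}+w_{t})(s-t,t)-g_{-}(s)|^{2}\,ds\lesssim_{p,E}(M-s_{0})(s_{0}-t)^{-\frac{2(p-2)}{p+1}}\to 0$ as $t\to-\infty$. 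Hence $\limsup_{t\to-\infty}\|(w_{r}+w_{t})(s-t,t)-g_{-}(s)\|_{L^{2}_{s}([s_{0},\infty))}^{2}\le 4\eps$, and $\eps$ was arbitrary. The assertions for $g_{+}(\tau)=\lim_{t\to+\infty}(w_{r}-w_{t})(t-\tau,t)$ follow verbatim after reversing time and replacing $w_{r}+w_{t}$ by $w_{r}-w_{t}$, using the second halves of Propositions~\ref{var of w partial} and~\ref{pointwise estimate ws and wtau} and the companion exterior-energy tail bound for $w_{r}-w_{t}$.

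\textbf{Main obstacle.} The only delicate point is the tail estimate in the second step: the decay furnished by Proposition~\ref{pointwise estimate ws and wtau} controls $(w_{r}+w_{t})(s-t,t)-g_{-}(s)$ pointwise but degenerates as $s\to\infty$, so pointwise decay by itself cannot give $L^{2}$ convergence on the unbounded interval $[s_{0},\infty)$; one must combine it with the uniform-in-$t$ smallness of the exterior energy, which is exactly where finite speed of propagation (the energy-flux inequality on exterior light cones) enters.
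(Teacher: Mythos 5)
Your proposal is correct and follows essentially the same route as the paper: the pointwise limit and rate come from Proposition \ref{pointwise estimate ws and wtau} via a Cauchy argument along characteristics, the $L^2$ bound from Fatou's lemma, and the $L^2$ convergence on half-lines from combining the pointwise rate on a bounded interval with the uniform-in-$t$ smallness of the exterior energy $E(t;s-t,\infty)\leq E(0;s,\infty)$, which is exactly the tail mechanism the paper invokes. The only cosmetic difference is your harmless restriction $t_2<\min\{s,0\}$ in the Cauchy step, which is not needed since the estimate of Proposition \ref{pointwise estimate ws and wtau} only requires $t_1<t_2<s$.
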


\paragraph{The scattering target} Now we can define 
\begin{align*}
 &V^-(r,t) = \frac{1}{2}\int_{t-r}^{t+r} g_-(s) ds;& &V^+(r,t) = \frac{1}{2}\int_{t-r}^{t+r} g_+(\tau) d\tau.&
\end{align*}
One can check $(V^\pm,V_t^\pm) \in C(\Rm_t; \dot{H}^1\times L^2)$ are one-dimensional free waves. We can also define 
\begin{align*}
 v^\pm (x,t) = V^\pm (|x|,t)/|x|
\end{align*}
as three-dimensional free waves with energy $\tilde{E}_\pm$. By Proposition \ref{limits of wr pm wt} and the fact $\displaystyle \lim_{t\rightarrow \pm \infty} E_\mp(t) = 0$ (given by Corollary \ref{asymptotic behaviour 2}) we can conduct a simple calculation and conclude for $s_0,\tau_0 \in \Rm$
\begin{align}
 & \lim_{t\rightarrow -\infty} \left\|\begin{pmatrix} w_r(\cdot,t)\\ w_t(\cdot,t) \end{pmatrix} -\begin{pmatrix} V_r^-(\cdot,t)\\ V_t^-(\cdot,t) \end{pmatrix} \right\|_{L^2([s_0-t,+\infty))} = 0. \label{outer estimate 1} \\
 & \lim_{t\rightarrow +\infty} \left\|\begin{pmatrix} w_r(\cdot,t)\\ w_t(\cdot,t) \end{pmatrix} -\begin{pmatrix} V_r^+(\cdot,t)\\ V_t^+(\cdot,t) \end{pmatrix} \right\|_{L^2([t-\tau_0,+\infty))} = 0. \nonumber
\end{align}
Rewriting this in term of the original solution $u$ and $v^\pm$ via identity \eqref{energy transformation}, we obtain Part (a) of Theorem \ref{main 1}. Next we prove Part (b), i.e. an equivalent condition for scattering.
\begin{proposition} \label{equivalent scattering}
 If $\tilde{E}_- = E$, then the solution $u$ scatters in the negative time direction 
 \[
  \lim_{t\rightarrow -\infty} \left\|\begin{pmatrix} u(\cdot,t)\\ u_t(\cdot,t) \end{pmatrix} -\begin{pmatrix} v^-(\cdot,t)\\ v_t^-(\cdot,t) \end{pmatrix} \right\|_{\dot{H}^1\times L^2(\Rm^3)} = 0.
 \]
\end{proposition}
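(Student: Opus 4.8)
The plan is to promote the exterior convergence \eqref{outer estimate 1} to a convergence on all of $\Rm^3$, using the hypothesis $\tilde{E}_-=E$ to rule out any energy trapped in the interior region $|x|<|t|$. It is convenient to work with the one-dimensional quantities; abbreviate $f(r,t)=(w_r+w_t)(r,t)$, $h(r,t)=(w_r-w_t)(r,t)$, together with $F(r,t)=(V_r^-+V_t^-)(r,t)=g_-(t+r)$ and $H(r,t)=(V_r^--V_t^-)(r,t)=g_-(t-r)$. Since $u(\cdot,t)$ and $v^-(\cdot,t)$ are both radial $\dot{H}^1(\Rm^3)$ functions and $w-V^-=r(u-v^-)$, applying \eqref{energy between u and w} to the radial function $u-v^-$ gives, at every time $t$,
\begin{align*}
 \big\|(u-v^-,\,u_t-v_t^-)(\cdot,t)\big\|_{\dot{H}^1\times L^2(\Rm^3)}^2
 &= 4\pi\int_0^\infty\Big(|(w-V^-)_r(r,t)|^2+|(w-V^-)_t(r,t)|^2\Big)\,dr\\
 &= 2\pi\,\|f-F\|_{L^2([0,\infty))}^2 + 2\pi\,\|h-H\|_{L^2([0,\infty))}^2,
\end{align*}
so the proposition reduces to showing that both $L^2([0,\infty))$ norms on the right tend to $0$ as $t\to-\infty$.

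The outward term is immediate: by Definition \ref{energies} one has $\pi\|h(\cdot,t)\|_{L^2([0,\infty))}^2\le E_+(t)$, which tends to $0$ by Proposition \ref{monotonicity of energies}, while $\|H(\cdot,t)\|_{L^2([0,\infty))}^2=\int_{-\infty}^{t}|g_-(s)|^2\,ds\to0$ since $g_-\in L^2(\Rm)$. For the inward term I would first record two mass limits. From Definition \ref{energies}, $\pi\|f(\cdot,t)\|_{L^2([0,\infty))}^2=E_-(t)-\tfrac{2\pi}{p+1}\int_0^\infty\frac{|w(r,t)|^{p+1}}{r^{p-1}}\,dr$, and by Corollary \ref{asymptotic behaviour 2} the right-hand side converges to $E$; combined with the hypothesis $E=\tilde{E}_-=\pi\|g_-\|_{L^2(\Rm)}^2$ this gives $\|f(\cdot,t)\|_{L^2([0,\infty))}^2\to\|g_-\|_{L^2(\Rm)}^2$. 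Also, directly, $\|F(\cdot,t)\|_{L^2([0,\infty))}^2=\int_t^\infty|g_-(s)|^2\,ds\to\|g_-\|_{L^2(\Rm)}^2$.

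Now fix $s_0\in\Rm$, and for $t$ sufficiently negative split $[0,\infty)=[0,s_0-t]\cup[s_0-t,\infty)$. On the outer piece, \eqref{outer estimate 1} gives $\|f-F\|_{L^2([s_0-t,\infty))}\to0$; since $\|F(\cdot,t)\|_{L^2([s_0-t,\infty))}^2=\int_{s_0}^\infty|g_-(s)|^2\,ds$ is a fixed number, this forces $\|f(\cdot,t)\|_{L^2([s_0-t,\infty))}^2\to\int_{s_0}^\infty|g_-|^2$; subtracting from the full-line limits yields $\|f(\cdot,t)\|_{L^2([0,s_0-t])}^2\to\int_{-\infty}^{s_0}|g_-|^2$, while $\|F(\cdot,t)\|_{L^2([0,s_0-t])}^2=\int_t^{s_0}|g_-|^2\to\int_{-\infty}^{s_0}|g_-|^2$ directly. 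Applying the triangle inequality first on $[0,s_0-t]$ and then on $[0,\infty)$,
\begin{align*}
 \limsup_{t\to-\infty}\|f-F\|_{L^2([0,\infty))}
 &\le \limsup_{t\to-\infty}\|f-F\|_{L^2([0,s_0-t])}
 \le 2\Big(\int_{-\infty}^{s_0}|g_-(s)|^2\,ds\Big)^{1/2}.
\end{align*}
Since the left-hand side does not depend on $s_0$, letting $s_0\to-\infty$ shows it equals $0$, which finishes the proof.

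There is no serious obstacle once \eqref{outer estimate 1} and the limits of Corollary \ref{asymptotic behaviour 2} are available. The one delicate point is that the interior interval $[0,s_0-t]$ grows without bound as $t\to-\infty$, so the exterior estimate cannot be invoked there; the substitute is precisely the quantitative content of ``$\tilde{E}_-=E$, i.e. no retarded energy'': the near-conservation of the $L^2$-mass of $f$ together with the explicit free profile $F$ forces both of them to carry at most $\int_{-\infty}^{s_0}|g_-|^2$ of their mass on $[0,s_0-t]$, uniformly in $t$, which is small once $s_0$ is chosen negative enough.
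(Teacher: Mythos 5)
Your argument is correct and follows essentially the same route as the paper's proof: you split at the radius $r=s_0-t$, control the exterior via \eqref{outer estimate 1} (equivalently Proposition \ref{limits of wr pm wt}), and use the hypothesis $\tilde{E}_-=E$ together with Corollary \ref{asymptotic behaviour 2} to show the interior carries at most the tail mass $\int_{-\infty}^{s_0}|g_-|^2$, then let $s_0\to-\infty$. The only difference from the paper is cosmetic bookkeeping: you track the null components $w_r\pm w_t$ and use a Pythagorean mass subtraction plus $E_+(t)\to 0$, whereas the paper bounds the interior truncated energy $E(t;0,s_0-t)<\eps$ directly from energy conservation once the exterior inward flux exceeds $E-\eps$.
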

\begin{proof}
 Given $\eps>0$, there exists a number $s_0 \in \Rm$, so that $\pi \int_{-\infty}^{s_0} |g_-(s)|^2 ds < \eps$. According to Proposition \ref{limits of wr pm wt}, for sufficiently large negative time $t<t_1$, we have 
\begin{equation} 
 E(t;s_0-t,\infty) \geq \pi \int_{s_0-t}^\infty \left|(w_r+w_t)(r,t)\right|^2 dr > E-\eps \Rightarrow E(t;0,s_0-t)<\eps. \label{remaining energy 1}
\end{equation}
By the definition of $V^-$ and our assumption on $s_0$ we also have
\begin{align*}
 & \pi \int_{0}^{s_0-t} \left|(V_r^- +V_t^-)(r,t)\right|^2 dr = \pi \int_{t}^{s_0} |g_-(s)|^2 ds < \eps,& &\hbox{for}\; t<s_0;&\\
 & \pi \int_{0}^{s_0-t} \left|(V_r^- -V_t^-)(r,t)\right|^2 dr = \pi \int_{2t-s_0}^{t} |g_-(s)|^2 ds \rightarrow 0,& &\hbox{as}\; t \rightarrow -\infty.&
\end{align*}
Therefore we have $2\pi \int_0^{s_0-t} \left(|V_r^-|^2 + |V_t^-|^2\right) dr < \eps$ for sufficiently small $t<t_2$. Combining this with \eqref{remaining energy 1} we have the following inequality for any $t<\min\{t_1,t_2\}$:
\[
 2\pi \int_0^{s_0-t} \left(|V_r^- -w_r|^2 + |V_t^- -w_r|^2\right) dr < 4\eps.
\]
Combining this with \eqref{outer estimate 1}, we know for sufficiently large negative $t$
\[
  2\pi \int_0^{+\infty} \left(|V_r^- -w_r|^2 + |V_t^- -w_r|^2\right) dr < 4\eps.
\]
We can rewrite this in terms of $u$ and $v^-$ by \eqref{energy between u and w} and finish the proof. 
\end{proof}

\paragraph{Remaining Energy} Proposition \ref{equivalent scattering} implies that if we did not have the scattering in the negative time direction, then the difference $E-\tilde{E}_-$ would be positive. Let us try to locate the remaining energy $E-\tilde{E}_-$. Proposition \ref{limits of wr pm wt} tells us that the location is inside the ball $B({\mathbf 0}, s_0-t)$ when $|t|$ is sufficiently large for any given $s_0$. In other words, this part of energy eventually enters and stays in any backward light cone as $t\rightarrow -\infty$. It travels slower than the light.  The following lemma, however, shows that its out-going speed is closed to the light speed. 
\begin{lemma} \label{lower bound of speed}
 Given $c\in(0,1)$, we have the limit
 \[
  \lim_{t \rightarrow \pm \infty} E_{\pm}(t;0,c|t|) = 0. 
 \]
\end{lemma}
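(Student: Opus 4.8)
The plan is to reduce to the case $t\to-\infty$ and inward energy (the case $t\to+\infty$ and $E_+$ follows by the time reversal symmetry $u(x,t)\mapsto u(x,-t)$, which turns inward energy into outward energy), and then to combine the triangle law of Proposition \ref{triangle law} with an averaging argument of the type used in the proof of Lemma \ref{lift of r}. For any $t<0$ and any radius $r_0>0$, Proposition \ref{triangle law} gives
\[
E_-(t;0,r_0)=\pi\int_t^{t+r_0}1\,d\mu(t')+Q_-^-(t+r_0;t,t+r_0)+\frac{2\pi(p-1)}{p+1}\iint_{\Omega(t,r_0)}\frac{|w(r,t')|^{p+1}}{r^p}\,dr\,dt',
\]
with $\Omega(t,r_0)=\{(r,t'):r>0,\ t'>t,\ r+t'<t+r_0\}$. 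If we can pick $r_0=r_0(t)\ge c|t|$ in such a way that $t+r_0(t)\to-\infty$, then the first term tends to $0$ because $\mu$ is a finite measure with $\mu(\Rm)\lesssim_p E$, the third tends to $0$ because $\iint_{(0,\infty)\times\Rm}\frac{|w|^{p+1}}{r^p}\,dr\,dt'\lesssim_p E<\infty$ (Corollary \ref{cor w morawetz}) and the domain $\Omega(t,r_0(t))$ recedes into the far past; then, provided the middle flux term also goes to $0$, monotonicity of $E_-(t;0,\cdot)$ in the radius gives $E_-(t;0,c|t|)\le E_-(t;0,r_0(t))\to 0$. So the whole difficulty is to locate, for each large negative $t$, a radius $r_0(t)\in[c|t|,(1+\delta)c|t|]$ along which the inward flux $Q_-^-(t+r_0;t,t+r_0)$ is small.

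To do this I would fix $\delta\in(0,\tfrac1c-1)$, so that $\lambda:=1-(1+\delta)c>0$, and average over the band of radii $r\in[c|t|,(1+\delta)c|t|]$. Exactly as in Lemma \ref{lift of r}, the change of variables $\bar r=t+r-t'$ turns $\int_{c|t|}^{(1+\delta)c|t|}Q_-^-(t+r;t,t+r)\,dr$ into $\frac{4\pi}{p+1}\iint_{\Omega_t}\frac{|w(\bar r,t')|^{p+1}}{\bar r^{p-1}}\,d\bar r\,dt'$ over $\Omega_t=\{(\bar r,t'):\bar r>0,\ t'>t,\ t(1-c)\le\bar r+t'\le\lambda t\}$. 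Every point of $\Omega_t$ satisfies $\bar r\le(1+\delta)c|t|$, so $\bar r^{-(p-1)}\le(1+\delta)c|t|\,\bar r^{-p}$ and the integral over the band is at most $\frac{4\pi(1+\delta)c|t|}{p+1}\iint_{\Omega_t}\frac{|w|^{p+1}}{\bar r^p}$. Dividing by the band length $\delta c|t|$ and applying the mean value theorem, there is $r^*(t)\in[c|t|,(1+\delta)c|t|]$ with $Q_-^-(t+r^*;t,t+r^*)\le\frac{4\pi(1+\delta)}{\delta(p+1)}\iint_{\Omega_t}\frac{|w|^{p+1}}{\bar r^p}$. Since $\Omega_t\subset\{\bar r>0,\ \bar r+t'\le\lambda t\}$ and $\lambda t\to-\infty$, dominated convergence against the integrable density $|w|^{p+1}/\bar r^p$ forces this bound to $0$ as $t\to-\infty$.

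Finally I would substitute $r_0=r^*(t)$ into the triangle law. Because $r^*(t)\le(1+\delta)c|t|$ we have $t+r^*(t)\le\lambda t\to-\infty$, so the $d\mu$-term and the double-integral term vanish in the limit as explained above; combined with the flux bound just derived this yields $E_-(t;0,r^*(t))\to0$, and hence $E_-(t;0,c|t|)\le E_-(t;0,r^*(t))\to0$ since $r^*(t)\ge c|t|$. I expect the only delicate point to be the bookkeeping that simultaneously forces $r^*(t)$ to lie \emph{above} $c|t|$ and keeps the averaging band $[c|t|,(1+\delta)c|t|]$, together with the apex time $t+r^*(t)$, entirely in the receding far past; this is precisely what the restriction $\delta<\tfrac1c-1$ (equivalently $\lambda>0$) secures, and it is the reason one must average over a band of radii rather than work with the single radius $c|t|$.
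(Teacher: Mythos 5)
Your proposal is correct and follows essentially the same route as the paper: the paper also applies the triangle law at radii in a band above $c|t|$ (it uses $[c|t|,\tfrac{c+1}{2}|t|]$, a particular admissible choice of your $\delta$), controls the $d\mu$-term and the double integral by the receding apex $t+r\to-\infty$ together with finiteness of $\mu$ and of $\iint |w|^{p+1}/r^{p}\,dr\,dt$, and handles the flux term by the same change-of-variables averaging as in Lemma \ref{lift of r} (integrating the inequality over the band and dividing, which is equivalent to your mean-value selection of $r^*(t)$). The only cosmetic difference is that you select a single good radius while the paper averages the whole inequality, and both use monotonicity in the radius to pass back to $E_-(t;0,c|t|)$.
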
 
\begin{proof}
 Again we only need to give a proof for inward energy. First of all, we can apply the triangle law on the triangle region $\{(r',t'): t'>t, r'>0, t'+r'<t+r\}$ with 
 $c|t|<r<\frac{c+1}{2}|t|$. 
\begin{align*}
  E_-(t;0,r) & = \pi \int_{t}^{t+r} 1 d\mu(t') + Q_-^-(t+r; t, t+r) \\
 & \qquad + \frac{2\pi (p-1)}{p+1} \int_{t}^{t+r} \int_0^{t+r-t'} \frac{|w(r',t')|^{p+1}}{r'^p} dr' dt'.
\end{align*}
By the upper bound of $r$ we have $E_-(t;0,r) \leq P(t) + Q_-^-(t+r; t, t+r)$ where 
\[
 P(t) = \pi \int_{t}^{\frac{1-c}{2}t} 1 d\mu(t') + \frac{2\pi (p-1)}{p+1} \int_{t}^{\frac{1-c}{2}t} \int_0^{\frac{1-c}{2}t-t'} \frac{|w(r',t')|^{p+1}}{r'^p} dr' dt'
\]
satisfies $\displaystyle \lim_{t\rightarrow -\infty} P(t) = 0$. We can integrate the inequality above for $r \in \left(c|t|,\frac{c+1}{2}|t|\right)$ and obtain
\begin{align}
 \frac{1-c}{2}|t| \cdot E_{-}(t,0,c|t|) & \leq \int_{c|t|}^{\frac{c+1}{2}|t|} E_-(t,0,r) dr \nonumber\\
 & \leq \frac{1-c}{2}|t|P(t) + \int_{c|t|}^{\frac{c+1}{2}|t|} Q_-^-(t+r;t,t+r) dr. \label{upper bound of E ct}
\end{align}
Following the same argument in Lemma \ref{lift of r}, as shown in figure \ref{figure energydis}, we have 
\begin{align*}
 \int_{c|t|}^{\frac{c+1}{2}|t|} Q_-^-(t+r;t,t+r) dr & = \frac{4}{p+1} \iint_{\Omega(t)} \frac{|w(r',t')|^{p+1}}{r'^{p-1}} dr' dt'\\
 & \leq \frac{2(1+c)|t|}{p+1} \iint_{\Omega(t)} \frac{|w(r',t')|^{p+1}}{r'^p} dr' dt'.
\end{align*}
Here $\Omega(t) = \left\{(r',t'): t'>t,r'>0,(1-c)t<t'+r'<\frac{1-c}{2}t\right\}\subset \{(r',t'): r'\leq \frac{c+1}{2}|t|\}$. Plugging this upper bound in \eqref{upper bound of E ct} and dividing both sides by $\frac{1-c}{2}|t|$, we obtain
\[
 E_{-}(t,0,c|t|)  \leq P(t) + \frac{4(1+c)}{(p+1)(1-c)}  \iint_{\Omega(t)} \frac{|w(r',t')|^{p+1}}{r'^p} dr' dt'.
\]
Now we can take the limit $t\rightarrow -\infty$ on both sides and finish the proof.
\end{proof}

 \begin{figure}[h]
 \centering
 \includegraphics[scale=0.9]{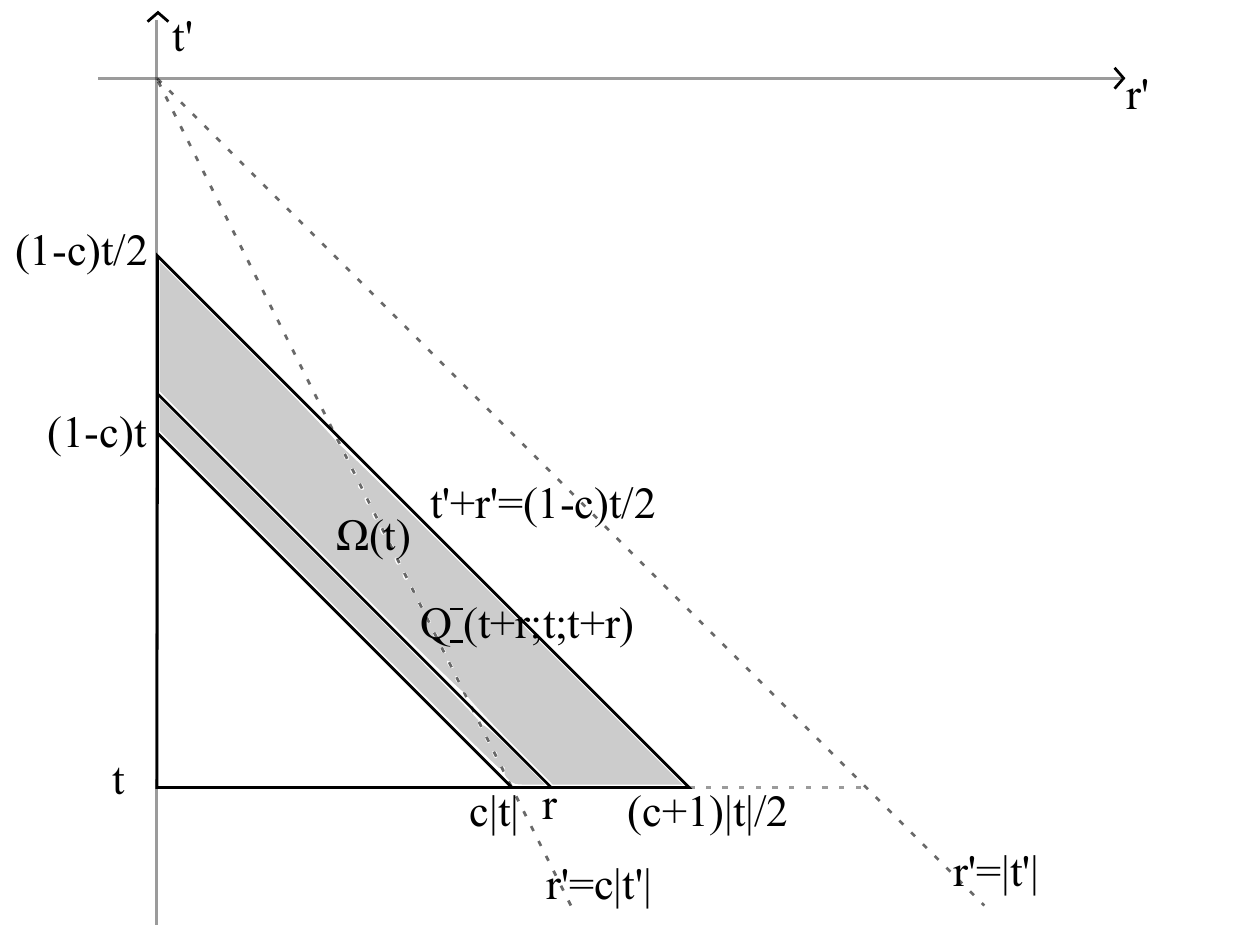}
 \caption{Illustration for proof of Proposition \ref{lower bound of speed}} \label{figure energydis}
\end{figure}

\noindent This shows that the remaining energy is in the sphere shell $\{x: c|t|<|x|<|t|\}$ as $|t|$ is large. The upper bound of $|x|$ can be further improved, so that we can locate the remaining energy in a region further and further away from the light cone $|x|=|t|$ as $t$ goes to $-\infty$. This is the reason why we give this remaining energy another name ``retarded energy''.
\begin{lemma} \label{location of tilde E}
 Given any $\beta < \frac{2(p-2)}{p+1}$, we have the limit
 \begin{align*}
  \lim_{t \rightarrow - \infty} E_{-}(t;|t|-|t|^\beta,+\infty) = \tilde{E}_-.
 \end{align*}
\end{lemma}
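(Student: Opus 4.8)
The plan is to pass to the incoming characteristic coordinate and compare $w_r+w_t$ with $g_-$. Fix $t<0$ with $|t|=-t$ large and substitute $r=s-t$ in Definition \ref{energies}; the range $r\in(|t|-|t|^\beta,+\infty)$ becomes $s\in(-|t|^\beta,+\infty)$, so
\begin{equation*}
 E_-(t;|t|-|t|^\beta,+\infty)=\pi\int_{-|t|^\beta}^{\infty}\big|(w_r+w_t)(s-t,t)\big|^2\,ds+\frac{2\pi}{p+1}\int_{-|t|^\beta}^{\infty}\frac{|w(s-t,t)|^{p+1}}{(s-t)^{p-1}}\,ds.
\end{equation*}
The second term is at most $\frac{2\pi}{p+1}\int_0^\infty\frac{|w(r,t)|^{p+1}}{r^{p-1}}\,dr$, a fixed multiple of $\int_{\Rm^3}|u(x,t)|^{p+1}\,dx$, which tends to $0$ as $t\to-\infty$ by Corollary \ref{asymptotic behaviour 2}. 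So it suffices to show the first term tends to $\tilde{E}_-=\pi\int_\Rm|g_-(s)|^2\,ds$, and I will do so by matching $\liminf$ and $\limsup$.

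For the lower bound, fix $s_0\in\Rm$. Once $|t|$ is large enough that $-|t|^\beta<s_0$, the $L^2$ convergence in Proposition \ref{limits of wr pm wt} gives $\pi\int_{s_0}^\infty|(w_r+w_t)(s-t,t)|^2\,ds\to\pi\int_{s_0}^\infty|g_-(s)|^2\,ds$; discarding the (nonnegative) remainder over $(-|t|^\beta,s_0)$ and the potential term yields $\liminf_{t\to-\infty}E_-(t;|t|-|t|^\beta,+\infty)\ge\pi\int_{s_0}^\infty|g_-|^2$. Letting $s_0\to-\infty$ gives $\liminf\ge\tilde{E}_-$.

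For the upper bound, split the gradient integral at $s_0$. On $[s_0,\infty)$ it converges to $\pi\int_{s_0}^\infty|g_-|^2\le\tilde{E}_-$ as above, and the potential term vanishes. On $(-|t|^\beta,s_0)$ use the pointwise estimate of Proposition \ref{limits of wr pm wt}: there $s-t\ge|t|-|t|^\beta\ge\tfrac{1}{2}|t|$ for large $|t|$, so $|(w_r+w_t)(s-t,t)-g_-(s)|\lesssim_{p,E}|t|^{-\frac{p-2}{p+1}}$ uniformly on this interval of length $\lesssim|t|^\beta$. Squaring and using $(a+b)^2\le2a^2+2b^2$, the contribution of this slab is at most $2\pi\int_{-\infty}^{s_0}|g_-|^2$ plus a term $\lesssim_{p,E}|t|^\beta\cdot|t|^{-\frac{2(p-2)}{p+1}}=|t|^{\,\beta-\beta_0(p)}$ with $\beta_0(p)=\frac{2(p-2)}{p+1}$; since $\beta<\beta_0(p)$ this vanishes as $t\to-\infty$. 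Hence $\limsup_{t\to-\infty}E_-(t;|t|-|t|^\beta,+\infty)\le2\pi\int_{-\infty}^{s_0}|g_-|^2+\pi\int_{s_0}^\infty|g_-|^2$, and letting $s_0\to-\infty$ gives $\limsup\le\tilde{E}_-$. Combining the two bounds proves the lemma.

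The only delicate point is the near-light-cone slab $|t|-|t|^\beta<r<|t|+s_0$ in the upper bound: its width grows like $|t|^\beta$, so one must weigh this growth against the pointwise decay rate $(s-t)^{-(p-2)/(p+1)}$ from Proposition \ref{limits of wr pm wt}. The two balance exactly at $|t|^{\beta-\beta_0(p)}\to0$, which is why the threshold is $\beta_0(p)=\frac{2(p-2)}{p+1}$; together with Lemma \ref{lower bound of speed}, which shows the complementary region $r<c|t|$ carries no energy in the limit, this pins down where the scattering energy $\tilde{E}_-$ concentrates.
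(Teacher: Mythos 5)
Your proof is correct and follows essentially the same route as the paper: it reduces to the gradient term via the vanishing potential energy (Corollary \ref{asymptotic behaviour 2}), controls the width-$|t|^\beta$ slab near the light cone by the pointwise bound of Proposition \ref{limits of wr pm wt} (the same balance $|t|^{\beta-\frac{2(p-2)}{p+1}}\to 0$), and uses the $L^2$ convergence on the outer region. The only difference is cosmetic: you run a $\liminf$/$\limsup$ sandwich with an auxiliary cutoff $s_0\to-\infty$, whereas the paper fixes $s_0=0$ and proves the two exact limits \eqref{limit 1} and \eqref{limit 2} directly.
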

\begin{proof} 
Our conclusion is a combination of the following two limits. Here we can choose $s_0=0$ in identity \eqref{limit 2}.
\begin{align}
 \lim_{t \rightarrow - \infty} E_{-}(t;|t|-|t|^\beta,|t|) &= \pi \int_{-\infty}^0 |g_-(s)|^2 ds;& \label{limit 1}\\
 \lim_{t \rightarrow - \infty} E_{-}(t;s_0-t,+\infty) &= \pi \int_{s_0}^\infty |g_-(s)|^2 ds, \quad s_0 \in \Rm.& \label{limit 2}
\end{align}
We start by proving the first one. Let $I = \left(\int_{-\infty}^0 |g_-(s)|^2 ds\right)^{1/2}$. By Corollary \ref{asymptotic behaviour 2} we only need to show 
\[
 \lim_{t \rightarrow -\infty} \left\|(w_r+w_t)(r,t)\right\|_{L^2([|t|-|t|^\beta,|t|])} = \lim_{t \rightarrow -\infty} \left\|(w_r+w_t)(s-t,t)\right\|_{L_s^2([-|t|^\beta,0])} = I.
\] 
Since we have $\displaystyle \lim_{t \rightarrow -\infty} \|g_-\|_{L^2([-|t|^\beta,0])} = I$, it suffices to show 
\[
 \lim_{t\rightarrow -\infty} \left\|(w_r+w_t)(s-t,t) - g_-(s)\right\|_{L_s^2([-|t|^\beta,0])}  = 0. 
\]
This immediately follows the pointwise estimate given in Proposition \ref{limits of wr pm wt}. The same argument as above with the $L^2$ convergence part of Proposition \ref{limits of wr pm wt} instead proves the second limit \eqref{limit 2}.
\end{proof}

\noindent Combining Lemma \ref{lower bound of speed}, Lemma \ref{location of tilde E}, Proposition \ref{monotonicity of energies} and Corollary \ref{asymptotic behaviour 2}, we are able to prove Part (c) of Theorem \ref{main 1}, i.e. we have the following limits for any $c\in (0,1)$ and $0<\beta<\frac{2(p-2)}{p+1}$:
\[
 \lim_{t \rightarrow - \infty} E_{-}(t;c|t|,|t|-|t|^\beta) = \lim_{t \rightarrow -\infty} E(t;c|t|,|t|-|t|^\beta) = E - \tilde{E}_-.
\]

\begin{remark} \label{inside energy limit}
 A combination of identity \eqref{limit 2} and Corollary \ref{asymptotic behaviour 2} gives ($s_0 \in \Rm$)
\[
 \lim_{t \rightarrow -\infty} E_-(t;0,s_0-t) = E-\pi\int_{s_0}^\infty |g_-(s)|^2 ds.
\]
\end{remark}
\begin{remark}
If we apply triangle law on the triangle region $\Omega(s,t_0) = \{(r,t): r>0,t>t_0,r+t<s\}$, make $t_0\rightarrow -\infty$ with Remark \ref{inside energy limit} in mind and finally consider the limit as $s \rightarrow -\infty$, we obtain another expression of the retarded energy 
\[
 E - \tilde{E}_- = \lim_{s \rightarrow -\infty} Q_-^-(s).
\] 
\end{remark}

\section{Scattering with Additional Decay on Initial Data}

In this section we prove Theorem \ref{main 2}, i.e. the solution to (CP1) scatters in both time directions if the initial data satisfy additional decay assumptions. The proof is by a contradiction. If the solution failed to scatter in the negative direction, we would have $\tilde{E}_-<E$. 

\subsection{Additional Contribution by Retarded Energy}

According to Part (c) of Theorem \ref{main 1}, given any $\beta < \frac{2(p-2)}{p+1}$, there exists a negative time $t_1$, so that the inequality
\[
 \int_{|x|<|t|-|t|^\beta} \left(\frac{1}{2}|\nabla u(x,t)|^2 + \frac{1}{2}|u_t(x,t)|^2 + \frac{1}{p+1}|u(x,t)|^{p+1}\right) dx > \frac{E-\tilde{E}_-}{2}.
\]
holds for any time $t<t_1$. If $R$ is a large number $R>|t_1|$ and $t \in (-R-R^\beta,-R)$, we have $R<|t|<R+R^\beta \Rightarrow |t|^\beta>R^\beta$. Thus 
$|t|-|t|^\beta < (R+R^\beta) - R^\beta = R$. This means
\[
 \int_{|x|<R} \left(\frac{1}{2}|\nabla u(x,t)|^2 + \frac{1}{2}|u_t(x,t)|^2 + \frac{1}{p+1}|u(x,t)|^{p+1}\right) dx > \frac{E-\tilde{E}_-}{2}.
\]
As a result we have for sufficiently large $R$:
\begin{equation} \label{additional contribution}
 \int_{-R-R^\beta}^{-R} \int_{|x|<R} \left(\frac{1}{2}|\nabla u(x,t)|^2 + \frac{1}{2}|u_t(x,t)|^2 + \frac{1}{p+1}|u(x,t)|^{p+1}\right) dx dt > \frac{E-\tilde{E}_-}{2}\cdot R^\beta.
\end{equation}
This gives a lower bound of the left hand side of the inequality in Proposition \ref{energy distribution by morawetz}.

\subsection{Upper Bound on Energy Leak}

Now let us give an upper bound on the amount of energy escaping the ball $B(0,R) = \{x \in \Rm^3 :|x|<R\}$ for time $t \in [-R,R]$ under our decay assumption on the energy. In fact we have 
\begin{proposition}
Let $u$ be a solution to (CP1) with a finite energy and satisfy
\[
 I = \int_{\Rm^3} |x|^{\kappa} \left(\frac{1}{2}|\nabla u_0|^2 + \frac{1}{2}|u_1|^2 + \frac{1}{p+1}|u_0|^{p+1}\right) dx < \infty.
\]
Then we have the function
\[
 I(t) = \int_{|x|>|t|} (|x|-|t|)^{\kappa} \left(\frac{1}{2}|\nabla u|^2 + \frac{1}{2}|u_t|^2 + \frac{1}{p+1}|w|^{p+1}\right) dx  \leq I, \quad t\in \Rm.
\]
\end{proposition}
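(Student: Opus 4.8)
The plan is to show that $I(t)$ is non-increasing in $|t|$ by differentiating it and using the equation, or equivalently to use the one-dimensional reduction $w = ru$ together with the inward/outward energy structure already developed. Since $(|x|-|t|)^\kappa$ vanishes on the light cone $|x|=|t|$ and the weight is supported in the exterior $\{|x|>|t|\}$, the boundary terms that would arise in a naive integration by parts should not contribute, and the sign of $\frac{d}{dt}I(t)$ should be controlled by the sign of $\kappa$ and the defocusing nonlinearity. Without loss of generality I would treat $t>0$ (the case $t<0$ being symmetric), so it suffices to prove $I'(t)\le 0$ for $t>0$, which combined with $I(0)=I$ gives $I(t)\le I$.

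Concretely, I would pass to the radial variable and write, using \eqref{energy transformation},
\[
 I(t) = 2\pi \int_{|t|}^\infty (r-|t|)^\kappa \left(\tfrac12 |w_r - \tfrac{w}{r}|^2 + \tfrac12 |w_t|^2 + \tfrac{1}{p+1}\tfrac{|w|^{p+1}}{r^{p-1}}\right) dr
\]
up to the harmless boundary correction coming from $r|u|^2$ (which is controlled since $u \in \dot H^1$ and the weight $(r-|t|)^\kappa$ is bounded on compacts). For $t>0$, differentiating in $t$ produces two kinds of terms: the term from differentiating the lower limit $r=t$, which vanishes because $(r-t)^\kappa \to 0$ there for $\kappa>0$; and the term $-\kappa\int (r-t)^{\kappa-1}(\cdots)\,dr$ from differentiating the weight, which is $\le 0$ since every factor in the energy density is non-negative; plus the term from differentiating the densities themselves. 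For this last piece I would use the equation $w_{tt}-w_{rr} = -|w|^{p-1}w/r^{p-1}$ and integrate by parts in $r$: the cross terms assemble into a total derivative $\partial_r$ of a flux, and integrating $(r-t)^\kappa \partial_r(\text{flux})$ by parts throws the $\kappa$-derivative of the weight onto the flux (another $(r-t)^{\kappa-1}$ term of a favorable sign after using Cauchy–Schwarz, namely $|w_r w_t| \le \frac12(w_r^2+w_t^2)$) and leaves a boundary term at $r=t$ that again vanishes because $\kappa>0$. The nonlinear contribution to $I'(t)$ is, after the same manipulation, proportional to $-\int (r-t)^{\kappa-1}\,\frac{|w|^{p+1}}{r^{p-1}}\,dr \le 0$ together with a term with the right sign from the defocusing character; here one must also check the $r^{-(p-1)}$ weight is harmless near $r=t>0$, which it is since $t$ is bounded away from $0$ (and the $t=0$ case is just the definition of $I$).

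The main obstacle I anticipate is \emph{justifying the differentiation under the integral sign and the integration by parts} at the regularity available ($\dot H^1 \times L^2$ data, finite energy): the integrand is only known to be in $L^1$, the weight $(r-t)^{\kappa-1}$ is singular at $r=t$ when $\kappa<1$, and the flux boundary terms at $r=t$ need to be shown to vanish rather than merely being formally zero. I would handle this by a standard smooth-approximation/truncation argument — first prove the monotonicity for smooth, compactly supported (away from the origin) solutions where all manipulations are legitimate, using the finite speed of propagation to control behavior near the cone, and then pass to the limit using the continuity $(u,u_t)\in C(\Rm_t;\dot H^1\times L^2)$, the Sobolev embedding $\dot H^1 \hookrightarrow L^{p+1}$ on balls, and the pointwise bound $|w(r,t)|\lesssim_E r^{1/2}$ from Lemma~\ref{best upper bound pointwise} and the preceding lemma to dominate the weighted nonlinear term. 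Once the monotonicity $\frac{d}{d|t|}I(t)\le 0$ is established, the conclusion $I(t)\le I(0)=I$ is immediate.
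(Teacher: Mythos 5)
Your proposal is correct and follows essentially the same route as the paper: reduce to $t>0$ by time reversibility and show $I'(t)\le 0$, with the boundary terms killed because the weight $(|x|-|t|)^\kappa$ vanishes on the cone and the remaining $(\,\cdot\,)^{\kappa-1}$ terms having a favorable sign after Cauchy--Schwarz (the paper simply cites \cite{pushmorawetz} for this computation). The only cosmetic difference is that you carry out the calculation in the radial variable $w=ru$, whereas the computation is slightly cleaner directly in $\Rm^3$ using $\partial_t e=\nabla\cdot(u_t\nabla u)$; this does not affect the validity of your argument.
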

\begin{proof}
 Since the wave equation is time-invertible, it suffices to prove this inequality for $t>0$. A basic calculation then shows $I'(t)\leq 0$ for $t>0$. Please see \cite{pushmorawetz} for more details. 
\end{proof}

\paragraph{Escaping Energy} Given any $t \in (-R,R)$, we have 
\begin{align*}
 & \int_{|x|>R}\left(\frac{1}{2}|\nabla u|^2+\frac{1}{2}|u_t|^2+\frac{1}{p+1}|u|^{p+1}\right) dx\\
 & \qquad \leq (R-|t|)^{-\kappa} \int_{|x|>R} (|x|-|t|)^{\kappa} \left(\frac{1}{2}|\nabla u|^2 + \frac{1}{2}|u_t|^2 + \frac{1}{p+1}|w|^{p+1}\right) dx\\
 & \qquad \leq (R-|t|)^{-\kappa} I(t) \leq (R-|t|)^{-\kappa} I.
\end{align*}
\noindent We integrate $t$ from $-R$ to $R$ and obtain
\begin{equation} \label{final upper bound}
 \int_{-R}^R \int_{|x|>R} \left(\frac{1}{2}|\nabla u|^2+\frac{1}{2}|u_t|^2+\frac{1}{p+1}|u|^{p+1}\right) dx dt \leq \frac{2}{1-\kappa} R^{1-\kappa} I.
\end{equation}
This is an upper bound of the right hand side of the inequality in Proposition \ref{energy distribution by morawetz}. 

\subsection{Completion of the Proof}
Plugging both the lower bound \eqref{additional contribution} and the upper bound \eqref{final upper bound} in the inequality given by Proposition \ref{energy distribution by morawetz}, we obtain the following inequality for any given $\beta<\frac{2(p-2)}{p+1}$ and sufficiently large $R>R(u,\beta)$.
\[
   \frac{E-\tilde{E}_-}{2}\cdot R^\beta \leq \frac{2}{1-\kappa} R^{1-\kappa} I.
\]
If $\kappa>\kappa_0(p) = 1-\frac{2(p-2)}{p+1}$, we can always choose $\beta<\frac{2(p-2)}{p+1}$ so that $\beta>1-\kappa$. This immediately gives a contradiction for sufficiently large $R$'s thus finishes the proof.

\section{Appendix}
In this final section we prove the measure $\mu$ in the energy flux formula satisfies $d\mu = |\xi(t)|^2 dt$. Here $\xi(t)$ satisfies $\pi \|\xi(t)\|_{L^2(\Rm)}^2 \leq E$. Therefore we can substitution $\pi \int_{t_1}^{t_2} 1d\mu(t)$ by $\pi \int_{t_1}^{t_2} |\xi(t)|^2 dt$ in the energy flux formula. It suffices to show the function $P(t) = \mu((\infty,t])$ is an absolutely continuous function. The idea is to find a relation between the measure $\mu$ and the function $g_+(\tau) \in L^2$ introduced in Proposition \ref{limits of wr pm wt}. In fact $\mu$ and $g_+(\tau)$ are limits of $w_r(r,r+\tau)-w_t(r,r+\tau)$ as $r\rightarrow 0^+$ and $r\rightarrow +\infty$, respectively. We start by 
\begin{lemma} \label{upper bound L2}
 Given any interval $I = (\tau_1,\tau_2)$, we always have
 \[
   \mu(I) \leq 2 \int_{\tau_1}^{\tau_2} |g_+(\tau)|^2 d \tau + 2 \int_{\tau_1}^{\tau_2} \left(\int_\tau^\infty \frac{|w(t-\tau,t)|^p}{(t-\tau)^{p-1}} dt\right)^2 d\tau.
 \]
\end{lemma}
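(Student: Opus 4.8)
The plan is to realize both $\mu(I)$ and the right-hand side as limiting values along the characteristic family $t - r = \tau$, and then pass through a fixed finite radius $r_0$ to compare them. Recall that for fixed $\tau$ the quantity $(w_r - w_t)(r, r+\tau)$ tends to $\lim_{r\to 0^+} = $ (something controlled by $d\mu$) and to $g_+(\tau)$ as $r \to +\infty$ (Proposition \ref{limits of wr pm wt}); moreover Proposition \ref{var of w partial} gives the exact variation of $(w_r - w_t)(r, r+\tau)$ in $r$ (equivalently in $t = r + \tau$) as $+\int \frac{|w|^{p-1}w(t-\tau,t)}{(t-\tau)^{p-1}}\,dt$. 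So for any $r_0 > 0$ and a.e. $\tau$,
\[
 \bigl|(w_r - w_t)(r_0, r_0 + \tau)\bigr| \le |g_+(\tau)| + \int_{r_0+\tau}^\infty \frac{|w(t-\tau,t)|^p}{(t-\tau)^{p-1}}\,dt \le |g_+(\tau)| + \int_{\tau}^\infty \frac{|w(t-\tau,t)|^p}{(t-\tau)^{p-1}}\,dt.
\]
First I would integrate the square of this pointwise bound over $\tau \in I$; using $(a+b)^2 \le 2a^2 + 2b^2$ this produces exactly the right-hand side of the claimed inequality, up to showing that $\int_I \bigl|(w_r - w_t)(r_0, r_0+\tau)\bigr|^2\,d\tau$ is bounded below by $\mu(I)$ in the limit $r_0 \to 0^+$.

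The second step is to identify $\mu(I)$ with $\lim_{r_0 \to 0^+} \int_I |(w_r - w_t)(r_0, r_0+\tau)|^2\,d\tau$. By the change of variables $t = r_0 + \tau$ this integral equals $\int_{\tau_1 + r_0}^{\tau_2 + r_0} |(w_r - w_t)(r_0, t)|^2\,dt$, i.e. the $L^2$ norm of $w_r - w_t$ along a short vertical segment at radius $r_0$. In the construction of $\mu$ in Proposition \ref{energy flux formula} (and its accompanying remarks), $P(t) = \mu((-\infty,t])$ was obtained precisely as $\lim_{r\to 0^+}\int_0^t |w_r(r,t') \mp w_t(r,t')|^2\,dt'$ — the estimate \eqref{decay of w line} lets one drop the $|w|^{p+1}/r^{p-1}$ term and shows the inward and outward limits coincide. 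Hence $\lim_{r_0\to 0^+}\int_{\tau_1+r_0}^{\tau_2+r_0} |(w_r-w_t)(r_0,t)|^2\,dt = P(\tau_2) - P(\tau_1) = \mu(I)$, using continuity of $P$ to absorb the $r_0$ shift in the endpoints. Combining with the displayed pointwise bound gives the lemma.

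The main obstacle is the interchange of the limit $r_0 \to 0^+$ with the integration over $\tau$: the pointwise bound on $(w_r - w_t)(r_0, r_0+\tau)$ holds for each fixed $\tau$, but to conclude $\mu(I) \le \liminf_{r_0\to 0^+}\int_I |(w_r-w_t)(r_0,r_0+\tau)|^2\,d\tau$ I need either Fatou's lemma (which requires knowing the integrand converges, or at least has the right liminf behaviour pointwise a.e.) or a uniform integrability argument. The cleanest route is to avoid taking a pointwise limit of $w_r - w_t$ altogether: instead fix $r_0$, apply the pointwise triangle inequality above inside the integral to get $\int_I |(w_r-w_t)(r_0,r_0+\tau)|^2\,d\tau \le 2\int_I |g_+(\tau)|^2\,d\tau + 2\int_I\bigl(\int_\tau^\infty \frac{|w(t-\tau,t)|^p}{(t-\tau)^{p-1}}\,dt\bigr)^2\,d\tau$ for every $r_0 > 0$, and only then send $r_0 \to 0^+$ on the left using the identification of the limit with $\mu(I)$ from Proposition \ref{energy flux formula}. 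This way the delicate limit is taken on a single scalar quantity whose limit is already known, and no dominated/monotone convergence over $\tau$ is needed.
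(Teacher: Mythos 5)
Your proposal is correct and follows essentially the same route as the paper's own proof: identify $\mu(I)$ as the $r'\to 0^+$ limit of $\int |w_r-w_t|^2$ along the segment near the $t$-axis via the outward flux formula, drop the nonlinear term using \eqref{decay of w line}, bound $|w_r-w_t|(r',\tau+r')$ pointwise by $|g_+(\tau)|$ plus the characteristic integral (Propositions \ref{var of w partial} and \ref{limits of wr pm wt}), then square with $(a+b)^2\le 2a^2+2b^2$ and pass to the limit only on the single scalar quantity. Your final remark about avoiding a dominated-convergence interchange by bounding at fixed $r'$ first and then letting $r'\to 0^+$ is exactly how the paper handles it.
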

\begin{proof}
 First of all, we apply energy flux formula of the outward energy on the region $\{(r,t): 0<r<r', \tau_1<t-r<\tau_2\}$, then let $r' \rightarrow 0^+$ and obtain
\[
  \pi \mu(I) = \pi \lim_{r'\rightarrow 0^+} \int_{\tau_1+r'}^{\tau_2+r'} \left(|w_r(r',t')-w_t(r',t')|^2 - \frac{2}{p+1}\cdot \frac{|w(r',t')|^{p+1}}{{r'}^{p-1}}\right) dt'.
\]
We recall the estimate \eqref{decay of w line}, apply change of variable $\tau = t'-r'$
\[
 \mu(I) = \lim_{r'\rightarrow 0^+} \int_{\tau_1+r'}^{\tau_2+r'} |w_r(r',t')-w_t(r',t')|^2 dt' = \lim_{r'\rightarrow 0^+} \int_{\tau_1}^{\tau_2} |w_r(r',\tau+r')-w_t(r',\tau+r')|^2 d\tau.
\]
By Proposition \ref{var of w partial} and Proposition \ref{limits of wr pm wt} we have the following upper bound
\[
 |w_r(r',\tau+r')-w_t(r',\tau+r')| \leq |g_+(\tau)| + \int_{\tau+r'}^\infty \frac{|w(t-\tau,t)|^p}{(t-\tau)^{p-1}} dt.
\]
We put this upper bound into the expression of $\mu(I)$ above, 
\begin{align*}
 \mu(I) & \leq \limsup_{r'\rightarrow 0^+} \left(2\int_{\tau_1}^{\tau_2} |g_+(\tau)|^2 d\tau + 2\int_{\tau_1}^{\tau_2} \left(\int_{\tau+r'}^\infty \frac{|w(t-\tau,t)|^p}{(t-\tau)^{p-1}} dt\right)^2 d\tau\right)\\
 & \leq 2\int_{\tau_1}^{\tau_2} |g_+(\tau)|^2 d\tau + 2\int_{\tau_1}^{\tau_2} \left(\int_{\tau}^\infty \frac{|w(t-\tau,t)|^p}{(t-\tau)^{p-1}} dt\right)^2 d\tau.
\end{align*}
\end{proof}
\noindent Let us have a look at the upper bound given by Lemma \ref{upper bound L2}. The first term is the integral of an integrable function over the same interval $I$. This will not make any double. Now let us deal with the second term. We first introduce a new notation
\begin{definition}
 Given $\tau\in \Rm$, we define 
 \[
  M(\tau) = \int_{\tau}^\infty \frac{|w(t-\tau,t)|^{p+1}}{(t-\tau)^p} dt.
 \]
\end{definition}
\noindent We claim $M \in L^1(\Rm)$. Because we can apply the change of variable $\tau = t-r$ and obtain
\[
 \int_{-\infty}^{\infty} M(\tau) d\tau = \int_{-\infty}^{\infty} \int_{\tau}^\infty \frac{|w(t-\tau,t)|^{p+1}}{(t-\tau)^p} dt d\tau = \int_{-\infty}^\infty \int_0^\infty \frac{|w(r,t)|^{p+1}}{r^p} dr dt \lesssim_p E. 
\]
\begin{lemma} \label{L2 bound connection}
For all $\tau \in \Rm$ we have
\[
 \int_{\tau}^\infty \frac{|w(t-\tau,t)|^p}{(t-\tau)^{p-1}} dt \lesssim_p Q_+^+(\tau)^{\frac{2}{p+1}}M(\tau)^\frac{p-2}{p+1}.
\]
\end{lemma}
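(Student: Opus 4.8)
The plan is to reduce the inequality to a one-variable statement along the characteristic $t-r=\tau$, and then to prove that statement by a \emph{split-and-optimize} (scaling) argument rather than a single application of Hölder's inequality, because the weight that naturally appears in a one-shot interpolation turns out to be $s^{-1}$, which is integrable at neither end of $(0,\infty)$.

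First I would change variables $s=t-\tau$ and set $f(s)=|w(s,s+\tau)|$, so that, up to the fixed constant $\tfrac{4\pi}{p+1}$, the quantity $Q_+^+(\tau)$ equals $A:=\int_0^\infty \frac{f(s)^{p+1}}{s^{p-1}}\,ds$, while $M(\tau)=\int_0^\infty \frac{f(s)^{p+1}}{s^{p}}\,ds$ and the left-hand side is $\int_0^\infty \frac{f(s)^{p}}{s^{p-1}}\,ds$. Since $Q_+^+(\tau)\le E$ by Lemma \ref{bounded of flux}, $A$ is finite; the degenerate cases $M(\tau)\in\{0,\infty\}$ are disposed of directly (if $M(\tau)=0$ then $f\equiv0$ along the characteristic and both sides vanish; if $M(\tau)=\infty$ the claimed bound is vacuous), so I assume $0<M(\tau)<\infty$.

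Next, for a parameter $R>0$ to be chosen I would split $\int_0^\infty=\int_0^R+\int_R^\infty$ and bound each piece by Hölder with the conjugate exponents $\tfrac{p+1}{p}$ and $p+1$. On $(0,R)$ I peel off $\bigl(\tfrac{f^{p+1}}{s^{p}}\bigr)^{p/(p+1)}$, leaving the residual weight $s^{1/(p+1)}$, which is integrable near $0$; this gives $\int_0^R \frac{f^{p}}{s^{p-1}}\,ds\lesssim_p M(\tau)^{p/(p+1)}R^{2/(p+1)}$. On $(R,\infty)$ I peel off $\bigl(\tfrac{f^{p+1}}{s^{p-1}}\bigr)^{p/(p+1)}$, leaving the residual weight $s^{-(p-1)/(p+1)}$, whose $(p+1)$-st power $s^{-(p-1)}$ is integrable at $+\infty$ precisely because $p>2$; this gives $\int_R^\infty \frac{f^{p}}{s^{p-1}}\,ds\lesssim_p A^{p/(p+1)}R^{-(p-2)/(p+1)}$. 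Summing the two estimates and optimizing in $R$, the two terms balance at $R=A/M(\tau)$, which yields exactly $\int_0^\infty \frac{f^{p}}{s^{p-1}}\,ds\lesssim_p A^{2/(p+1)}M(\tau)^{(p-2)/(p+1)}$; replacing $A$ by a constant multiple of $Q_+^+(\tau)$ finishes the proof.

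The point that must be handled with care, and the reason a naive argument fails, is that the inequality is invariant under the dilation $f(s)\mapsto f(\lambda s)$ (with $A$, $M(\tau)$ and the left-hand side all rescaling consistently), so any direct interpolation between the $A$-weight $s^{-(p-1)}$ and the $M$-weight $s^{-p}$ is forced onto the scale-invariant weight $s^{-1}$, which diverges at $0$ and at $\infty$. Splitting at the scale $R\sim A/M(\tau)$ dictated by the data is exactly what breaks this scale invariance, and the only genuinely delicate bookkeeping is checking that the leftover powers of $s$ fall on the integrable side at each end — $s^{1/(p+1)}$ near $0$ and $s^{-(p-1)}$ near $\infty$, the latter using $p\ge 3>2$. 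Everything else is a routine Hölder computation.
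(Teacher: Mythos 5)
Your proof is correct and follows essentially the same route as the paper: split the integral at a scale $T-\tau=R$, apply H\"older with exponents $\tfrac{p+1}{p}$ and $p+1$ on each piece (peeling off the $M(\tau)$-weight near the vertex and the $Q_+^+(\tau)$-weight in the tail, using $p>2$ for integrability at infinity), and choose $R\sim Q_+^+(\tau)/M(\tau)$ to balance the two terms. The change of variables, the explicit treatment of the degenerate cases $M(\tau)\in\{0,\infty\}$, and the scaling remark are harmless additions; the substance is identical to the paper's argument, which obtains the tail bound by citing the computation in Proposition \ref{pointwise estimate ws and wtau}.
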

\begin{proof}
We split the integral into two parts 
\[
 \int_{\tau}^\infty \frac{|w(t-\tau,t)|^p}{(t-\tau)^{p-1}} dt = \int_{\tau}^{T} \frac{|w(t-\tau,t)|^p}{(t-\tau)^{p-1}} dt + \int_{T}^\infty \frac{|w(t-\tau,t)|^p}{(t-\tau)^{p-1}} dt
\]
Following the same argument as in the proof of Proposition \ref{pointwise estimate ws and wtau}, we can find an upper bound of the second term above
\[
 \int_{T}^\infty \frac{|w(t-\tau,t)|^p}{(t-\tau)^{p-1}} dt \lesssim_p (T-\tau)^{-\frac{p-2}{p+1}}Q_+^+(\tau)^{\frac{p}{p+1}}.
\]
Next we find an upper bound of the first term using $M(\tau)$
\begin{align*}
 \int_{\tau}^{T} \frac{|w(t-\tau,t)|^p}{(t-\tau)^{p-1}} dt & \leq \left(\int_{\tau}^T \left(\frac{|w(t-\tau,t)|^{p}}{(t-\tau)^{\frac{p^2}{p+1}}}\right)^{\frac{p+1}{p}}dt\right)^{\frac{p}{p+1}} \left(\int_\tau^T \left((t-\tau)^{\frac{1}{p+1}}\right)^{p+1}dt \right)^{\frac{1}{p+1}}\\
 & \leq (T-\tau)^\frac{2}{p+1} M(\tau)^{\frac{p}{p+1}}.
\end{align*}
In summary, we have for any $T>\tau$
\[
  \int_{\tau}^\infty \frac{|w(t-\tau,t)|^p}{(t-\tau)^{p-1}} dt  \lesssim_p (T-\tau)^\frac{2}{p+1} M(\tau)^{\frac{p}{p+1}} +(T-\tau)^{-\frac{p-2}{p+1}}Q_+^+(\tau)^{\frac{p}{p+1}}
\]
Finally we choose $T=\tau+Q_+^+(\tau)/M(\tau)$ and finish the proof.
\end{proof}
\noindent Now we are able to prove
\begin{proposition}
 The function $P(t) = \mu((-\infty,t])$ is an absolutely continuous function. 
\end{proposition}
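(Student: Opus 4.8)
The plan is to show that the measure $\mu$ has a density in $L^1(\Rm)$ with respect to Lebesgue measure, which is equivalent to $P(t) = \mu((-\infty,t])$ being absolutely continuous. The starting point is to dominate $\mu$ on every bounded interval by the integral of a fixed locally integrable function. Combining Lemma \ref{upper bound L2} with Lemma \ref{L2 bound connection} and the uniform bound $Q_+^+(\tau) \le E$ from Lemma \ref{bounded of flux}, one obtains, for every bounded interval $I$,
\[
 \mu(I) \lesssim_{p,E} \int_I \Big( |g_+(\tau)|^2 + M(\tau)^{\frac{2(p-2)}{p+1}} \Big)\, d\tau =: \int_I h(\tau)\, d\tau ,
\]
where $h$ absorbs the implicit constant depending on $p$ and $E$.

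First I would verify that $h \in L^1_{\mathrm{loc}}(\Rm)$. The contribution of $|g_+|^2$ is even globally integrable by Proposition \ref{limits of wr pm wt}. For the other term, recall that $M \in L^1(\Rm)$ with $\|M\|_{L^1(\Rm)} \lesssim_p E$; since $3 \le p < 5$ the exponent $\theta := \frac{2(p-2)}{p+1}$ is strictly less than $1$, so Hölder's inequality gives $\int_I M^{\theta} \le |I|^{1-\theta}\|M\|_{L^1(\Rm)}^{\theta} < \infty$ for every bounded interval $I$. Hence the restriction of $h$ to any compact interval $[-N,N]$ lies in $L^1$.

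Next I would deduce absolute continuity of $P$ from the interval bound. For finitely many disjoint intervals $(a_i,b_i) \subseteq [-N,N]$ with $\sum_i (b_i-a_i)$ small, we have $\sum_i (P(b_i)-P(a_i)) = \sum_i \mu((a_i,b_i)) \le \int_{\bigcup_i (a_i,b_i)} h(\tau)\, d\tau$, which is small by the absolute continuity of the Lebesgue integral of the $L^1$ function $h|_{[-N,N]}$. Thus $P$ is absolutely continuous on each $[-N,N]$, so $f := P'$ exists a.e., is nonnegative, and satisfies $P(b)-P(a) = \int_a^b f$; letting $a \to -\infty$, $b\to +\infty$ and using $\pi\mu(\Rm) \le E$ from Lemma \ref{bounded of flux} gives $f \in L^1(\Rm)$ and $P(t) = \int_{-\infty}^t f(\tau)\, d\tau$. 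Setting $\xi(t) = \sqrt{f(t)}$ then yields $d\mu = |\xi(t)|^2\, dt$ with $\pi \|\xi\|_{L^2(\Rm)}^2 = \pi\mu(\Rm) \le E$, as claimed.

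The one point that really matters is that the exponent $\theta = \frac{2(p-2)}{p+1}$ is strictly below $1$, which is exactly where energy-subcriticality $p<5$ is used: it converts the $L^1$ bound on $M$ into the $L^1_{\mathrm{loc}}$ bound on $M^{\theta}$ that the argument needs (at $p=5$ this would be borderline and a different input would be required). Everything else — assembling the two lemmas with the flux bound, the Hölder step, and the standard passage from an interval domination to absolute continuity — is routine.
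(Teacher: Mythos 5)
Your proof is correct and follows essentially the same route as the paper: combine Lemma \ref{upper bound L2}, Lemma \ref{L2 bound connection} and the flux bound $Q_+^+(\tau)\leq E$ to dominate $\mu(I)$ by $\int_I\big(|g_+|^2+M^{2(p-2)/(p+1)}\big)d\tau$, then use $g_+\in L^2$, $M\in L^1$ and H\"older with exponent $\frac{2(p-2)}{p+1}<1$ to conclude absolute continuity. The only cosmetic difference is that you phrase the last step via local integrability of the dominating function, while the paper applies H\"older directly on the union of intervals to get the explicit bound $\lesssim_p \int_A|g_+|^2\,d\tau+E^{\frac{2p}{p+1}}\delta^{\frac{5-p}{p+1}}$; both are equivalent.
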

\begin{proof}
Given any interval $I=(\tau_1,\tau_2)$, we combine Lemma \ref{upper bound L2} and Lemma \ref{L2 bound connection} to obtain
\[
 \mu(I) \lesssim_p \int_{\tau_1}^{\tau_2} |g_+(\tau)|^2 d\tau + E^\frac{4}{p+1} \int_{\tau_1}^{\tau_2} M(\tau)^{\frac{2(p-2)}{p+1}} d\tau.
\]
As a result, if $\displaystyle A = \bigcup_{k=1}^n (a_k,b_k)$ is a union of finite many intervals so that $\displaystyle \sum_{k=1}^n (b_k-a_k)<\delta$, then we have
\begin{align*}
 \sum_{k=1}^n |P(b_k)-P(a_k)| = \mu(A) & \lesssim_p \int_A |g_+(\tau)|^2 d\tau + E^\frac{4}{p+1} \int_A M(\tau)^{\frac{2(p-2)}{p+1}} d\tau\\
 & \lesssim_p \int_A |g_+(\tau)|^2 d\tau + E^\frac{2p}{p+1}\delta^{\frac{5-p}{p+1}}.
\end{align*}
The right hand side above converges to zero, as long as $\delta \rightarrow 0^+$, regardless of the choice of $n$, $a_k$ and $b_k$'s. Therefore $P(t)$ is absolutely continuous by definition. 
\end{proof}
\paragraph{The $L^2$ function $\xi(t)$} Now for any interval $I = (t_1,t_2)$ we are able to write 
\[
 \mu(I) = P(t_2) -P(t_1) = \int_{t_1}^{t_2} P'(t) dt\quad \Rightarrow \quad d\mu(t) = P'(t) dt.
\]
Here $P'(t)$ is a nonnegative, locally integrable function. In addition, the finiteness of $\mu$ implies that $P'(t) \in L^1(\Rm)$. Finally we only need to rewrite $P'(t) = |\xi(t)|^2$, with $\xi(t) \in L^2(\Rm)$ and finish the proof.

\end{document}